\documentclass[a4paper,11pt
]{article}
\usepackage[normalem]{ulem}

\usepackage[utf8]{inputenc}
\usepackage[UKenglish]{babel}
\usepackage[T1]{fontenc}
\usepackage{amsmath,amsfonts,amssymb,amsthm}
\usepackage{graphicx}
\usepackage[top=2cm,bottom=3cm,left=2.5cm,right=2.5cm]{geometry}
\usepackage{setspace}\setstretch{1.15} 
\usepackage{xcolor}
\usepackage{stmaryrd}
\usepackage{enumitem}
\usepackage{esint}
\usepackage{centernot}
\usepackage[format=hang,singlelinecheck=false]{caption}

\usepackage[colorlinks=true]{hyperref}

\title{\textsc{Geodesics in planar Poisson roads random metric}}

\author{Guillaume Blanc\thanks{Université Paris-Saclay \hfill \href{mailto:guillaume.blanc1@universite-paris-saclay.fr}{guillaume.blanc1@universite-paris-saclay.fr}} \and Nicolas Curien\thanks{Université Paris-Saclay \hfill \href{mailto:nicolas.curien@gmail.com}{nicolas.curien@gmail.com}} \and Jonas Kahn \thanks{University of Electronic Science and Technology of China \hfill \href{mailto:jokahn@phare.normalesup.org}{jokahn@phare.normalesup.org}}}

\begin{document}

\maketitle

\theoremstyle{plain}
\newtheorem{prop}{Proposition}
\newtheorem{thm}{Theorem}
\newtheorem{open}{Open problem}
\newtheorem{lem}{Lemma}
\newtheorem{claim}{Claim}

\theoremstyle{remark}
\newtheorem{rem}{Remark}

\begin{abstract}
We study the structure of geodesics in the fractal random metric constructed by Kendall from a self-similar Poisson process of roads (i.e, lines with speed limits) in $\mathbb{R}^2$. 
In particular, we prove a conjecture of Kendall stating that geodesics do not pause en route, i.e, use roads of arbitrary small speed except at their endpoints.
It follows that the geodesic frame of $\left(\mathbb{R}^2,T\right)$ is the set of points on roads.
We also consider geodesic stars and hubs, and give a complete description of the local structure of geodesics around points on roads. 
Notably, we prove that leaving a road by driving off-road is never geodesic.
\end{abstract}

\begin{figure}[!h]
\centering
\begin{tabular}{lr}
\includegraphics[width=0.4\linewidth]{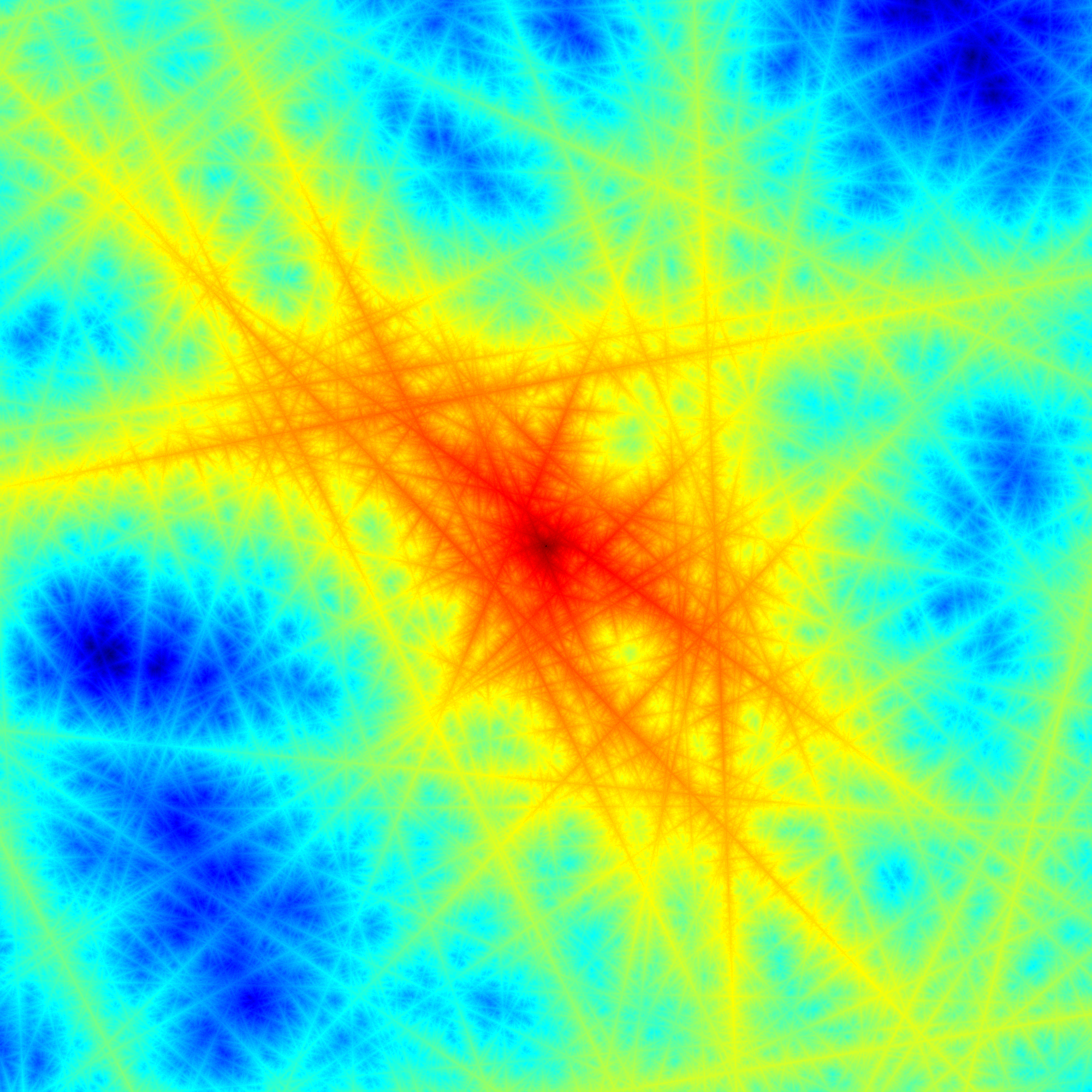}&\includegraphics[width=0.4\linewidth]{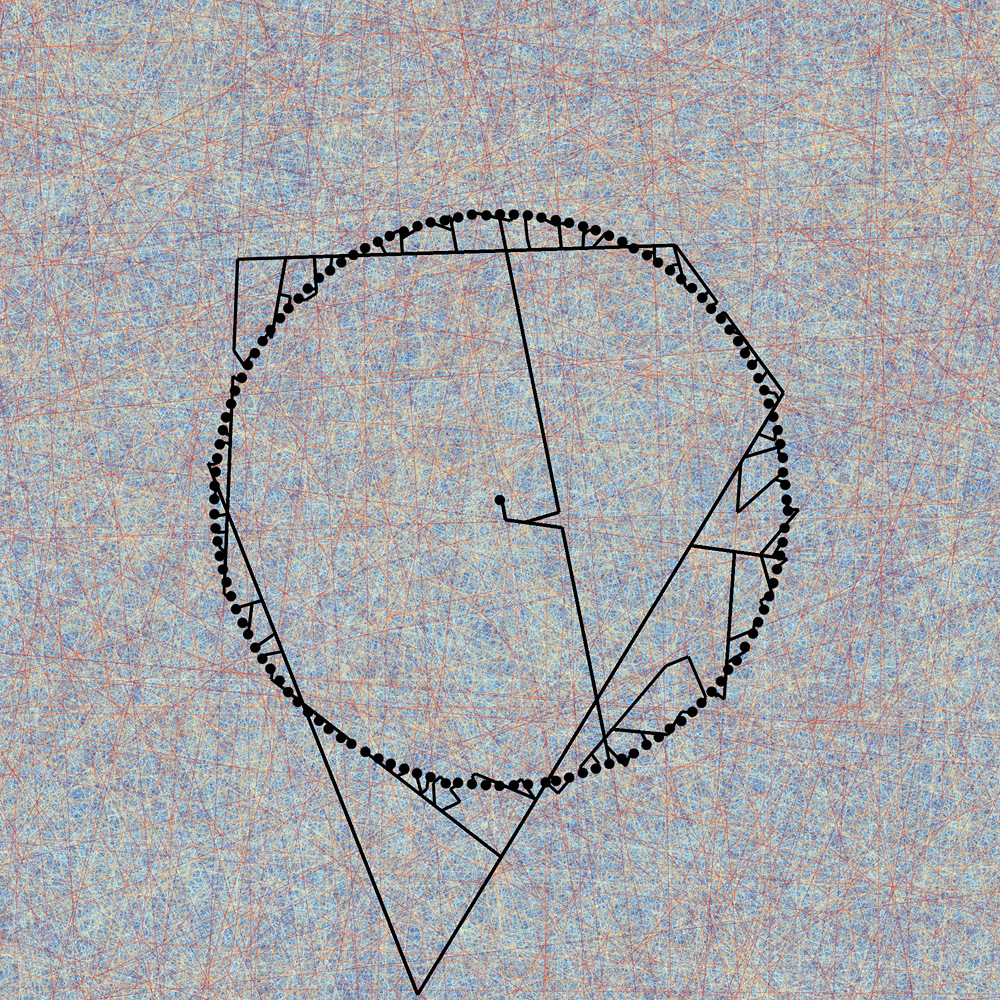}
\end{tabular}
\caption{\emph{Left:} Illustration of the fractal random metric constructed by Kendall from a self-similar Poisson process of roads (i.e, lines with speed limits) in $\mathbb{R}^2$. 
The balls of increasing radii around the origin are displayed by varying colors from blue to red.\\
\emph{Right:} The geodesic network between several points on the unit sphere and the origin.\\
\emph{Credits:} Arvind Singh.}
\end{figure}

\newpage

\section*{Introduction and main results}

\paragraph{Geodesics at the core of two-dimensional random geometry.} 
The study of geodesics has arguably become one of the most important topics in the theory of planar random metrics. 
Indeed, the pioneer  analysis of fine properties of geodesics in large random planar maps and their scaling limit the Brownian sphere was the central ingredient in the breakthrough of Le Gall \cite{legalluniqueness} and Miermont \cite{miermontscalingmap} establishing the uniqueness of the Brownian sphere. 
The strong confluence properties of geodesics also played a crucial role in the definition of the Liouville quantum gravity metric, another major success of the last decade \cite{gwynnemilleruniqueness,dingdubedatgwynneintroduction}. 
Since then, a systematic study of geodesic networks has emerged, see \cite{gwynnenetworks,dinggwynnesupercritical,gwynnemillerconfluence} in Liouville quantum gravity, \cite{angelkolesnikmiermont,millerqian,legallstars,legallgeodesics} in Brownian geometry, and \cite{bhatiaatypical,dauvergne27} in the directed landscape, a sort of ``signed directed metric'' describing the KPZ fixed point of growing interfaces \cite{dauvergneortmannviragdirected}.

\paragraph{Kendall's Poisson roads random metric.} 
In this paper, we consider the fractal random metric constructed by Kendall \cite{kendall} from a self-similar Poisson process of roads in $\mathbb{R}^2$, following a suggestion of Aldous  \cite{aldousscale} in his general investigation of scale-invariant random spatial networks (\textsc{sirsn}). 
Let us briefly present the construction:
Recall that there exists a unique, up to multiplicative constant, locally finite Borel measure $\mu$ on the space $ \mathbb{L}$ of affine lines in $ \mathbb{R}^2$ that is invariant under rotations and translations {(this measure is sometimes called the ``kinematic measure'' and is used e.g.~in classical proofs of the Cauchy--Crofton formula \cite{santalo2004integral})}.
Then, consider a Poisson process $\Pi$ with intensity proportional to $\mu\otimes v^{-\beta}\mathrm{d}v$ on $\mathbb{L}\times\mathbb{R}_+^*$, where $\beta>2$ is a parameter of the model.
Viewing each atom $(\ell,v)$ of $\Pi$ as a \textbf{road} in $\mathbb{R}^2$, with $v$ the \textbf{speed limit} on the line $\ell$, picture driving on the random road network generated by $\Pi$: this induces a random metric $T$ on $\mathbb{R}^2$, for which the distance $T(x,y)$ between points $x,y\in\mathbb{R}^2$ is given by the infimal driving time of a path that respects the speed limits from $x$ to $y$.
Since the set of points on roads, namely $\mathcal{L}=\bigcup_{(\ell,v)\in\Pi}\ell$, is almost surely of zero Lebesgue measure as a countable union of lines, this is a non-trivial result, which is due to Kendall \cite{kendall}.
In fact, the construction of Kendall holds in $\mathbb{R}^d$ for any $d\geq2$, although in dimension $d\geq3$, almost surely, the roads of $\Pi$ do not intersect.
The construction requires $\beta>d$, and produces a random metric $T$ on $\mathbb{R}^d$ with a nice self-similarity property.
Moreover, almost surely, the metric space $\left(\mathbb{R}^d,T\right)$ is homeomorphic to the usual Euclidean $\mathbb{R}^d$, and has Hausdorff dimension
\[\dim_H\left(\mathbb{R}^d,T\right)=\frac{(\beta-1)d}{\beta-d}>d,\]
as shown by the first author \cite{moa1fractal}.
See Section \ref{sec:rappelsgeo} for a more detailed presentation of the model.
Our main results in this paper concern the planar case $d=2$.

\paragraph{Geodesics do not pause en route.}
Almost surely, the metric space $\left(\mathbb{R}^d,T\right)$ is a geodesic space, in which geodesics in the usual ``metric geometry'' sense correspond to paths that respect the speed limits with minimal driving time.
Some fundamental properties of geodesics in $\left(\mathbb{R}^d,T\right)$ were already obtained by Kendall \cite{kendall}, and by the third author \cite{kahn}.
In particular, as for many other random metrics, the geodesic in $\left(\mathbb{R}^d,T\right)$ between two fixed points, for instance $0$ and ${e_1=(1,0,\ldots,0)}$, is almost surely unique (see \cite[Theorem 4.4]{kendall} for $d=2$ and \cite[Theorem 4.8]{kahn} for $d\geq3$).
Now, a specificity of the Poisson roads random metric $T$ is that it comes with an underlying ``speed limits'' function $V:\mathbb{R}^d\rightarrow\mathbb{R}_+$, which formally is defined by
\[V(x)=\sup\{v\,;\,(\ell,v)\in\Pi:\text{$\ell$ passes through $x$}\}\quad\text{for all $x\in\mathbb{R}^d$,}\]
with the convention $\sup\emptyset=0$.
Of course, since almost surely neither $0$ nor $e_{1}$ belong to $\mathcal{L}$, the geodesic from $0$ to $e_1$ must use roads with arbitrarily small speed at its endpoints, as illustrated in Figure \ref{fig:pauseintro}.
However, it was conjectured by Kendall \cite[Section 7]{kendall} that in the planar case $d=2$, almost surely, the speed of geodesics is bounded away from zero away from their endpoints, which he popularized under the aphorism ``\textit{geodesics do not pause en route}''. 
We confirm this conjecture.

\begin{thm}[Geodesics do not pause en route]\label{thm:pause}
Almost surely, for any geodesic ${\gamma:[0,\tau]\rightarrow\mathbb{R}^2}$, the following holds: for every $t_1<t_2\in{]0,\tau[}$, there exists $\varepsilon>0$ such that $V(\gamma(t))\geq\varepsilon$ for all $t\in[t_1,t_2]$.
\end{thm}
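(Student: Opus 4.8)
The plan is to argue by contradiction: suppose with positive probability there is a geodesic $\gamma$ and times $t_1 < t_2$ in the open interval such that the speed $V(\gamma(t))$ is not bounded away from zero on $[t_1,t_2]$. Then along $\gamma$ there is a sequence of times $s_n \in [t_1,t_2]$ with $V(\gamma(s_n)) \to 0$; passing to a subsequence we may assume $\gamma(s_n) \to p$ for some point $p$ on the geodesic, strictly between its endpoints, and by lower semicontinuity-type considerations (or rather, because $V$ is an ``upper'' quantity) one wants to say $V(p) = 0$, i.e. $p \notin \mathcal{L}$, and moreover that $\gamma$ approaches $p$ using roads of vanishing speed. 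The heart of the matter is then a \emph{local} statement: near a point $p \notin \mathcal{L}$, no geodesic can ``linger'' — more precisely, any geodesic segment that enters a small ball $B(p,r)$ and uses only roads of speed $\le \varepsilon$ inside it must have length (driving time) bounded below by something like $r/\varepsilon$ times a constant, whereas a competitor path that dips down to the origin-scale structure and back can cross $B(p,r)$ in time $\ll r/\varepsilon$ by using the ambient road network at the appropriate scale. The contradiction comes from comparing these.

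Concretely, the key steps, in order, are: \textbf{(1)} Reduce to the event that $\gamma$ passes through a fixed (or localized) point $p$ with $V(p)=0$ and uses only arbitrarily-slow roads in a neighbourhood of $p$; this uses the uniqueness and a countable-covering argument over rational boxes and rational thresholds. \textbf{(2)} Exploit the self-similarity of $\Pi$ (the scaling property of $T$ recalled in Section \ref{sec:rappelsgeo}) to zoom in at $p$: after rescaling, the configuration near $p$ looks like the configuration near a typical point at macroscopic scale, so it suffices to show that a geodesic cannot spend all its time near scale $\rho$ at speed below $\varepsilon(\rho) \to 0$ as $\rho \to 0$. \textbf{(3)} The crucial estimate: produce, inside $B(p,\rho)$, a ``fast road of scale $\rho$'' — with high probability there is a road of speed $\gtrsim \rho^{(\beta-2)/\cdots}$ (the exponent dictated by the intensity $\mu \otimes v^{-\beta}\,\mathrm{d}v$ and the self-similarity) passing within distance $o(\rho)$ of $p$, together with a bound on the $T$-cost of reaching it — this is essentially the same kind of multiscale road-construction used by Kendall to prove $T < \infty$ and by the third author in \cite{kahn}. \textbf{(4)} Combine: if $\gamma$ used only speed-$\le\varepsilon$ roads throughout $B(p,\rho)$ with $\varepsilon$ arbitrarily small relative to the scale-$\rho$ fast road's speed, then rerouting $\gamma$ through that fast road strictly shortens it, contradicting that $\gamma$ is a geodesic.

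The main obstacle, I expect, is step \textbf{(3)} made quantitative and \emph{uniform} enough: one must control not merely the existence of a fast-ish road near $p$ but the driving-time cost of connecting the incoming and outgoing pieces of $\gamma$ to it, and this connection cost must beat the time $\gamma$ would otherwise spend. Because $\gamma$ enters $B(p,\rho)$ at speed $\le\varepsilon$ but we do not know $\varepsilon$ a priori — it is the geodesic's own speed that is being squeezed to zero — the argument has to be set up as: for \emph{every} $\varepsilon$, on a scale $\rho = \rho(\varepsilon)$ chosen small enough, there is a cheaper reroute; equivalently, the ``pausing'' hypothesis forces infinitely many scales at which a reroute exists, and a Borel--Cantelli / second-moment input (again powered by self-similarity and independence across well-separated scales, as in \cite{moa1fractal,kahn}) guarantees at least one of them actually does shorten $\gamma$. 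Getting the scaling exponents to line up so that ``slow enough to be a pause'' and ``scale small enough for a reroute'' are compatible for \emph{all} geodesics simultaneously (not just the fixed $0$-to-$e_1$ one) is where the real work lies, and presumably relies on a net argument over pairs of rational points combined with the a.s. uniqueness of geodesics between fixed endpoints.
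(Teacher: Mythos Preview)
Your proposal takes a fundamentally different route from the paper, and I believe it has a genuine gap that is hard to close.

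The core problem is a circularity in your step~(4). You want to reroute $\gamma$ through a nearby fast road and obtain a strictly shorter path. But $\gamma$ is already a geodesic: if the reroute were shorter, $\gamma$ would have taken it. Put differently, the H\"older upper bound on $T$ that powers your step~(3) is satisfied by every geodesic, including at its endpoints, where geodesics \emph{do} use arbitrarily slow roads. The upper bound on $T$ simply does not distinguish pausing from non-pausing behaviour. So ``there is a fast road nearby at scale $\rho$'' yields no contradiction: either the geodesic already uses it, or using it does not help. Your step~(2) compounds this: the putative pause point $p$ is a \emph{random} point selected by the geodesic, hence by the whole configuration $\Pi$; self-similarity and Borel--Cantelli across scales apply to deterministic (or independently sampled) centres, not to $p$. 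A net over rational endpoints does not rescue this, since even for fixed $x,y$ the pause point along $\gamma_{x,y}$ remains random and configuration-dependent.

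The paper avoids this trap by a different idea altogether. It introduces the auxiliary metric $T_\varepsilon$ built from $\varepsilon\vee V$ (one may drive at speed $\varepsilon$ anywhere, in addition to using the roads). The key structural result, Proposition~\ref{prop:epsilongeo2}, is that an $(\varepsilon\vee V)$-geodesic uses the ``free $\varepsilon$-motion'' \emph{only} on two segments at its endpoints; in between it is a simple $V$-path (a finite concatenation of road segments). This comes from an elementary deterministic first-order computation (Lemma~\ref{lem:elementarydet}: at an optimal road-to-off-road transition the angle satisfies $|\langle e,\ell\rangle|=\varepsilon/v$) combined with a measure-zero argument on $\Pi$ (Lemma~\ref{lem:epsilongeo2lem2}: almost surely no two roads can satisfy this angle condition for a common direction $e$), so two $\varepsilon$-segments can never be consecutive in the interior. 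One then sends $\varepsilon\downarrow 0$ along a sequence and uses confluence of geodesics (Proposition~\ref{prop:confluence}) to pin both the $(\varepsilon\vee V)$-geodesic and the $V$-geodesic from $x$ to $y$ through common cut points $x',y'$ near the endpoints; uniqueness forces them to coincide between $x'$ and $y'$, so the $V$-geodesic is a simple $V$-path there. Finally, Lemma~\ref{lem:approxgeotyp} upgrades this from typical endpoints to all geodesics. The point is that the $(\varepsilon\vee V)$-geodesic is a \emph{different} optimisation problem with a finite combinatorial structure that can be analysed directly; one never tries to shorten the $V$-geodesic itself.
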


\begin{figure}[!h]
\begin{center}
\includegraphics[width=12cm]{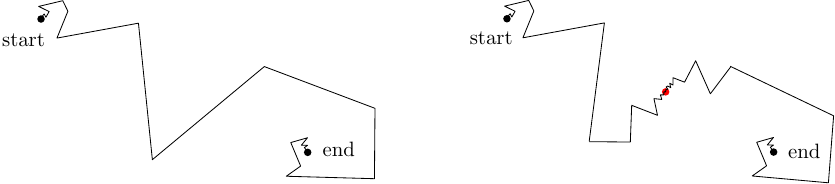}
\caption{Illustration of a geodesic that does not pause en route \emph{(left)}, and of one which pauses en route at the red point \emph{(right)}. 
Together with the results of \cite{kendall}, our Theorem \ref{thm:pause} shows that in dimension $2$, geodesics are made up of consecutive sequences of line segments, which possibly accumulate at their endpoints but not elsewhere \emph{(left)}. \label{fig:pauseintro}}
\end{center}
\end{figure}

Perhaps surprisingly, the only probabilistic inputs in our proof are confluence of geodesics properties: the rest of the argument relies on an elementary deterministic lemma.
See Section \ref{sec:pause}.
Incidentally, this provides another proof of the uniqueness of geodesics between typical points, in the planar case $d=2$ (see Remark \ref{rem:newuniqueness} below).

\paragraph{Confluence of geodesics and geodesic stars.} 
As for many other random metrics, we have some confluence of geodesics properties in $\left(\mathbb{R}^d,T\right)$, some of which were already obtained by Kendall \cite[Section 6]{kendall}, and by the third author \cite{kahn}. 
Informally, geodesics tend to use the same routes, which leads to them clumping together. 
One way to quantify this phenomenon is via the study of geodesic stars. 
Recall that in a geodesic space, a point $x$ is a geodesic star with at least $k$ arms, or \textbf{$k^+$-star} for short, if there exists $k$ disjoint geodesics emanating from $x$. 
We say that $x$ is a  \textbf{$k$-star} if it is a $k^+$-star but not a $(k+1)^+$-star. 
In particular, every point in a non-trivial geodesic space is a $1^+$-star, but being a $1$-star entails a local confluence of geodesics property.
Following up on the arguments of Kendall \cite{kendall}, it can be proved that almost surely, the point $0$ is a $1$-star in $\left(\mathbb{R}^d,T\right)$, in general dimension $d\geq2$.
Together with the uniqueness of geodesics between typical points, this entails that almost surely, for every $y_1,y_2\in\left.\mathbb{R}^d\middle\backslash\{0\}\right.$, and for any geodesics $\gamma_1:[0,T(0,y_1)]\rightarrow\mathbb{R}^d$ from $0$ to $y_1$ and $\gamma_2:[0,T(0,y_2)]\rightarrow\mathbb{R}^d$ from $0$ to $y_2$, there exists $t_0>0$ such that $\gamma_1(t)=\gamma_2(t)$ for all $t\in{[0,t_0]}$.
This was already obtained by Kendall \cite{kendall} in the planar case $d=2$.
We present this and other confluence results in Section \ref{sec:confluence}.
See Proposition \ref{prop:confluence} therein for a summary.
On the geodesic stars, we also prove that there exists a deterministic integer $m\in\mathbb{N}^*$ such that almost surely, there are no $(m+1)^{+}$-stars in $\left(\mathbb{R}^{d},T\right)$.
See Proposition \ref{prop:K+stars} below.
The maximal number of arms of a geodesic star {(i.e, the smallest integer $m$ for which there are no $(m+1)^+$-stars)} is known to be $4$ in the directed landscape \cite{dauvergneortmannviragdirected}, and remains to be arbitrated between $4$ and $5$ in Brownian geometry \cite{millerqian,legallstars}.

\paragraph{Geodesic frame and geodesic hubs.}
By definition, the \textbf{geodesic frame} of $\left(\mathbb{R}^d,T\right)$ is the union of all the geodesics minus their endpoints.
In fact, due to the confluence of geodesics, almost surely, this union can be reduced to a union over geodesics between countably many typical points (see Proposition \ref{prop:approxgeotyp} below). 
In particular, almost surely, the geodesic frame of $\left(\mathbb{R}^d,T\right)$ has Hausdorff dimension $1$, the dimension of a single geodesic. The same phenomenon appears in Brownian geometry \cite[Theorem 1.7 and Corollary 1.8]{millerqian}, and in the directed landscape \cite[Lemma 3.3]{dauvergne27}. 
A striking difference with our setting however is that in our case, geodesics are automatically Lipschitz continuous with respect to the Euclidean metric, whereas they are fractal objects in Liouville quantum gravity, as shown in \cite{fangoswamiroughness}.
Now, for the rest of this discussion, we consider the planar case $d=2$.
In our second main result (Theorem \ref{thm:frame} below), we identify the geodesic frame of $\left(\mathbb{R}^2,T\right)$ as the set of points on roads $\mathcal{L}$, and we give a complete description of the local structure of geodesics around points on roads.
Before stating this result, let us introduce the notion of \textbf{geodesic hub}, which comes up naturally in this context.
We say that a point $x$ in a geodesic space $X$ is a geodesic hub with at least $k$ arms, or \textbf{$k^+$-hub} for short, if there exists $k$ geodesics $\gamma_i:[0,\tau_i]\rightarrow X$ with $\gamma_i(0)=x$ and $\gamma_i{]0,\tau_i]}\cap\gamma_j{]0,\tau_j]}=\emptyset$ for every $i\neq j$, such that for each $i\neq j$, following $\gamma_i$ from $\gamma_i(\tau_i)$ to $x$ and then $\gamma_j$ from $x$ to $\gamma_j(\tau_j)$ yields a geodesic.
See Figure \ref{fig:hubsintro} below for an illustration.

\begin{figure}[!h]
\begin{center}
\includegraphics[width=7cm]{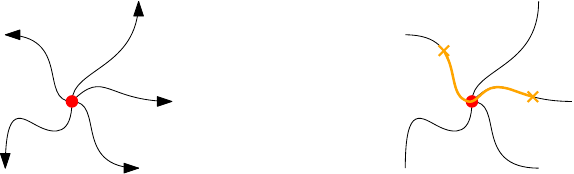}
\caption{\emph{Left:} A $5^+$-star, i.e, a point from which (at least) $5$ disjoint geodesics emanate.\\ 
\emph{Right:} This $5^+$-star is furthermore a $5^+$-hub if the concatenation of any two of these $5$ geodesics remains a geodesic.  
\label{fig:hubsintro}}
\end{center}
\end{figure}

In particular, the set of $2^+$-hubs corresponds to the geodesic frame.
Note that a $k^+$-hub is a $k^+$-star, but being a hub is much stronger than being a star.
For instance, in the Brownian sphere, it is known that the set of $3^+$-stars has Hausdorff dimension $2$ \cite{millerqian,legallstars}, while we believe that there are no $3^+$-hubs, nor in \textsc{Lqg} random surfaces.
In contrast with the case of Brownian geometry, we prove the following result, which gives a complete characterisation of geodesic hubs in $\left(\mathbb{R}^2,T\right)$.
We denote by ${\mathcal{I}=\bigcup_{(\ell_1,v_1)\neq(\ell_2,v_2)\in\Pi}\ell_1\cap\ell_2}$ the set of intersections of two roads of $\Pi$.
Almost surely, it is a countable union of points, and there is no intersection of three roads of $\Pi$.

\begin{thm}[On the geodesic frame and hubs]\label{thm:frame}
Almost surely, the geodesic frame of $\left(\mathbb{R}^2,T\right)$ is the set of points on roads $\mathcal{L}$.
Furthermore, for every $x\in\mathcal{L}$, the following holds:
\begin{itemize}
\item if $x\notin\mathcal{I}$, then $x$ is a $2^+$-hub but not a $3^+$-star,
\item if $x\in\mathcal{I}$, then $x$ is a $4^+$-hub but not a $5^+$-star.
\end{itemize}
In particular, if we denote by $\mathcal{H}_k^+$ the set of $k^+$-hubs in $\left(\mathbb{R}^2,T\right)$, then we have
\[\mathcal{H}_1^+=\mathbb{R}^d,\quad\mathcal{H}_2^+=\mathcal{L},\quad\mathcal{H}_3^+,\mathcal{H}_4^+=\mathcal{I},\quad\text{and}\quad\mathcal{H}_5^+=\emptyset.\]
\end{thm}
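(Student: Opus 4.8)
The plan is to establish Theorem~\ref{thm:frame} in several stages, leveraging the confluence results summarised in Proposition~\ref{prop:confluence}, the reduction of the geodesic frame to geodesics between typical points (Proposition~\ref{prop:approxgeotyp}), and — crucially — Theorem~\ref{thm:pause}, which guarantees that every geodesic, away from its endpoints, travels along a locally finite concatenation of road segments whose speeds are bounded below on compact subintervals.

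\textbf{Step 1: the geodesic frame is contained in $\mathcal{L}$.} Suppose $x\notin\mathcal{L}$ lies on some geodesic $\gamma:[0,\tau]\to\mathbb{R}^2$ with $x=\gamma(t_0)$, $t_0\in{]0,\tau[}$. By Theorem~\ref{thm:pause} applied to a small interval $[t_1,t_2]\ni t_0$, the speed $V$ is bounded below by some $\varepsilon>0$ along $\gamma([t_1,t_2])$. But $x\notin\mathcal{L}$ means $V(x)=0$; since $\gamma$ is continuous and the set $\{V\ge\varepsilon\}$ near $x$ is a finite union of road segments not containing $x$, the point $\gamma(t_0)$ would have to be an accumulation point of road segments of speed $\ge\varepsilon$, contradicting local finiteness of $\Pi$ restricted to speeds $\ge\varepsilon$ in a bounded region (a standard property of the Poisson intensity $\mu\otimes v^{-\beta}\,\mathrm{d}v$ with $\beta>2$). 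Hence the geodesic frame is contained in $\mathcal{L}$.

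\textbf{Step 2: every point of $\mathcal{L}\setminus\mathcal{I}$ is a $2^+$-hub.} Fix $x\in\ell$ for a road $(\ell,v)\in\Pi$ with $x\notin\mathcal{I}$. The two half-lines of $\ell$ issuing from $x$ are each geodesics on a small scale (driving along $\ell$ at speed $v$ is locally optimal, by a short argument using that any competing off-road detour costs strictly more — this is essentially the content of the ``driving off-road is never geodesic'' principle alluded to in the abstract, which should already be available, or provable by a direct comparison since $V<v$ in a neighbourhood off $\ell$), and their concatenation is the segment of $\ell$ through $x$, which is clearly a geodesic. This exhibits $x$ as a $2^+$-hub. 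To see $x$ is not a $3^+$-star: any geodesic $\gamma$ with $\gamma(0)=x$ must, by Step~1's reasoning and Theorem~\ref{thm:pause}, leave $x$ along a road of positive speed, i.e. along $\ell$ itself (the only road through $x$), so it departs in one of exactly two directions; two geodesics leaving in the same direction along $\ell$ must coincide on a neighbourhood of $x$ by uniqueness/confluence (Proposition~\ref{prop:confluence}), so at most two disjoint geodesics can emanate from $x$.

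\textbf{Step 3: the intersection points.} For $x\in\mathcal{I}$, say $x=\ell_1\cap\ell_2$ with distinct roads $(\ell_1,v_1),(\ell_2,v_2)\in\Pi$, the same argument gives four half-line geodesics (two along each $\ell_i$), and one checks that following one half-line of $\ell_i$ into $x$ and then one half-line of $\ell_j$ out of $x$ is a geodesic for every of the relevant pairs: for $i=j$ this is the straight segment; for $i\neq j$ one must verify that going straight through $x$ switching roads beats any shortcut, which follows because near $x$ the only positive-speed routes are along $\ell_1\cup\ell_2$ and a path using only these from one incoming ray to an outgoing ray on the other road has no choice but to pass through $x$. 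This makes $x$ a $4^+$-hub. That $x$ is not a $5^+$-star again uses that a geodesic leaving $x$ must go along $\ell_1$ or $\ell_2$, hence in one of four directions, and two geodesics sharing an initial direction coincide near $x$; combined with the absence of triple intersections (stated in the excerpt), exactly four disjoint geodesics are possible. Finally, the displayed chain $\mathcal{H}_1^+=\mathbb{R}^2$, $\mathcal{H}_2^+=\mathcal{L}$, $\mathcal{H}_3^+=\mathcal{H}_4^+=\mathcal{I}$, $\mathcal{H}_5^+=\emptyset$ is just a bookkeeping consequence: $\mathcal{H}_2^+$ equals the geodesic frame, which is $\mathcal{L}$ by Steps~1--2; a $3^+$-hub is in particular a $3^+$-star, so by Step~2 it lies in $\mathcal{I}$, and by Step~3 every point of $\mathcal{I}$ is even a $4^+$-hub, giving $\mathcal{H}_3^+=\mathcal{H}_4^+=\mathcal{I}$; and $\mathcal{H}_5^+\subseteq\mathcal{H}_5^{\,\text{stars}}=\emptyset$ by Step~3.

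\textbf{Main obstacle.} The delicate point is the \emph{local} optimality claims underpinning the hub structure in Steps~2--3, i.e. that straight travel along a road — and, at an intersection, straight travel through the crossing — cannot be improved by any off-road excursion, even an infinitesimal one exploiting a fractal cloud of tiny nearby roads. Controlling this requires a quantitative lower bound on the cost of any path that deviates from $\ell$ by a given amount, uniformly over the Poisson configuration near $x$; I expect this to be the technical heart, handled via a scaling/renormalisation argument over dyadic annuli around $x$ together with the self-similarity of $\Pi$, and it is presumably where Theorem~\ref{thm:pause} and the confluence estimates are combined most heavily. The converse non-star bounds, by contrast, are comparatively soft once Theorem~\ref{thm:pause} pins down the finitely many possible departure directions.
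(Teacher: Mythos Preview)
Your overall architecture is right, and Steps~1 and~3 are essentially what the paper does (the frame inclusion via Theorem~\ref{thm:pause}, the $4^+$-hub and not-$5^+$-star claims at intersection points via item~\ref{item:confluenceintersections} of Proposition~\ref{prop:confluence}). But Step~2 contains a genuine gap in the ``not a $3^+$-star'' direction, and your ``Main obstacle'' paragraph misidentifies where the difficulty lies.

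You argue that any geodesic $\gamma$ with $\gamma(0)=x\in\ell\setminus\mathcal{I}$ must initially travel along $\ell$, invoking Theorem~\ref{thm:pause} and confluence. Neither tool delivers this. Theorem~\ref{thm:pause} controls $V\circ\gamma$ only on compact subintervals of the \emph{open} interval $]0,\tau[$; since $x=\gamma(0)$ is an endpoint, nothing prevents $\gamma$ from immediately leaving $\ell$ and using an infinite cascade of small roads accumulating at $x$ --- the speed at $x$ is $v>0$ because $x\in\ell$, but $\gamma$ need not \emph{use} $\ell$. As for confluence, item~\ref{item:confluenceroads} of Proposition~\ref{prop:confluence} says exactly what you want, but only for \emph{almost every} $x\in\ell$; a Lebesgue-null set of exceptional points on $\ell$ is not ruled out, and that is precisely what must be eliminated to get the statement for \emph{every} $x\in\mathcal{L}\setminus\mathcal{I}$. (Contrast item~\ref{item:confluenceintersections}, which does hold for every intersection point, since $\mathcal{I}$ is countable --- that is why your not-$5^+$-star argument goes through.)

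The paper treats this as the hard part and devotes a separate lemma (Lemma~\ref{lem:offroad}) to it: one fixes $(\ell_0,v_0)$, works with the ``vertical speed'' $\phi(\ell,v)=|\langle\ell,\ell_0^\perp\rangle|\,v$ of nearby roads, builds at a good scale a polygonal circuit around $x$ out of roads of controlled vertical speed, and shows that a geodesic reaching $x$ without touching $\ell_0$ would force either an anomalously fast road at every small scale near $x$ (ruled out on $\ell_0$ by a further lemma, Lemma~\ref{lem:anormalroutes}) or an anomalously tight cluster of vertical speeds (ruled out by a Poisson computation). By contrast, the positive $2^+$- and $4^+$-hub claims you flagged as delicate are handled relatively cheaply in the paper, via the $(\varepsilon\vee V)$-structure of Proposition~\ref{prop:epsilongeo2} and a direct ``no shortcut'' inequality.
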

{The fact that a typical point on a road is a $2^+$-hub but not a $3^+$-star, and the fact that the intersection point of two roads is a $4^+$-hub but not a $5^+$-star, merely follow from the confluence of geodesics properties in Proposition \ref{prop:confluence}.
The hard part is to prove that there is no exceptional point on a road $(\ell_0,v_0)\in\Pi$ from which one can start a geodesic that immediately leaves $(\ell_0,v_0)$ without using an intersection with another road of $\Pi$.
This is a strong form of Kendall's \cite[Theorem 4.3]{kendall}, in which the author assumes that the geodesic uses $(\ell_0,v_0)$ before leaving it.}

Finally, we start the investigation of the \textbf{cut locus} of $0$ in $\left(\mathbb{R}^2,T\right)$, that is, the set $\mathcal{C}_0$ of points $x\in\mathbb{R}^2$ for which there exist {(at least)} two distinct geodesics from $x$ to $0$.
Adapting an argument of Gwynne \cite[Section 3.1]{gwynnenetworks}, we show that there are at most countably many points from which there exist {(at least)} three distinct geodesics to the origin. 
The existence of such points yields an example of a $3^+$-star which is not on the geodesic frame. 
Actually, we suspect that $\mathcal{C}_0$ has Euclidean Hausdorff dimension strictly greater than $1$ (see \cite{aizenmanburchard} and \cite{moa2coloring} for a general criterion ensuring this), but leave it open for further research:
 
\begin{open}\label{op:}
Prove that the cut locus $\mathcal{C}_0$ of $0$ is dense and path-connected, does not contain any non-trivial cycle, and that there exists $\varepsilon>0$ such that almost surely, we have 
\[\mathrm{dim}_H(\mathcal{C}_0,|\cdot|)\geq1+\varepsilon.\]
\end{open}

%

\paragraph{Acknowledgements.} The first two authors were supported by SuPerGRandMa, the ERC Consolidator Grant no $101087572$. 
We thank Jean-François Le Gall and Grégory Miermont for {insightful} discussions about the presence of $3^+$-hubs in the Brownian sphere.

\tableofcontents

\section{Introduction to Kendall's Poisson roads random metric}\label{sec:rappelsgeo}

In this section, we present the model in detail.
Throughout the paper, we let $\Pi$ be a Poisson process with intensity measure proportional to $\mu\otimes v^{-\beta}\mathrm{d}v$ on $\mathbb{L}\times\mathbb{R}_+^*$, where $\beta>d$ is a parameter of the model, and $\mu$ is the invariant measure on the space $\mathbb{L}$ of affines lines in $\mathbb{R}^d$.
Then, we consider the ``driving time'' random metric $T$ constructed by Kendall, viewing each atom $(\ell,v)$ of $\Pi$ as a road in $\mathbb{R}^d$, with $v$ the speed limit on the line $\ell$.
Let us recall some elements of the construction of these objects.

\paragraph{The invariant measure on the space of lines.}
A convenient way to describe $\mathbb{L}$ and $\mu$ reads as follows.
Choose a reference line, say $\ell_0=\mathbb{R}\times\{0\}^{d-1}$, denote by $\ell_0^\perp=\{0\}\times\mathbb{R}^{d-1}$ the orthogonal hyperplane, and consider the surjective mapping
\[\begin{matrix}
\Phi:&\ell_0^\perp\times\mathbf{SO}\left(\mathbb{R}^d\right)&\longrightarrow&\mathbb{L}\\
&(w,g)&\longmapsto&g(w+\ell_0).
\end{matrix}\]
The space $\mathbb{L}$ is endowed with the finest topology for which $\Phi$ is continuous, and with the corresponding Borel $\sigma$-algebra.
The measure $\mu$ is defined as the pushforward of the product of the $(d-1)$-dimensional Lebesgue measure on $\ell_0^\perp$ times the Haar probability measure on $\mathbf{SO}\left(\mathbb{R}^d\right)$.
The measure $\mu$ is invariant under rotations and translations, and self-similar under scaling.
More precisely, for every $x\in\mathbb{R}^d$, $r>0$ and $h\in\mathbf{SO}\left(\mathbb{R}^d\right)$, the pushforward of $\mu$ by the map $f:\ell\in\mathbb{L}\mapsto x+r\cdot h(\ell)$ is the measure $\mu\circ f^{-1}=r^{-(d-1)}\cdot\mu$.
For a compact subset $K\subset\mathbb{R}^d$, we denote by $\langle K\rangle=\{\ell\in\mathbb{L}:\ell\cap K\neq\emptyset\}$ the set of lines that hit $K$.
Writing ${\overline{B}(x,r)=\left\{y\in\mathbb{R}^d:|x-y|\leq r\right\}}$ for the closed Euclidean ball centered at $x\in\mathbb{R}^d$ with radius $r>0$, we have 
\begin{equation}\label{eq:normmugeo}
\mu\left\langle\overline{B}(x,r)\right\rangle=\upsilon_{d-1}\cdot r^{d-1},
\end{equation}
where
\[\upsilon_s=\frac{\pi^{s/2}}{\Gamma(s/2+1)}\]
is the Lebesgue measure of the unit Euclidean ball in $\mathbb{R}^s$.
In fact, the measure $\mu$ is the unique Borel measure on $\mathbb{L}$ which is invariant under rotations and translations such that \eqref{eq:normmugeo} holds (we refer to \cite{stochintgeo}).

\paragraph{The Poisson process of roads $\Pi$.}
Fix a parameter $\beta>d$, and let $\Pi$ be a Poisson process with intensity measure ${\nu=c\cdot\mu\otimes v^{-\beta}\mathrm{d}v}$ on $\mathbb{L}\times\mathbb{R}_+^*$, where the normalising constant $c=\upsilon_{d-1}^{-1}\cdot(\beta-1)$ is conveniently chosen so that for every $x\in\mathbb{R}^d$, $r>0$ and $v_0\in\mathbb{R}_+^*$, the parameter of the Poisson random variable 
\[\Pi\left\{(\ell,v)\in\mathbb{L}\times\mathbb{R}_+^*:\text{$\ell$ hits $\overline{B}(x,r)$ and $v\geq v_0$}\right\}\]
equals
\begin{equation}\label{eq:normgeo}
c\cdot\mu\left\langle\overline{B}(x,r)\right\rangle\cdot\int_{v_0}^\infty v^{-\beta}\mathrm{d}v=r^{d-1}\cdot v_0^{-(\beta-1)}.
\end{equation}
By construction, the process $\Pi$ has the following invariance property: for every $x\in\mathbb{R}^d$ and $r>0$, we have the equality in distribution
\begin{equation}\label{eq:selfsimilarPi}
\Pi\circ f_{x,r}^{-1}=\left\{f_{x,r}(\ell,v)\,;\,(\ell,v)\in\Pi\right\}\overset{\text{\scriptsize law}}{=}\Pi,
\end{equation}
where
\begin{equation}\label{eq:defscaling}
\begin{matrix}
f_{x,r}:&\mathbb{L}\times\mathbb{R}_+^*&\longrightarrow&\mathbb{L}\times\mathbb{R}_+^*\\
&(\ell,v)&\longmapsto&\left(x+r\cdot\ell,r^{(d-1)/(\beta-1)}\cdot v\right).
\end{matrix}
\end{equation}
{We could also add a rotation $h\in\mathbf{SO}\left(\mathbb{R}^d\right)$ to the picture, but we will only need to make explicit reference to the $f_{x,r}$ maps.}
Viewing each atom $(\ell,v)$ of $\Pi$ as a road in $\mathbb{R}^d$, with $v$ the speed limit on the line $\ell$, picture the process $\Pi$ as generating a random road network in $\mathbb{R}^d$.
This road network has the following properties:
\begin{itemize}
\item Almost surely, for every $x\in\mathbb{R}^d$ and $r>0$, the number of roads of $\Pi$ with speed at least $v_0$ that pass through $\overline{B}(x,r)$ is finite for each $v_0\in\mathbb{R}_+^*$, and goes to $\infty$ as $v_0\to0$,

\item Almost surely, for every $v\in\mathbb{R}_+^*$, there is at most one road of $\Pi$ with speed $v$,

\item For each $x\in\mathbb{R}^d$, almost surely, there is no road of $\Pi$ that passes through $x$,

\item In dimension $d\geq3$, almost surely, the roads of $\Pi$ do not intersect, 

\item In the planar case $d=2$, almost surely, no three roads of $\Pi$ intersect. 
\end{itemize}
The last four points are easily checked using the (multivariate) Mecke formula (see, e.g, \cite[Theorem 4.4]{lastpenrose}).

\paragraph{Construction of the ``driving time'' random metric $T$.}
The idea is to drive on the random road network generated by $\Pi$, and to consider the random metric $T$ on $\mathbb{R}^d$ for which the distance between points $x,y\in\mathbb{R}^d$ is given by the infimal driving time of a path that respects the speed limits from $x$ to $y$.
To give a proper definition to this metric, let $V:\mathbb{R}^d\rightarrow\mathbb{R}_+$ be the (random) ``speed limits'' function, defined by:
\[V(x)=\sup\{v;\,(\ell,v)\in\Pi:\text{$\ell$ passes through $x$}\}\quad\text{for all $x\in\mathbb{R}^d$,}\]
with the convention $\sup\emptyset=0$.
If no road of $\Pi$ passes through $x$, then $V(x)=0$, otherwise $x$ lies on one or two roads of $\Pi$, and $V(x)$ corresponds to the maximal speed limit between these roads.
Given a realisation of $\Pi$, we call $V$-path any continuous path $\gamma:[0,\tau]\rightarrow\mathbb{R}^d$ that respects the speed limits set by $V$, i.e, such that
\[|\gamma(t_2)-\gamma(t_1)|\leq\int_{t_1}^{t_2}V(\gamma(t))\mathrm{d}t\quad\text{for all $t_1\leq t_2\in[0,\tau]$.}\]
The seminal result of Kendall \cite[Theorem 3.6]{kendall} states that this definition is not vacuous: almost surely, for every $x,y\in\mathbb{R}^d$, there exists a $V$-path $\gamma:[0,\tau]\rightarrow\mathbb{R}^d$ from $x$ to $y$.
Moreover, the (random) function
\begin{equation}\label{eq:defT}
\begin{matrix}
T:&\mathbb{R}^d\times\mathbb{R}^d&\longrightarrow&\mathbb{R}_+\\
&(x,y)&\longmapsto&\inf\left\{\tau>0:\text{there exists a $V$-path $\gamma:[0,\tau]\rightarrow\mathbb{R}^d$ from $x$ to $y$}\right\}
\end{matrix}
\end{equation}
defines a metric on $\mathbb{R}^d$.

\subparagraph{Geodesic $V$-paths.}
In fact, the infimum in \eqref{eq:defT} is a minimum: almost surely, for every $x,y\in\mathbb{R}^d$, there exists a $V$-path $\gamma:[0,\tau]\rightarrow\mathbb{R}^d$ with driving time $\tau=T(x,y)$.
We call any such $V$-path a geodesic $V$-path.
Geodesic $V$-paths correspond to geodesics in the metric space $\left(\mathbb{R}^d,T\right)$, i.e, to paths $\gamma:[0,\tau]\rightarrow\mathbb{R}^d$ such that $T(\gamma(t_1),\gamma(t_2))=|t_1-t_2|$ for all $t_1,t_2\in[0,\tau]$.
It was shown by Kendall \cite{kendall} (in the planar case $d=2$) and by the third author \cite{kahn} (in dimension $d\geq3$) that for each $x,y\in\mathbb{R}^d$, almost surely, there exists a unique geodesic $V$-path from $x$ to $y$.
We shall refer to this fundamental property as the ``uniqueness of geodesics between typical points''.
Finally, let us state some basic properties of geodesic $V$-paths that we will use throughout the paper.
Given a $V$-path $\gamma:[0,\tau]\rightarrow\mathbb{R}^d$ and a road $(\ell,v)\in\Pi$, we say that $\gamma$ uses $(\ell,v)$ if there exists $t_1<t_2\in[0,\tau]$ such that $\gamma[t_1,t_2]\subset\ell$, with $\gamma(t_1)\neq\gamma(t_2)$.
Almost surely, for any geodesic $V$-path $\gamma:[0,\tau]\rightarrow\mathbb{R}^d$, the following holds, where $(\ell_1,v_1),(\ell_2,v_2),\ldots$ are the roads of $\Pi$ used by $\gamma$, with $v_1>v_2>\ldots$.
By the triangle inequality, the set of times $\{t\in[0,\tau]:\gamma(t)\in\ell_1\}$ is an interval, say $[a,b]\subset[0,\tau]$, and we have $\gamma(t)=\gamma(a)+t/v_1\cdot(\gamma(b)-\gamma(a))$ for all $t\in[a,b]$.
By the same argument, the sets of times $\{t\in[0,a]:\gamma(t)\in\ell_2\}$ and $\{t\in[b,\tau]:\gamma(t)\in\ell_2\}$ are intervals, etc.
Iterating the argument shows that {for each $i$, the set $\gamma[0,\tau]\cap\ell_i$ is a finite union of line segments, among which at most $2^i$ are non-trivial.}
Finally, {it can be shown that} the set of times $\{t\in[0,\tau]:\gamma(t)\in\mathcal{L}\}$ is of full Lebesgue measure, where $\mathcal{L}=\bigcup_{(\ell,v)\in\Pi}\ell$ is the set of points on roads.
In particular, we have the equalities
\[\tau=\sum_{(\ell,v)\in\Pi}\frac{\mathrm{length}(\gamma[0,\tau]\cap\ell)}{v}\quad\text{and}\quad\mathrm{Length}(\gamma)=\sum_{(\ell,v)\in\Pi}\mathrm{length}(\gamma[0,\tau]\cap\ell).\]
Here, the quantity $\mathrm{length}(\gamma[0,\tau]\cap\ell)$ is the one-dimensional Lebesgue measure of the finite union of line segments $\gamma[0,\tau]\cap\ell$, whereas $\mathrm{Length}(\gamma)$ is the length of the Lipschitz-continuous path $\gamma$.
Finally, in the planar case $d=2$, Kendall \cite[Theorem 4.3]{kendall} proved that any non-trivial line segment within a geodesic is immediately followed by another non-trivial line segment. 
However, those segments could possibly accumulate inside a geodesic (this must be the case at the endpoints of the geodesic, if they do not belong to $\mathcal{L}$), see Figure \ref{fig:pauseintro}. 
Kendall conjectured that this is not the case, and we shall establish this in Theorem \ref{thm:pause}.

\section{Confluence of geodesics}\label{sec:confluence}

In this section, we establish some confluence of geodesics properties in $\left(\mathbb{R}^d,T\right)$, which we summarise in the following proposition. 
Some of these were already known in the planar case $d=2$, and our mere input here is to present the technical details of the construction in general dimension $d\geq2$, and to derive a few useful corollaries on the structure of geodesics, following \cite{dauvergne27}.
See \cite{gwynnemillerconfluence,gwynnenetworks,millerqian,dauvergne27} for similar results in Liouville quantum gravity, in Brownian geometry or in the directed landscape.

\begin{figure}[!ht]
\begin{center}
\includegraphics[width=15cm]{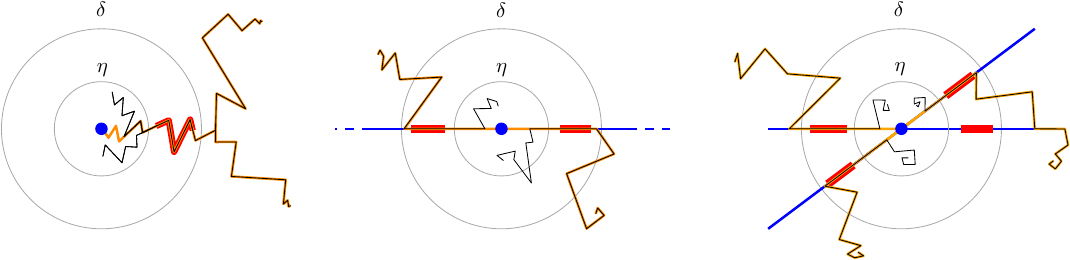}
\caption{Illustration of the confluence of geodesics properties: at and near a typical point \emph{(left)}, at and near a typical point on a road \emph{(centre)}, and at and near the intersection point of two roads \emph{(right)}.
In each case, the target point $x$ is in blue.
For each $\delta>0$, there exists (a random) $\eta\in{]0,\delta[}$ such that any geodesic from a point inside $\overline{B}(x,\eta)$ to a point outside $\overline{B}(x,\delta)$ must pass through one of the red regions.
Moreover, geodesics (in orange) from points outside $\overline{B}(x,\delta)$ to $x$ coalesce before hitting $\overline{B}(x,\eta)$.}
\end{center}
\end{figure}

\begin{prop}\label{prop:confluence}
In $\left(\mathbb{R}^d,T\right)$, we have the following confluence of geodesics properties.
\begin{enumerate}
\item\label{item:confluencetyp} Fix $x\in\mathbb{R}^d$.
Almost surely, 
\begin{enumerate}
\item For every $\delta>0$, there exists ${\eta\in{]0,\delta[}}$ and a cut point $z\in\left.\overline{B}(x,\delta)\middle\backslash\overline{B}(x,\eta)\right.$ such that any geodesic $V$-path from a point inside $\overline{B}(x,\eta)$ to a point outside $\overline{B}(x,\delta)$ must pass through $z$,

\item For every $y_1,y_2\in\left.\mathbb{R}^d\middle\backslash\{x\}\right.$, and for any geodesic $V$-paths ${\gamma_1:[0,T(x,y_1)]\rightarrow\mathbb{R}^d}$ from $x$ to $y_1$ and ${\gamma_2:[0,T(x,y_2)]\rightarrow\mathbb{R}^d}$ from $x$ to $y_2$, there exists $t_0>0$ such that ${\gamma_1(t)=\gamma_2(t)}$ for all $t\in[0,t_0]$.
\end{enumerate}

\item\label{item:confluenceroads} Almost surely, for each road $(\ell,v)\in\Pi$, the following holds for almost every point $x\in\ell$: 
\begin{enumerate}
\item For each $\delta>0$, there exists $\eta\in{]0,\delta[}$ such that any geodesic $V$-path from a point inside $\overline{B}(x,\eta)$ to a point outside $\overline{B}(x,\delta)$ must use $(\ell,v)$ within $\overline{B}(x,\delta)$,

\item For any geodesic $V$-path $\gamma:[0,\tau]\rightarrow\mathbb{R}^d$ such that $\gamma(0)=x$, there exists ${t_0\in{]0,\tau]}}$ such that $\gamma(t)\in\ell$ for all $t\in[0,t_0]$.
\end{enumerate}

\item\label{item:confluenceintersections} In the planar case $d=2$, almost surely, for each pair of roads $(\ell_1,v_1)\neq(\ell_2,v_2)\in\Pi$, the following holds, where $x$ is the intersection point of $\ell_1$ and $\ell_2$:
\begin{enumerate}
\item For every $\delta>0$, there exists $\eta\in{]0,\delta[}$ such that any geodesic $V$-path from a point inside $\overline{B}(x,\eta)$ to a point outside $\overline{B}(x,\delta)$ must use $(\ell_1,v_1)$ or $(\ell_2,v_2)$ within $\overline{B}(x,\delta)$,

\item For any geodesic $V$-path $\gamma:[0,\tau]\rightarrow\mathbb{R}^2$ such that $\gamma(0)=x$, there exists ${t_0\in{]0,\tau]}}$ such that $\gamma(t)\in\ell_1$ for all $t\in[0,t_0]$ or $\gamma(t)\in\ell_2$ for all $t\in[0,t_0]$.
\end{enumerate}
\end{enumerate}
\end{prop}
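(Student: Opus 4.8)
The plan is to establish all three cases with a single unified strategy, treating the typical point, the typical point on a road, and the intersection point of two roads as increasingly structured instances of the same phenomenon. In each case the statements (a) (existence of a bottleneck cut point or bottleneck road) and (b) (local coalescence of geodesics toward $x$) are essentially equivalent once one has suitable compactness and uniqueness inputs, so I would prove (a) first and deduce (b), or prove them together. The engine behind (a) is a \emph{resampling} argument combined with the self-similarity \eqref{eq:selfsimilarPi}: around $x$, rescale $\Pi$ by $f_{x,r}^{-1}$ so that a fixed-size annulus becomes a large annulus at scale $1$, and use that with positive probability the Poisson configuration in the annulus exhibits a favourable "trap" (e.g.\ in case 1, a narrow passage made of slow roads whose only fast exit is through a single point; in case 2, the configuration near $x\in\ell$ forces any excursion that wants to leave a neighbourhood quickly to get back onto $(\ell,v)$; in case 3, similarly with $(\ell_1,v_1)\cup(\ell_2,v_2)$). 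Having positive probability at each scale, and using the near-independence of $\Pi$ restricted to disjoint annuli $\overline{B}(x,2^{-k})\setminus\overline{B}(x,2^{-k-1})$, a Borel--Cantelli argument gives that such a favourable scale occurs almost surely for infinitely many $k$, hence the required $\eta<\delta$ exists for every $\delta$. The cut point $z$ in 1(a) is then the unique fast exit of the trap, and it is a genuine cut point because every geodesic between the inside and the outside must traverse it.

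The deduction of the coalescence statements (b) from (a) is the standard compactness argument. For 1(b): fix $y_1,y_2$ and geodesics $\gamma_1,\gamma_2$ from $x$. Apply 1(a) at a sequence $\delta_n\downarrow 0$ to get cut points $z_n$ through which \emph{every} geodesic from near $x$ to far from $x$ must pass; in particular both $\gamma_1$ and $\gamma_2$ pass through $z_n$ for $n$ large. Since $z_n\to x$ and since the initial segments of $\gamma_1,\gamma_2$ up to $z_n$ are both geodesics from $x$ to $z_n$, one needs uniqueness of the geodesic from $x$ to $z_n$. This is exactly where the "uniqueness of geodesics between typical points" is used — but $z_n$ is not a deterministic point, so one must be slightly careful: either build $z_n$ measurably and invoke a countable-dense-exhaustion argument, or (cleaner) observe that since \emph{all} geodesics from inside $\overline{B}(x,\eta_n)$ to outside $\overline{B}(x,\delta_n)$ pass through $z_n$, and the trap forces them to follow the \emph{same} path inside the trap, the initial segments coincide on $[0,t_0]$ with $t_0 = T(x,z_n)$ directly, without needing pointwise uniqueness. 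For 2(b) and 3(b) the same scheme applies, with "passes through $z$" replaced by "uses $(\ell,v)$" (resp.\ "uses $(\ell_1,v_1)$ or $(\ell_2,v_2)$"), and the conclusion is that the geodesic lies on $\ell$ (resp.\ on $\ell_1$ or $\ell_2$) for a positive initial time.

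For the "almost every point $x\in\ell$" phrasing in case 2 and the fixed-intersection-point phrasing in case 3, I would set things up so that the favourable-scale events are defined relative to the road $(\ell,v)$: conditionally on $(\ell,v)\in\Pi$, by the Mecke formula the rest of $\Pi$ is again a Poisson process of the same intensity, and the self-similarity map $f_{x,r}$ can be taken to fix $\ell$; then the resampling/Borel--Cantelli argument runs at Lebesgue-almost every $x\in\ell$ simultaneously via Fubini. In case 3, conditionally on $(\ell_1,v_1),(\ell_2,v_2)\in\Pi$ with intersection point $x$, the point $x$ is determined, so no "almost every" is needed, and the argument is run at that single (random) point.

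The main obstacle I expect is twofold. First, constructing the favourable "trap" configuration at scale $1$ with a \emph{quantitative lower bound on its probability that is uniform across scales} — this requires a careful deterministic analysis of the driving metric inside a prescribed Poisson configuration (showing the narrow passage is genuinely a bottleneck, i.e.\ that no geodesic can cheaply go around it using roads from outside the annulus), which is delicate because $T$ is a global object and far-away fast roads could in principle provide shortcuts; one controls this using the growth rate of $T$-balls and the fact that a path leaving $\overline{B}(x,\delta)$ and returning costs a definite amount of $T$-time. Second, the measurability/"for all roads, for almost every $x\in\ell$" uniformity: one must upgrade a statement that holds a.s.\ for each fixed road and fixed point to one that holds a.s.\ simultaneously, which is handled by a Fubini argument over $\Pi$ together with a monotone-class/countable-exhaustion step, but needs the favourable events to be defined measurably in $(\Pi, (\ell,v), x)$.
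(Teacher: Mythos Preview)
Your overall architecture is correct and matches the paper's: construct a ``master'' favourable event at unit scale with positive probability, transport it to all scales via $f_{x,r}$, and use a zero--one type argument across scales to get infinitely many good scales. The paper implements this via the explicit $\mathsf{Inner}/\mathsf{Outer}/\mathsf{Junction}/\mathsf{Desert}$ construction (Lemma~\ref{lem:confluencetypmaster}) and, for items~\ref{item:confluenceroads} and~\ref{item:confluenceintersections}, a companion event (Lemma~\ref{lem:confluenceroadsmaster}) together with the Mecke formula and Fubini, just as you outline. One technical difference: the paper does not use annular independence plus Borel--Cantelli directly, because the favourable event depends on roads crossing the whole ball (not just the annulus); instead it invokes ergodicity of the scaling action (Lemma~\ref{lem:mixingzoom}) to get a positive density of good scales. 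Your ``near-independence of disjoint annuli'' would need to be replaced by this.

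There is, however, a genuine gap in your deduction of 1(b) from 1(a). You correctly identify that one needs uniqueness of the geodesic from $x$ to the cut point $z$, and that $z$ is random so ``uniqueness between typical points'' does not apply to the pair $(x,z)$. But neither of your two workarounds succeeds. The ``measurable $z_n$ plus countable exhaustion'' route does not help: measurability of $z$ does not make $(x,z)$ a typical pair. And the ``cleaner'' route---that the trap forces \emph{the same path} from $x$ to $z$---is false as stated: the trap only forces passage through $z$ (concretely, it forces use of the $\mathsf{Junction}$ road on a definite segment), but says nothing about how the geodesic gets from $x$ to the entry point of that segment; there could a priori be several geodesics from $x$ to $z$.

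The paper's fix is a trick you are missing: introduce a \emph{fixed deterministic} auxiliary target, say $e_1$, and let $\gamma_0$ be the a.s.\ unique geodesic from $x$ to $e_1$. The cut point $z$ traps $\gamma_0$ as well as $\gamma_1,\gamma_2$. Now concatenate the initial segment of $\gamma_i$ (from $x$ to $z$) with the final segment of $\gamma_0$ (from $z$ to $e_1$); this is a geodesic from $x$ to $e_1$, hence equals $\gamma_0$ by uniqueness, so $\gamma_i|_{[0,T(x,z)]}=\gamma_0|_{[0,T(x,z)]}$ for $i=1,2$. This is how uniqueness between \emph{typical} points is leveraged even though $z$ is random. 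A similar remark applies to 2(b) and 3(b): ``uses $(\ell,v)$ within $\overline{B}(x,\delta)$'' does not immediately give ``lies on $\ell$ on an initial interval''; the paper closes this by taking $\delta$ small enough that $(\ell,v)$ is the fastest road in $\overline{B}(x,\delta)$ and then arguing via the triangle inequality that the geodesic must run straight along $\ell$ from $x$ to its first hit of $\ell$.
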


The first item is proved in Subsection \ref{subsec:confluencetyp}, and the second and third items are proved in Subsection \ref{subsec:confluenceroads}. 
It has already been noticed in planar random geometry that confluence of geodesics properties can be used to derive a very useful approximation of geodesics result by geodesics between typical points: see \cite[Lemma 3.3]{dauvergne27}, or \cite[Theorem 1.7]{millerqian}. 
For the convenience of the reader, we provide a quick proof in our setting.

\begin{lem}\label{lem:approxgeotyp}
Let $D$ be a countable and dense subset of $\mathbb{R}^d$.
Almost surely, for every geodesic $V$-path $\gamma:[0,\tau]\rightarrow\mathbb{R}^d$, the following holds: for every $t_1<t_2\in{]0,\tau[}$, there exists $y_1\neq y_2\in D$ and $t_1'<t_2'\in{]0,T(y_1,y_2)[}$ such that $\gamma[t_1,t_2]=\gamma_{y_1,y_2}[t_1',t_2']$, where $\gamma_{y_1,y_2}:[0,T(y_1,y_2)]\rightarrow\mathbb{R}^d$ denotes the unique geodesic $V$-path from $y_1$ to $y_2$.
\end{lem}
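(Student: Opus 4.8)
\textbf{Proof plan for Lemma \ref{lem:approxgeotyp}.}

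The plan is to combine the confluence of geodesics at typical points (Proposition \ref{prop:confluence}, item \ref{item:confluencetyp}) with a compactness argument to cover the compact middle portion $\gamma[t_1,t_2]$ of the geodesic by finitely many initial segments of geodesics emanating from points of $D$. Fix $t_1<t_2\in\,]0,\tau[$ and choose $t_1',t_2'$ with $0<t_1'<t_1$ and $t_2<t_2'<\tau$. The idea is that around each point $x=\gamma(s)$ with $s\in[t_1',t_2']$, confluence provides a nontrivial initial piece of a geodesic emanating from $x$ along $\gamma$; by density of $D$ we can find $y\in D$ close to $x$, and confluence at typical points (applied with target $y$) forces a geodesic from $y$ to $\gamma(t_1')$ — say — to first coincide with $\gamma$ on a neighbourhood of time $s$. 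The subtle point is to orient everything consistently so that pieces of $\gamma$ get recovered as sub-segments of geodesics between two points of $D$, not merely between a point of $D$ and a point of $\gamma$.

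Concretely, here is the order of steps I would carry out. First, intersect the almost sure events of Proposition \ref{prop:confluence}\ref{item:confluencetyp} over all $x$ in a fixed countable dense set; more precisely, work on the full-measure event that confluence at a typical point holds simultaneously for every $y\in D$ (a countable intersection). Second, fix a geodesic $\gamma:[0,\tau]\to\mathbb{R}^d$ and $t_1<t_2\in\,]0,\tau[$ as above, together with $s_0\in[t_1,t_2]$ and $x_0=\gamma(s_0)$. Pick $y_0\in D$ very close to $x_0$: then a geodesic $\gamma_0$ from $x_0$ to $y_0$ has small length, hence by item \ref{item:confluencetyp}(b) applied at the point $\gamma(t_1')$ (whose confluence property we have retained for the target $y_0\in D$, and for the target $\gamma(s_0)$ if that also lies in a dense set — here one must be slightly careful), any two geodesics emanating from a common point agree near that point; running this twice, once toward $\gamma(t_1')$ and once toward $\gamma(t_2')$, pins down that $\gamma$ restricted to a neighbourhood $]s_0-\rho,s_0+\rho[$ of $s_0$ coincides with the geodesic from some $y_1\in D$ (near $\gamma(t_1')$) to some $y_2\in D$ (near $\gamma(t_2')$). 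Third, the family of such open intervals $]s_0-\rho(s_0),s_0+\rho(s_0)[$ covers the compact set $[t_1,t_2]$; extract a finite subcover and, by a standard chaining/overlap argument, glue the finitely many geodesic pieces into one: since all these geodesics are initial or terminal segments of $\gamma$ up to reparametrisation, two overlapping pieces must be sub-segments of the same bi-infinite-in-spirit geodesic through the overlap, and one concludes there is a single pair $y_1\ne y_2\in D$ with $\gamma[t_1,t_2]=\gamma_{y_1,y_2}[t_1',t_2']$ for suitable $t_1'<t_2'\in\,]0,T(y_1,y_2)[$.

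The main obstacle I anticipate is the bookkeeping in the gluing step: confluence as stated in Proposition \ref{prop:confluence}\ref{item:confluencetyp}(b) only compares geodesics sharing a common \emph{starting} point, so to run the argument at $x_0=\gamma(s_0)$ one needs $x_0$ itself to enjoy the confluence property, which is only guaranteed for points in a fixed countable set. The fix is to choose $y\in D$ playing the role of the near-base point and instead apply confluence at $y$: a geodesic from $y$ to $\gamma(t_1')$ and a geodesic from $y$ to $\gamma(t_2')$ agree on an initial segment, and since $\gamma$ (suitably truncated and traversed through $x_0$) composed with the short geodesic $\gamma_0$ from $x_0$ to $y$ is still geodesic up to an arbitrarily small error — here one uses that $\gamma$ is an honest geodesic and $|x_0-y|$ is tiny — one deduces that $\gamma$ near time $s_0$ is recovered as a sub-segment of $\gamma_{y,\gamma(t_j')}$ for $j=1,2$, and then replaces the endpoints $\gamma(t_j')$ by nearby points of $D$ using confluence once more. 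Carrying this out requires a short quantitative lemma: if $z$ lies on a geodesic $\gamma$ and $w$ is close to $z$, then $T(w,\gamma(a)) + T(w,\gamma(b)) \le T(\gamma(a),\gamma(b)) + 2T(z,w)$, so that concatenating through $w$ costs at most $2T(z,w)$, which can be absorbed; combined with confluence this localises the agreement. I expect everything else — the compactness, the finite subcover, the reparametrisations — to be routine.
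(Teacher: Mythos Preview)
Your approach has a genuine gap at the local step, and the paper's proof takes a quite different and much shorter route.

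The problem is exactly the one you flag but do not resolve: confluence in item~\ref{item:confluencetyp} of Proposition~\ref{prop:confluence} is a statement about geodesics \emph{emanating from} the typical point. If $y\in D$ is chosen near $x_0=\gamma(s_0)$, all you know is that geodesics from $y$ agree on an initial segment; you have no mechanism forcing any of them to coincide with $\gamma$ near time $s_0$. Your proposed fix---that the concatenation $(y\to x_0)\cup(\gamma\text{ from }x_0)$ is ``geodesic up to error $2T(x_0,y)$''---does not help: confluence applies to honest geodesics, not to near-geodesics, and the actual geodesic from $y$ to $\gamma(t_j')$ could avoid $x_0$ entirely. The same circularity bites if you try to use item~\ref{item:confluencetyp}(a) instead: the cut-point radius $\eta=\eta(y,\delta)$ depends on $y$, and you cannot guarantee $\gamma(s_0)\in\overline{B}(y,\eta)$ without first knowing $\eta$. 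Even if the local step worked, the gluing of finitely many overlapping pieces into a single $\gamma_{y_1,y_2}$ is not automatic.

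The paper avoids all of this by using item~\ref{item:confluenceroads} (confluence around points on roads), not item~\ref{item:confluencetyp}. The key observation is that a geodesic $\gamma$ must use roads: before time $t_1$ it drives along some road $(\ell_1,v_1)$ over a segment $[a_1,b_1]\subset\ell_1$, and after time $t_2$ along some $(\ell_2,v_2)$ over $[a_2,b_2]\subset\ell_2$. Since almost every point of each road enjoys the confluence property $P_{(\ell_i,v_i)}$, one can pick $x_i\in\,]a_i,b_i[$ \emph{on} $\gamma$ with that property. Now choose $y_i\in D\cap\overline{B}(x_i,\eta)$: the unique geodesic $\gamma_{y_1,y_2}$ is forced (by confluence on roads) to hit $[a_1,b_1]$ and $[a_2,b_2]$ at some points $z_1,z_2$, which lie on $\gamma$. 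Uniqueness of $\gamma_{y_1,y_2}$ then forces $\gamma$ and $\gamma_{y_1,y_2}$ to agree between $z_1$ and $z_2$, hence on $[t_1,t_2]$. No compactness, no gluing, no ``nearly geodesic'' estimate---the confluence anchors sit directly on $\gamma$, which is what makes the argument go through in one stroke.
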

\begin{proof}
First, recall that almost surely, for each road $(\ell,v)\in\Pi$, the following property $P_{(\ell,v)}(x)$ holds for almost every $x\in\ell$: ``for each $\delta>0$, there exists $\eta\in{]0,\delta[}$ such that any geodesic $V$-path from a point inside $\overline{B}(x,\eta)$ to a point outside $\overline{B}(x,\delta)$ must use $(\ell,v)$ within $\overline{B}(x,\delta)$''.
This is item \ref{item:confluenceroads} of Proposition \ref{prop:confluence}.
Now, fix a typical realisation of $\Pi$, and let $\gamma:[0,\tau]\rightarrow\mathbb{R}^d$ be a geodesic $V$-path.
Then, fix ${t_1<t_2\in{]0,\tau[}}$, and let us prove that there exists $y_1\neq y_2\in D$ and $t_1'<t_2'\in{]0,T(y_1,y_2)[}$ such that $\gamma[t_1,t_2]=\gamma_{y_1,y_2}[t_1',t_2']$.
\begin{itemize}
\item Since the $V$-path $\gamma$ travels an Euclidean distance $|\gamma(t_1)-\gamma(0)|$ between times $0$ and $t_1$, it must use a road of $\Pi$ with speed at least $|\gamma(t_1)-\gamma(0)|/t_1$, say $(\ell_1,v_1)$: there exists $a_1\neq b_1\in\ell_1$ such that before time $t_1$, the $V$-path $\gamma$ drives from $a_1$ to $b_1$ using the road $(\ell_1,v_1)$.
Then, we can fix a point $x_1\in{]a_1,b_1[}$ for which $P_{(\ell_1,v_1)}(x_1)$ holds.

\item Similarly, since the $V$-path $\gamma$ travels an Euclidean distance $|\gamma(\tau)-\gamma(t_2)|$ between times $t_2$ and $\tau$, it must use a road of $\Pi$ with speed at least $|\gamma(\tau)-\gamma(t_2)|/(\tau-t_2)$, say $(\ell_2,v_2)$: there exists $a_2\neq b_2\in\ell_2$ such that after time $t_2$, the $V$-path $\gamma$ drives from $a_2$ to $b_2$ using the road $(\ell_2,v_2)$.
Then, we can fix a point $x_2\in{]a_2,b_2[}$ for which $P_{(\ell_2,v_2)}(x_2)$ holds.
\end{itemize}
Next, let $\delta\in{]0,|x_1-x_2|/2[}$ be small enough so that 
\[\ell_1\cap\overline{B}(x_1,\delta)\subset[a_1,b_1]\quad\text{and}\quad\ell_2\cap\overline{B}(x_2,\delta)\subset[a_2,b_2].\]
By $P_{(\ell_1,v_1)}(x_1)$ and $P_{(\ell_2,v_2)}(x_2)$, there exists $\eta\in{]0,\delta[}$ such that:
\begin{itemize}
\item Any geodesic $V$-path from a point inside $\overline{B}(x_1,\eta)$ to a point outside $\overline{B}(x_1,\delta)$ must use $(\ell_1,v_1)$ within $\overline{B}(x_1,\delta_1)$,

\item Any geodesic $V$-path from a point inside $\overline{B}(x_2,\eta)$ to a point outside $\overline{B}(x_2,\delta)$ must use $(\ell_2,v_2)$ within $\overline{B}(x_2,\delta_2)$.
\end{itemize}
Finally, fix points $y_1\in\overline{B}(x_1,\eta)\cap D$ and $y_2\in\overline{B}(x_2,\eta)\cap D$, and let $\gamma_{y_1,y_2}$ be the unique geodesic $V$-path from $y_1$ to $y_2$.
By the confluence properties above, there exist points ${z_1\in[a_1,b_1]}$ and $z_2\in[a_2,b_2]$ such that $\gamma_{y_1,y_2}$ passes through $z_1$ and $z_2$.
By the uniqueness of $\gamma_{y_1,y_2}$, the $V$-path $\gamma$ must agree with $\gamma_{y_1,y_2}$ between $z_1$ and $z_2$.
Letting $t_1'<t_2'\in{]0,T(y_1,y_2)[}$ be defined by $\gamma_{y_1,y_2}(t_1')=\gamma(t_1)$ and $\gamma_{y_1,y_2}(t_2')=\gamma(t_2)$, we obtain the result of the lemma.
\end{proof}

{It is also useful to record the following result on the structure of geodesic networks between a typical point and a given point, which in planar random geometry is a well-known consequence of the confluence of geodesics properties: see \cite[Section 1.3]{angelkolesnikmiermont} in Brownian geometry, or \cite[Lemma 1.6]{gwynnenetworks} in Liouville quantum gravity.
We omit the proof, since it uses the same arguments as above: confluence of geodesics properties (Proposition \ref{prop:confluence}) and uniqueness of geodesics between typical points.

\begin{lem}\label{lem:nobanana}
Fix $x\in\mathbb{R}^d$.
Almost surely, for every point $y\in\left.\mathbb{R}^d\middle\backslash\{x\}\right.$, and for any geodesic $V$-path ${\gamma:[0,T(x,y)]\rightarrow\mathbb{R}^d}$ from $x$ to $y$, the following holds: for each $t\in{]0,T(x,y)[}$, the restriction of $\gamma$ to $[0,t]$ is the unique geodesic $V$-path from $x$ to $\gamma(t)$.

In particular, for every point $y\in\left.\mathbb{R}^d\middle\backslash\{x\}\right.$, and for any two distinct geodesic $V$-paths 
\[\gamma_1,\gamma_2:[0,T(x,y)]\rightarrow\mathbb{R}^d\]
from $x$ to $y$, there exists $t_0\in{]0,T(x,y)[}$ such that $\gamma_1(t)=\gamma_2(t)$ for all $t\in[0,t_0]$, and $\gamma_1{]t_0,T(x,y)[}\cap\gamma_2{]t_0,T(x,y)[}=\emptyset$.
\end{lem}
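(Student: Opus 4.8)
The plan is to prove the first assertion — that $\gamma|_{[0,t]}$ is the \emph{unique} geodesic $V$-path from $x$ to $\gamma(t)$ for every interior time $t$ — and then read off the ``in particular'' statement (no ``banana'') from it. I would work on the almost sure event on which the conclusion of Lemma \ref{lem:approxgeotyp} holds, on which item \ref{item:confluencetyp}(b) of Proposition \ref{prop:confluence} holds at the fixed point $x$, and on which, for every pair $y_1\neq y_2$ in a fixed countable dense set $D\subset\mathbb{R}^d$, there is a unique geodesic $V$-path from $y_1$ to $y_2$ (a countable intersection of almost sure events).

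For the first assertion I would fix a geodesic $V$-path $\gamma:[0,T(x,y)]\rightarrow\mathbb{R}^d$ from $x$ to some $y\neq x$, a time $t\in{]0,T(x,y)[}$, set $z=\gamma(t)$, let $\eta$ be any geodesic $V$-path from $x$ to $z$, and aim to show $\eta=\gamma|_{[0,t]}$. First, confluence at $x$ gives $t_0>0$ with $\eta\equiv\gamma$ on $[0,t_0]$. Then I would pick $\varepsilon\in{]0,\min(t_0,t,T(x,y)-t)[}$ and apply Lemma \ref{lem:approxgeotyp} to $\gamma$ with the times $\varepsilon$ and $t+\varepsilon$, obtaining $y_1\neq y_2\in D$ and $0<s_1<s_z<s_2<T(y_1,y_2)$ with $\gamma|_{[\varepsilon,t+\varepsilon]}=\rho|_{[s_1,s_2]}$, where $\rho:=\gamma_{y_1,y_2}$, $x':=\rho(s_1)=\gamma(\varepsilon)$ and $z=\rho(s_z)$ with $s_z-s_1=t-\varepsilon$. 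Since $\varepsilon<t_0$ we have $\eta(\varepsilon)=x'$, so $\eta|_{[\varepsilon,t]}$ is a geodesic $V$-path from $x'$ to $z$ with driving time $s_z-s_1$. The key step is then to concatenate: following $\rho$ from $y_1$ to $x'$, then $\eta$ from $x'$ to $z$, then $\rho$ from $z$ to $y_2$ produces a $V$-path from $y_1$ to $y_2$ whose driving times add up to exactly $s_1+(s_z-s_1)+(T(y_1,y_2)-s_z)=T(y_1,y_2)$, hence a geodesic $V$-path from $y_1$ to $y_2$; by uniqueness for the pair $y_1,y_2\in D$ it must coincide with $\rho$, and comparing the middle portions gives $\eta|_{[\varepsilon,t]}=\rho|_{[s_1,s_z]}=\gamma|_{[\varepsilon,t]}$. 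Together with $\eta\equiv\gamma$ on $[0,\varepsilon]$, this yields $\eta=\gamma|_{[0,t]}$.

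For the ``in particular'' statement I would take two distinct geodesic $V$-paths $\gamma_1,\gamma_2$ from $x$ to $y$, use confluence at $x$ to see that $t_0:=\sup\{r:\gamma_1\equiv\gamma_2\text{ on }[0,r]\}$ is positive, note that $t_0<T(x,y)$ and that (by continuity) $\gamma_1\equiv\gamma_2$ on $[0,t_0]$, and then argue that $\gamma_1$ and $\gamma_2$ cannot meet again after $t_0$: if $\gamma_1(s_1)=\gamma_2(s_2)=:w$ with $s_1,s_2\in{]t_0,T(x,y)[}$ then $s_1=T(x,w)=s_2=:s$ (restrictions of geodesics are geodesics), and the first assertion applied to $\gamma_1$ at time $s$ forces $\gamma_2|_{[0,s]}=\gamma_1|_{[0,s]}$ with $s>t_0$, contradicting the definition of $t_0$. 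Hence $\gamma_1{]t_0,T(x,y)[}\cap\gamma_2{]t_0,T(x,y)[}=\emptyset$.

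The step I expect to be the only real obstacle is the one requiring uniqueness at a point such as $z$ that need not be a typical point, an intersection point, or a good point on a road, so that Proposition \ref{prop:confluence} does not apply there directly. The way around it, exactly as in the proof of Lemma \ref{lem:approxgeotyp}, is to use that lemma to realise the relevant piece of $\gamma$ as a sub-path of a genuine geodesic $\gamma_{y_1,y_2}$ between two points of the countable dense set $D$, which extends strictly on both sides of that piece; one can then graft the competitor $\eta$ into $\gamma_{y_1,y_2}$ and invoke the almost sure uniqueness of the geodesic between $y_1$ and $y_2$. Everything else is elementary manipulation of concatenations of $V$-paths and their driving times.
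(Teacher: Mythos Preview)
Your proof is correct and follows essentially the approach the paper has in mind: the paper omits the proof of this lemma, saying only that it ``uses the same arguments as above: confluence of geodesics properties (Proposition \ref{prop:confluence}) and uniqueness of geodesics between typical points,'' and your argument does exactly this, packaging the road-confluence step through Lemma \ref{lem:approxgeotyp} as a black box together with confluence at the typical point $x$. The grafting of $\eta|_{[\varepsilon,t]}$ into $\gamma_{y_1,y_2}$ and the appeal to uniqueness between the dense points $y_1,y_2\in D$ is precisely the mechanism the paper intends.
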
}

\subsection{General strategy}

The general strategy which is common to every item of Proposition \ref{prop:confluence} is to construct a ``master'' good event $G$ that has positive probability, on which the desired confluence of geodesics property holds at a fixed scale around some target point, say $x$.
See Lemma \ref{lem:confluencetypmaster} and Figure \ref{fig:confluencetyp} below for {a glimpse of} the master good event $G$ in the case of confluence of geodesics around a typical point. 
{By the invariance in distribution of $\Pi$ under translations and scaling, at each scale when zooming in around $x$, the confluence event has a positive probability of occurring.
Although the scales are not independent, by ergodicity we obtain that almost surely, there is a positive proportion of scales for which the confluence event occurs.}

Let us present the basic tools that we use to construct the master good events $G$.
The first one is essentially a mix between \cite[Theorem 3.1]{kahn} of the third author and \cite[Proposition 1.3]{moa1fractal} of the first author. It states that {away from some point $z$, the ``driving time'' random metric induced by the roads of $\Pi$ with speed less than $1$ that do not pass through some small ball around $z$ already has the same regularity as the vanilla random metric $T$.}

\begin{lem}\label{lem:confluencetool1}
Fix $z\in B(0,1)$ and $r>0$ such that $\overline{B}(z,r)\subset B(0,1)$, and let $A=\left.\overline{B}(0,1)\middle\backslash B(z,r)\right.$.
There exists $\rho\in{]0,r[}$ and a random variable $\Gamma$, measurable with respect to the restriction of $\Pi$ to the set of roads with speed less than $1$ that pass through $\overline{B}(0,2)$ but do not pass through $\overline{B}(z,\rho)$, such that
\[T(x,y)\leq\Gamma\cdot|x-y|^{(\beta-d)/(\beta-1)}\cdot\ln\left(\frac{4}{|x-y|}\right)^{1/(\beta-1)}\quad\text{for all $x\neq y\in A$.}\]
Moreover, there exists constants $C$ and $c$ such that
\[\mathbb{P}\left(\Gamma>t\right)\leq C\cdot\exp\left[-c\cdot t^{\beta-1}\right]\quad\text{for all $t\in\mathbb{R}_+^*$.}\]
\end{lem}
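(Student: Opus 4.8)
The plan is to reduce the statement to the two quoted ingredients. First I would recall the structure of \cite[Proposition 1.3]{moa1fractal}: on the full space $\mathbb{R}^d$, the metric $T$ satisfies a Hölder bound of the form $T(x,y)\le \Gamma_0\cdot|x-y|^{(\beta-d)/(\beta-1)}\cdot\ln(4/|x-y|)^{1/(\beta-1)}$ for $x,y$ in a fixed compact set, with a stretched-exponential tail on $\Gamma_0$ — this comes from a multiscale/chaining argument in which, at Euclidean scale $2^{-k}$, one uses a road of speed of order $2^{-k(d-1)/(\beta-1)}$ crossing the relevant dyadic cell, the existence of such a road at each scale having probability bounded below by \eqref{eq:normgeo}, and the chaining series $\sum_k 2^{-k}\cdot 2^{k(d-1)/(\beta-1)}$ converging precisely because $\beta>d$. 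Second, I would invoke \cite[Theorem 3.1]{kahn}, whose point is that this bound already holds — up to a worse constant — if one only drives on roads of speed less than $1$; restricting to slow roads only affects the finitely many largest scales, i.e. a bounded correction absorbed into the random constant.

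The new content is the deletion of roads through a small ball $\overline{B}(z,\rho)$, and the measurability claim. Here I would argue as follows. Run the chaining construction of $\cite{moa1fractal}$/$\cite{kahn}$ but, at each dyadic scale, only use roads that do \emph{not} pass through $\overline{B}(z,\rho)$ for a parameter $\rho$ to be fixed. For a pair $x\neq y\in A=\overline{B}(0,1)\setminus B(z,r)$, every dyadic cell visited in the chain joining $x$ to $y$ at scales $\le$ some $k_0(\rho,r)$ is at Euclidean distance at least, say, $r/2$ from $z$, hence is disjoint from $\overline{B}(z,\rho)$ as soon as $2^{-k_0}<r/4$ and $\rho<r/4$; so for those scales the restriction "roads avoiding $\overline{B}(z,\rho)$" is automatic and costs nothing. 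The finitely many coarse scales $k<k_0$ contribute a bounded number of segments; for these I would cover the (compact, bounded-away-from-$z$) region $A$ by finitely many balls and, on each, use a fixed fast road avoiding $\overline{B}(z,\rho)$ whose existence, again by \eqref{eq:normgeo} and the fact that such balls have positive distance to $z$, has probability bounded below — this is where $\rho$ is chosen small enough, depending only on $z$ and $r$, so that with positive probability each of these finitely many balls is crossed by a road of speed $\ge$ some fixed constant avoiding $\overline{B}(z,\rho)$; failing that, one pays at most a deterministic constant (the Euclidean diameter of $\overline{B}(0,1)$ divided by the infimal relevant speed) absorbed into $\Gamma$. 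Taking $\Gamma$ to be the resulting random constant, it is by construction a measurable function of the atoms $(\ell,v)$ of $\Pi$ with $v<1$, $\ell\cap\overline{B}(0,2)\neq\emptyset$, $\ell\cap\overline{B}(z,\rho)=\emptyset$ — these are the only atoms the chaining ever inspects — which is the stated measurability.

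For the tail bound, I would note that $\Gamma$ is built as a sum (over scales) of contributions, each of which is either deterministic or an inverse-speed of a road required to exist in a given region: by \eqref{eq:normgeo}, the event "there is no road of speed $\ge v$ crossing a given cell (and avoiding $\overline{B}(z,\rho)$)" has probability $\le \exp(-c'\,r_{\mathrm{cell}}^{d-1} v^{-(\beta-1)})$, so the speed used at scale $k$ is, with overwhelming probability, of the right order $2^{-k(d-1)/(\beta-1)}$, and a standard union-bound-over-scales argument (exactly as in \cite{moa1fractal}) yields $\mathbb{P}(\Gamma>t)\le C\exp(-c\,t^{\beta-1})$, with the deletion of $\overline{B}(z,\rho)$ and the restriction to $v<1$ only changing the constants $C,c$. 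The main obstacle, and the only place requiring care, is the bookkeeping at the finitely many coarse scales: one must check that removing roads through $\overline{B}(z,\rho)$ does not disconnect $A$ at those scales and that the replacement detour around $\overline{B}(z,\rho)$ — whose length is controlled because $A$ stays at Euclidean distance $\ge r - \rho$ from $z$ — costs only a bounded amount of driving time; this is what pins down the choice of $\rho\in{]0,r[}$ and makes the argument essentially identical to, rather than strictly harder than, the already-known global bounds.
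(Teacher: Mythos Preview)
Your overall strategy---chaining across scales, restricting to slow roads, and controlling the random constant via union bounds---is the right one and matches the paper's approach. But there is a genuine gap in the central step.

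You write that at fine scales the dyadic cells visited by the chain are at Euclidean distance at least $r/2$ from $z$, hence disjoint from $\overline{B}(z,\rho)$, and conclude that ``the restriction `roads avoiding $\overline{B}(z,\rho)$' is automatic and costs nothing''. This does not follow. Roads are full affine lines: a line passing through a small cell at distance $r/2$ from $z$ may perfectly well also pass through $\overline{B}(z,\rho)$. The cell being disjoint from the ball says nothing about lines through the cell being disjoint from the ball. So the restriction is \emph{never} automatic, at any scale, and your argument as written does not produce a random variable measurable with respect to $\Pi_\rho$.

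What is actually needed is a quantitative estimate: for a cell $\overline{B}(w,s)$ at distance $\ge r/2$ from $z$, the $\mu$-measure of lines hitting both $\overline{B}(w,s)$ and $\overline{B}(z,\rho)$ is of order $s^{d-1}\rho^{d-1}/|w-z|^{d-1}$ by the two-ball estimate \eqref{eq:twopointmugeo}, whereas the $\mu$-measure of lines through the two chaining cells is of order $s^{d-1}$ times a factor depending only on $\alpha$. Choosing $\rho$ small enough (depending on $r$ and on the chaining parameter $\alpha$) makes the first at most half the second, so the Poisson parameter governing ``there exists a road of speed $\ge v$ through both cells and avoiding $\overline{B}(z,\rho)$'' is at least half of the unrestricted one, uniformly over all scales and all pairs of cells in the chain. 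This is the content that pins down $\rho$---not the coarse-scale detour you describe, which is a secondary issue. Once you have this, the rest of your sketch (summing over scales, tail bound via union bound) goes through as in \cite{moa1fractal} and \cite{kahn}.
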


For the convenience of the reader, we provide a proof of this ``folklore'' result in the appendix, see Section \ref{sec:appendix}.
Our second tool is the following elementary geometric result, which we use to approximate a circular arc by a finite sequence of line segments.

\begin{lem}\label{lem:confluencetool2}
Fix $\rho\in{]0,1[}$.
There exists an integer $K=K(\rho)\in\mathbb{N}^*$ and a constant $C$ such that the following holds: for every $x,y\in\partial B(0,1)$, there exists a sequence $x=z_0,z_1,\ldots,z_k=y$, with ${z_0\neq z_1\neq\ldots\neq z_k\in\partial B(0,1)}$ and $k\leq K$, such that $d(0;[z_{i-1},z_i])>\rho$ for all ${i\in\llbracket1,k\rrbracket}$ and ${|z_0-z_1|+\ldots+|z_{k-1}-z|\leq C}$.
\end{lem}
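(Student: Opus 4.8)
The plan is to prove Lemma \ref{lem:confluencetool2} by an explicit construction, first treating the case where $x$ and $y$ are close together on the circle, and then handling the general case by concatenating a bounded number of such short arcs. The key geometric observation is that a \emph{single} chord $[x,y]$ of the unit circle stays at distance $\sqrt{1-|x-y|^2/4}$ from the origin, which is $>\rho$ as soon as $|x-y|<2\sqrt{1-\rho^2}$. Hence, if $x$ and $y$ are at angular distance at most some fixed $\theta_0=\theta_0(\rho)>0$, we may simply take $k=1$ and $z_0=x$, $z_1=y$, and the length bound $|z_0-z_1|\le 2$ holds trivially. The only real issue is that for \emph{antipodal or nearly antipodal} points the straight chord passes through (or too close to) the origin, so we must go around.

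Next I would handle the general case. Fix $\theta_0=\theta_0(\rho)\in{]0,\pi[}$ with $2\sin(\theta_0/2)<\sqrt{1-\rho^2}$, say, so that any chord subtending an angle $\le\theta_0$ stays at distance $>\rho$ from $0$ (I will take $\theta_0$ a bit smaller to be safe, since we also need the intermediate points to be distinct and the chords to genuinely avoid $\overline B(0,\rho)$, not just its boundary). Given arbitrary $x,y\in\partial B(0,1)$, let $\alpha\in[0,2\pi)$ be the angle one must rotate (say counterclockwise) to go from $x$ to $y$ along the circle. Subdivide the circular arc from $x$ to $y$ into $k:=\lceil \alpha/\theta_0\rceil$ equal sub-arcs, with endpoints $x=z_0,z_1,\dots,z_k=y$ placed at equal angular spacing $\alpha/k\le\theta_0$ along $\partial B(0,1)$. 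Each chord $[z_{i-1},z_i]$ subtends an angle $\le\theta_0$, hence $d(0;[z_{i-1},z_i])>\rho$. The number of pieces satisfies $k\le \lceil 2\pi/\theta_0\rceil=:K(\rho)$, a deterministic integer depending only on $\rho$. For the total length, each chord has Euclidean length $2\sin(\alpha/(2k))\le \alpha/k$, so $\sum_{i=1}^k|z_{i-1}-z_i|\le \alpha\le 2\pi=:C$. It remains only to ensure the points are pairwise distinct in the sense $z_0\ne z_1\ne\cdots\ne z_k$; since consecutive points are at positive angular distance $\alpha/k>0$ this is automatic as long as $x\ne y$ (if $x=y$ we take $k=1$ and perturb, or rather observe the statement is about $x,y\in\partial B(0,1)$ without requiring $x\ne y$ — if $x=y$ take $k=2$ with $z_1$ the antipode's neighbour, or simply note one can always reduce to $x\ne y$).

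I do not anticipate a serious obstacle here: the lemma is genuinely elementary, and the construction above is essentially forced. The one point requiring a little care is the \emph{explicit bookkeeping} of how small $\theta_0$ must be so that the chords avoid the \emph{closed} ball $\overline B(0,\rho)$ with room to spare (this matters because in the applications one wants geodesics to be forced through a specific road, away from a neighbourhood of an excluded point), but this is just a matter of choosing $\theta_0$ with $\cos(\theta_0/2)>\rho$ strictly, which is possible since $\rho<1$. Concretely, with $\theta_0 := \arccos\bigl((1+\rho)/2\bigr)$ one gets $\cos(\theta_0/2)\ge\cos\theta_0 = (1+\rho)/2 > \rho$, so every chord subtending angle $\le\theta_0$ has distance to $0$ at least $\cos(\theta_0/2)>\rho$; then $K(\rho):=\lceil 2\pi/\theta_0\rceil$ and $C:=2\pi$ work. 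I would present the proof in exactly this order: (i) record the chord-distance formula and fix $\theta_0$; (ii) define $k$ and the equally spaced points; (iii) verify the three conclusions (distance bound, $k\le K$, length bound) in one line each.
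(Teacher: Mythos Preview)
Your proposal is correct and is essentially what the paper has in mind: the paper simply writes ``The proof is immediate (the constant $C$ can be taken to be $\pi$)'' and gives no further details, so you are filling in exactly the elementary chord-subdivision argument the authors are alluding to. The only minor difference is that by always going counterclockwise you may traverse an arc of angle up to $2\pi$ and hence get $C=2\pi$; if instead you take the \emph{shorter} of the two arcs between $x$ and $y$ (angle $\alpha\in[0,\pi]$), the same construction gives $k\le\lceil\pi/\theta_0\rceil$ and total chord length $\le\alpha\le\pi$, matching the paper's stated constant.
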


The proof is immediate {(the constant $C$ can be taken to be $\pi$).}

\subsection{Around typical points}\label{subsec:confluencetyp}

In this subsection, we prove item \ref{item:confluencetyp} of Proposition \ref{prop:confluence}.
Following the general strategy outlined in the previous subsection, we first construct a ``master'' good event $G$ on which the desired confluence of geodesics property holds at a fixed scale around some target point $x$.
See Figure \ref{fig:confluencetyp} for an illustration.
The construction is based on an idea from Aldous \cite[Figure 6]{aldousscale}, which was already used by Kendall for this purpose in the planar case $d=2$ \cite[Figure 10]{kendall}.

\begin{lem}\label{lem:confluencetypmaster}
There exists a constant $r\in{]0,1/3[}$ and a good event $G$ that has positive probability, such that on $G$, there exists a cut point $z\in\left.\overline{B}(0,2/3)\middle\backslash\overline{B}(e_1/3,r)\right.$ such that any geodesic $V$-path from a point inside $\overline{B}(e_1/3,r)$ to a point outside $\overline{B}(0,2/3)$ must pass through $z$.

A fortiori, on $G$, there exists a cut point $z\in\left.\overline{B}(e_1/3,1)\middle\backslash\overline{B}(e_1/3,r)\right.$ such that any geodesic $V$-path from a point inside $\overline{B}(e_1/3,r)$ to a point outside $\overline{B}(e_1/3,1)$ must pass through $z$.
\end{lem}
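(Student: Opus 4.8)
The plan is to build the master good event $G$ by hand, realising concretely the picture from Aldous \cite[Figure 6]{aldousscale} and Kendall \cite[Figure 10]{kendall}. We work at the fixed scale of $\overline{B}(0,1)$, with target point $x=e_1/3$. The idea is to force a single ``bottleneck'' point $z$ through which every geodesic $V$-path from inside $\overline{B}(e_1/3,r)$ to outside $\overline{B}(0,2/3)$ must pass. To do this, I would first choose a fast road $(\ell_\star,v_\star)\in\Pi$ with very large speed $v_\star$ passing close to $e_1/3$ but not through it — say passing through a small ball $\overline{B}(w_\star,r_\star)$ disjoint from $\overline{B}(e_1/3,r)$ for a suitable small $r<r_\star$ — and crucially, one can ask that this be the \emph{only} road of speed $\geq v_0$ (for some threshold $v_0$) hitting a neighbourhood of $\overline{B}(0,2/3)$; both requests have positive probability by \eqref{eq:normgeo} and independence of $\Pi$ over disjoint speed ranges. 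The point $z$ will be the unique point where a geodesic from $e_1/3$ first accesses $\ell_\star$.

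Next I would control the metric \emph{off} $\ell_\star$ using Lemma \ref{lem:confluencetool1}: applying it with the excluded ball taken around a point of $\ell_\star$ near $e_1/3$, on an event of positive probability the constant $\Gamma$ is bounded by some fixed $\Gamma_0$, so that $T(u,u')\leq\Gamma_0\cdot|u-u'|^{(\beta-d)/(\beta-1)}\cdot\ln(4/|u-u'|)^{1/(\beta-1)}$ for all $u,u'$ in the relevant annular region, uniformly over the slow roads. Intersecting this with the two positive-probability events above gives $G$. The upshot is a quantitative comparison: travelling from $\overline{B}(e_1/3,r)$ to $\partial\overline{B}(0,2/3)$ \emph{without} using $\ell_\star$ costs at least some fixed $c_1>0$ (lower bound: any $V$-path not touching $\ell_\star$ stays in the slow network, and a ball-covering/\eqref{eq:normgeo}-type estimate bounds its speed, hence bounds $T$ from below away from $\ell_\star$ — this is the standard lower bound for $T$, which I would either cite or sketch), whereas the route ``drive (slowly) from $x$ to a well-chosen access point of $\ell_\star$, ride $\ell_\star$, then drive (slowly) back out'' costs less, because the $\ell_\star$-segment is essentially free when $v_\star$ is huge and the two slow legs are short; choosing $v_\star$ large and $r$ small makes this cheaper route strictly optimal. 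Then \emph{every} geodesic from inside $\overline{B}(e_1/3,r)$ to outside $\overline{B}(0,2/3)$ must use $\ell_\star$, and by the interval structure of $\gamma\cap\ell_\star$ (recalled in Section \ref{sec:rappelsgeo}) together with uniqueness-type confluence, the first entry point into $\ell_\star$ is forced to be a single common point $z$; that $z$ is then a cut point and lies in $\overline{B}(0,2/3)\setminus\overline{B}(e_1/3,r)$ by construction. The ``a fortiori'' clause is immediate since $\overline{B}(0,2/3)\subset\overline{B}(e_1/3,1)$, so any $V$-path exiting $\overline{B}(e_1/3,1)$ a fortiori exits $\overline{B}(0,2/3)$, and we keep the same $z$.

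The main obstacle I anticipate is the \emph{geometric uniqueness of the bottleneck point} $z$: it is not enough that all geodesics use $\ell_\star$; I must rule out that two geodesics access $\ell_\star$ at two different points and never meet, or that a geodesic uses $\ell_\star$ along a nontrivial segment with a ``free'' choice of entry. This is handled by making the off-$\ell_\star$ metric strictly convex-like in the relevant region via the Hölder bound of Lemma \ref{lem:confluencetool1} and by exploiting that the optimal access point to a straight fast road from a fixed region is generically unique (a strictly convex optimisation over the line $\ell_\star$, once the surrounding slow metric is fixed by the good event) — but one has to be careful to phrase this robustly, since the slow metric is itself random; the clean way is to first fix the access point $z_0$ as the minimiser of $u\mapsto T(e_1/3,u)$ over $\ell_\star\cap\overline{B}(0,2/3)$, show on $G$ that this minimiser is unique and strictly beats any non-$\ell_\star$ route as well as any other access point (using that detouring costs a definite amount by the lower bound on $T$), and then conclude $z=z_0$ for \emph{all} geodesics. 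A secondary, more bookkeeping-type difficulty is making the four positive-probability events genuinely compatible (disjoint supports of the Poisson process for the ``fast road exists / is unique'' parts, versus the ``slow metric is regular'' part), which is routine given independence of $\Pi$ over disjoint regions of $\mathbb{L}\times\mathbb{R}_+^*$.
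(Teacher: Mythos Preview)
Your overall architecture is right: build $G$ so that a single fast road $(\ell_\star,v_\star)$ is the only fast road crossing the annulus, control the slow metric by Lemma~\ref{lem:confluencetool1}, and conclude that every geodesic must ride $\ell_\star$. That is exactly what the paper does (their $\mathsf{Desert}$/$\mathsf{Junction}$ events, plus $\mathsf{Inner}$/$\mathsf{Outer}$ ring roads and the $\Gamma\leq t$ bound). The ``a fortiori'' clause is also correct as you state it.

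The genuine gap is your identification of the cut point. You propose $z$ to be the \emph{first entry point} of a geodesic into $\ell_\star$, and you correctly flag this as the main obstacle --- but your proposed fix (uniqueness of the minimiser of $u\mapsto T(e_1/3,u)$ over $\ell_\star$ via ``strict convexity'' of the slow metric) does not work. First, geodesics in the statement start from \emph{arbitrary} points of $\overline{B}(e_1/3,r)$, not from $e_1/3$, so even a unique access point from $e_1/3$ would be irrelevant. Second, there is no strict-convexity property of $T$ available that would force a unique minimiser for a single starting point: the slow metric is built from a random collection of straight roads and is not known to be strictly convex in any useful sense.

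The paper avoids this issue entirely by \emph{not} using the entry point as the cut point. Instead, it arranges (via the $\mathsf{Inner}$/$\mathsf{Outer}$ ring roads and explicit numerical estimates) that every geodesic enters $\ell_\star$ at some point $x_0\in\ell_\star\cap\overline{B}(e_1/3,1/9)$ and leaves it at some point $y_0\in\ell_\star\cap\overline{B}(2e_1/3,1/9)$. The entry/exit points may differ from one geodesic to another, but the traversed segment $[x_0,y_0]\subset\ell_\star$ always contains a fixed middle point, namely the orthogonal projection of $e_1/2$ onto $\ell_\star$. That projection is the cut point $z$. In other words: you do not need uniqueness of the access point; you need all geodesics to use a \emph{long overlapping segment} of $\ell_\star$, and then take $z$ anywhere in the common intersection. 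This is the missing idea in your proposal.
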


\begin{figure}[!ht]
\centering
\includegraphics[width=0.6\linewidth]{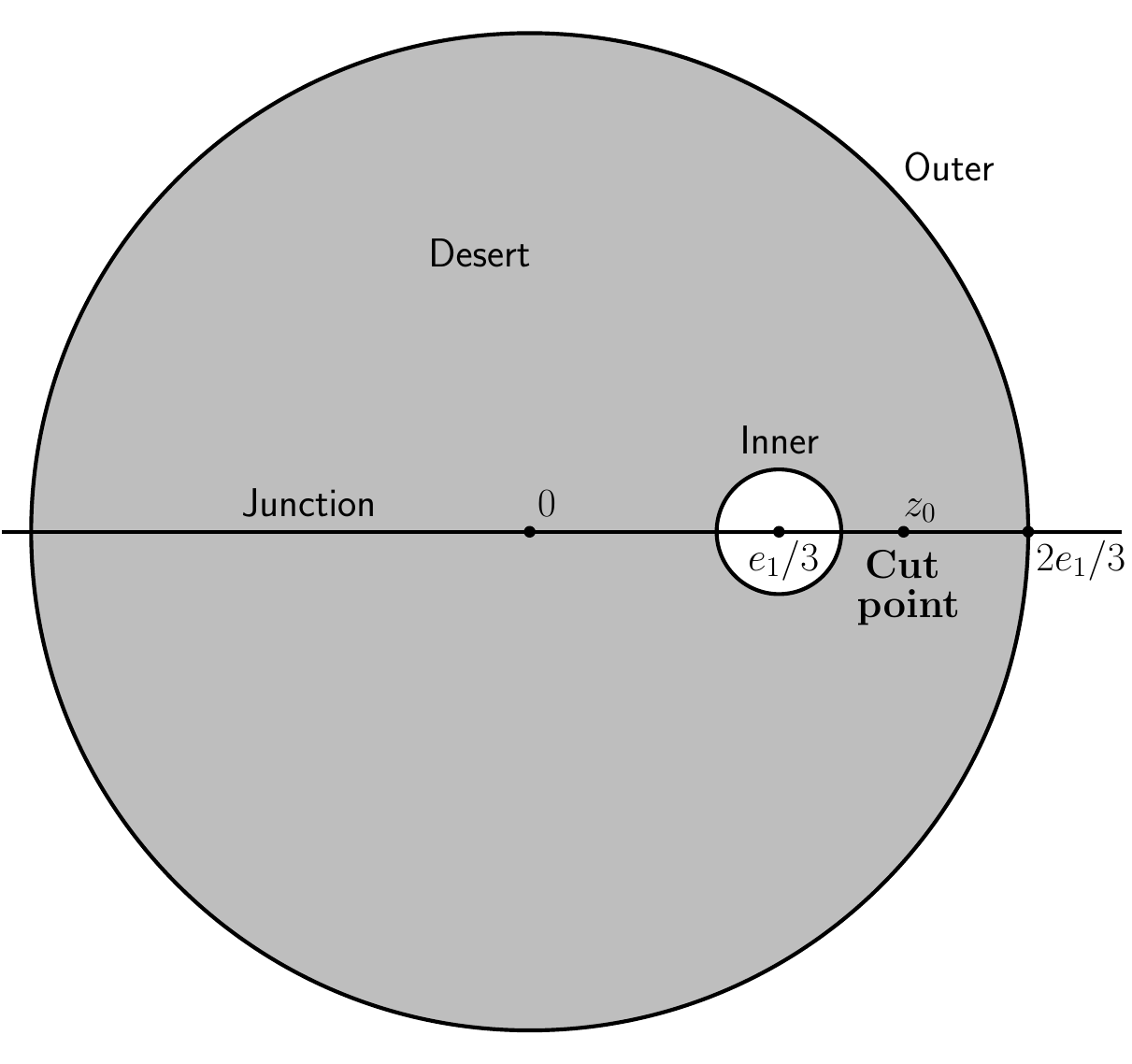}
\caption{Sketch of the good event $G$ of Lemma \ref{lem:confluencetypmaster}.
Picture $\mathsf{Desert}$ as an area with a poor road system, crossed by a single fast road labeled $\mathsf{Junction}$.
Furthermore, picture $\mathsf{Inner}$ and $\mathsf{Outer}$ as ring roads (constructed out of multiple roads of $\Pi$), whose circuit has negligible driving time in comparison with the driving time from $\mathsf{Inner}$ to $\mathsf{Outer}$ using the road $\mathsf{Junction}$.
Then, because of the dissymmetry, any geodesic from a point inside $\mathsf{Inner}$ to a point outside $\mathsf{Outer}$ must use the road $\mathsf{Junction}$ from a point near $e_1/3$ to a point near $2e_1/3$.
The precise endpoints might differ from one geodesic to another, but all geodesics must pass through a certain cut point $z$ between $e_1/3$ and $2e_1/3$.}\label{fig:confluencetyp}
\end{figure}

\begin{proof}[Proof of Lemma \ref{lem:confluencetypmaster}] 
The construction of $G$ is sketched in  Figure \ref{fig:confluencetyp}.
We now embark in the {rather lenghty but not difficult construction of this event in detail.}
By Lemma \ref{lem:confluencetool1}, there exists a random variable $\Gamma$, measurable with respect to the restriction of $\Pi$ to the set of roads with speed less than $1$ that pass through $\overline{B}(0,2)$, such that
\[T(x,y)\leq\Gamma\cdot|x-y|^{(\beta-d)/(\beta-1)}\cdot\ln\left(\frac{4}{|x-y|}\right)^{1/(\beta-1)}\quad\text{for all $x\neq y\in\overline{B}(0,1)$.}\]
Moreover, there exists constants $C$ and $c$ such that
\[\mathbb{P}(\Gamma>t)\leq C\cdot\exp\left[-c\cdot t^{\beta-1}\right]\quad\text{for all $t\in\mathbb{R}_+^*$.}\]
In particular, we can fix $t\in\mathbb{R}_+^*$ large enough so that $\mathbb{P}(\Gamma>t)\leq1/2$.
Next, fix ${r\in{]0,1/432]}}$.
By Lemma \ref{lem:confluencetool2}, there exists an integer $K=K(r)$ and a constant $C$ such that the following holds: for every ${x,y\in\partial B(0,2/3)}$, there exists a sequence $x=z_0,z_1,\ldots,z_k=y$, with 
\[z_0\neq z_1\neq\ldots\neq z_k\in\partial B\left(0,\frac{2}{3}\right)\]
and $k\leq K$, such that $d(0;[z_{i-1},z_i])>2/3-r$ for all $i\in\llbracket1,k\rrbracket$ and
\[|z_0-z_1|+\ldots+|z_{k-1}-z_k|\leq C.\]
Then, fix $\lambda\geq25$ large enough so that $C/\lambda\leq1/216$, and fix $\delta\in{]0,r]}$ small enough so that
\begin{equation}\label{eq:delta}
2(K+2)\cdot t\cdot\omega(\delta)+2K\cdot\frac{\delta}{\lambda}\leq\frac{1}{216}
\end{equation}
where $\omega(\delta)=\sup_{s\in{]0,2\delta]}}s^{(\beta-d)/(\beta-1)}\cdot\ln(4/s)^{1/(\beta-1)}$.
We proceed as follows.
\begin{itemize}
\item Fix a finite covering $(B(w,\delta)\,;\,w\in W_\mathsf{in})$ of $\partial B(e_1/3,r)$ by balls of radius $\delta$, with centres ${w\in\partial B(e_1/3,r)}$.
Then, set $\mathsf{Inner}$ to be the event: ``for every ${w_1\neq w_2\in W_\mathsf{in}}$, there exists a road $(\ell,v)\in\Pi$ with speed $v\in[1,2[$ that passes through $\overline{B}(w_1,\delta)$ and $\overline{B}(w_2,\delta)$''.

\item Fix a finite covering $(B(w,\delta)\,;\,w\in W_\mathsf{out})$ of $\partial B(0,2/3)$ by balls of radius $\delta$, with centres $w\in\partial B(0,2/3)$.
Then, set $\mathsf{Outer}$ to be the event: ``for every ${w_1\neq w_2\in W_\mathsf{out}}$ such that ${\mu\left(\left\langle\overline{B}(w_1,\delta)\,;\,\overline{B}(w_2,\delta)\right\rangle\middle\backslash\left\langle\overline{B}(0,2/3-r)\right\rangle\right)>0}$, there exists a road of $\Pi$ with speed at least $\lambda$ that passes through $\overline{B}(w_1,\delta)$ and $\overline{B}(w_2,\delta)$ but does not pass through ${\overline{B}(0,2/3-r)}$''.
\end{itemize}
Finally, we introduce the event $\mathsf{Junction}$: ``there exists a road $(\ell,v)\in\Pi$ with speed ${v\in[24,25[}$ that passes through $\overline{B}(-2e_1/3,\delta)$ and $\overline{B}(2e_1/3,\delta)$'', and the event $\mathsf{Desert}$: ``there is no more than one road of $\Pi$ with speed at least $2$ that passes through $\overline{B}(0,2/3-r)$''.
Then, we let
\[G=(\Gamma\leq t)\cap\mathsf{Inner}\cap\mathsf{Outer}\cap\mathsf{Junction}\cap\mathsf{Desert}.\]
We claim that on $G$, the following holds.
\begin{enumerate}
\item First, there exists a unique road $(\ell_0,v_0)\in\Pi$ with speed $v_0\geq2$ that passes through $\overline{B}(0,2/3-r)$.
Moreover, this road has speed $v_0\in[24,25[$, and passes through $\overline{B}(-2e_1/3,\delta)$ and $\overline{B}(2e_1/3,\delta)$.

\item For every $x\in\partial B(e_1/3,r)$ and $y\in\partial B(0,2/3)$, we have $T(x,y)\leq1/36$.

\item For any geodesic $V$-path $\gamma$ from a point inside $\overline{B}(e_1/3,r)$ to a point outside $\overline{B}(0,2/3)$, there exists $x_0\in\ell_0\cap\overline{B}(e_1/3,1/9)$ and $y_0\in\ell_0\cap\overline{B}(2e_1/3,1/9)$ such that the $V$-path $\gamma$ drives from $x_0$ to $y_0$ using the road $(\ell_0,v_0)$.
In particular, any such path must pass through the cut point $z$ defined as the orthogonal projection of $e_1/2$ on $\ell_0$.
\end{enumerate}
This yields the result of the lemma, provided that $G$ has positive probability.
First, let us check these assertions.
\begin{enumerate}
\item The first assertion is immediate from the construction of $G$.

\item Fix $x\in\partial B(e_1/3,r)$ and $y\in\partial B(0,2/3)$, and let us check that ${T(x,y)\leq1/36}$.
By the triangle inequality, we have
\[T(x,y)\leq T\left(x,\frac{e_1}{3}+re_1\right)+T\left(\frac{e_1}{3}+re_1,\frac{2e_1}{3}\right)+T\left(\frac{2e_1}{3},y\right).\]
Now, let us control each term separately, starting with the first one.
Fix ${w_1,w_2\in W_\mathsf{in}}$ such that $x\in B(w_1,\delta)$ and $e_1/3+re_1\in B(w_2,\delta)$.
If $w_1=w_2$, then we have 
\[\left|x-\left(\frac{e_1}{3}+re_1\right)\right|\leq2\delta,\]
hence
\[T\left(x,\frac{e_1}{3}+re_1\right)\leq t\cdot\omega(\delta).\]
Otherwise, there exists a road $(\ell,v)\in\Pi$ with speed $v\in[1,2[$ that passes through $\overline{B}(w_1,\delta)$ and $\overline{B}(w_2,\delta)$.
Let us denote by $w_1'$ (resp. $w_2'$) the orthogonal projection of $w_1$ (resp. $w_2$) on $\ell$.
By convexity, we have $w_1'\in\overline{B}(w_1,\delta)$ and $w_2'\in\overline{B}(w_2,\delta)$, hence $|x-w_1'|\leq2\delta$ and $|w_2'-(e_1/3+re_1)|\leq2\delta$.
By the triangle inequality, it follows that
\[\begin{split}
T\left(x,\frac{e_1}{3}+re_1\right)&\leq T\left(x,w_1'\right)+T\left(w_1',w_2'\right)+T\left(w_2',\frac{e_1}{3}+re_1\right)\\
&\leq t\cdot\omega(\delta)+\frac{\left|w_1'-w_2'\right|}{v}+t\cdot\omega(\delta)\\
&\leq t\cdot\omega(\delta)+\frac{|w_1-w_2|}{1}+t\cdot\omega(\delta)\\
&\leq t\cdot\omega(\delta)+\frac{2r}{1}+t\cdot\omega(\delta).
\end{split}\]
(We got $|w_1'-w_2'|\leq|w_1-w_2|$ from the $1$-Lipschitz continuity of the orthogonal projection on $\ell$, and $|w_1-w_2|\leq2r$ since $w_1,w_2\in\overline{B}(e_1/3,r)$.)
In any case, we have
\begin{equation}\label{eq:inner}
T\left(x,\frac{e_1}{3}+re_1\right)\leq2r+2\cdot t\cdot\omega(\delta).
\end{equation}
Now, let us bound the second term, namely $T(e_1/3+re_1,2e_1/3)$.
By convexity, the road $(\ell_0,v_0)$ passes through ${\overline{B}(e_1/3+re_1,\delta)}$ and ${\overline{B}(2e_1/3,\delta)}$.
Let us denote by $x_0$ (resp. $y_0$) the orthogonal projection of $e_1/3+re_1$ (resp. $2e_1/3$) on $\ell_0$.
By the triangle inequality, we have
\[\begin{split}
T\left(\frac{e_1}{3}+re_1,\frac{2e_1}{3}\right)&\leq T\left(\frac{e_1}{3}+re_1,x_0\right)+T(x_0,y_0)+T\left(y_0,\frac{2e_1}{3}\right)\\
&\leq t\cdot\omega(\delta)+\frac{|x_0-y_0|}{v_0}+t\cdot\omega(\delta)\\
&\leq t\cdot\omega(\delta)+\frac{|(e_1/3+re_1)-2e_1/3|}{24}+t\cdot\omega(\delta)\\
&\leq t\cdot\omega(\delta)+\frac{1/3}{24}+t\cdot\omega(\delta).
\end{split}\]
Thus, we obtain
\begin{equation}\label{eq:junction}
T\left(\frac{e_1}{3}+re_1,\frac{2e_1}{3}\right)\leq\frac{1}{72}+2\cdot t\cdot\omega(\delta).
\end{equation}
Finally, let us bound the last term, namely $T(2e_1/3,y)$.
By construction, there exists a sequence ${2e_1/3=z_0,z_1,\ldots,z_k=y}$, with ${z_0\neq z_1\neq\ldots\neq z_k\in\partial B(0,2/3)}$ and $k\leq K$, such that $d(0;[z_{i-1},z_i])>2/3-r$ for all $i\in\llbracket1,k\rrbracket$ and ${|z_0-z_1|+\ldots+|z_{k-1}-z_k|\leq C}$.
For each $i\in\llbracket0,k\rrbracket$, fix ${w_i\in W_\mathsf{out}}$ such that $z_i\in B(w_i,\delta)$.
Now, fix $i\in\llbracket1,k\rrbracket$, and let us bound $T(z_{i-1},z_i)$.
If $w_{i-1}=w_i$, then we have $|z_{i-1}-z_i|\leq2\delta$, hence ${T(z_{i-1},z_i)\leq t\cdot\omega(\delta)}$.
Otherwise, the line that passes through $z_{i-1}$ and $z_i$ passes through $\overline{B}(w_{i-1},\delta)$ and $\overline{B}(w_i,\delta)$ but does not pass through ${\overline{B}(0,2/3-r)}$, and the same holds for any small perturbation of that line.
This shows that 
\[\mu\left(\left\langle\overline{B}(w_{i-1},\delta)\,;\,\overline{B}(w_i,\delta)\right\rangle\middle\backslash\left\langle\overline{B}(0,2/3-r)\right\rangle\right)>0,\]
hence there exists a road $(\ell_i,v_i)\in\Pi$ with speed at least $\lambda$ that passes through $\overline{B}(w_{i-1},\delta)$ and $\overline{B}(w_i,\delta)$ but does not pass through ${\overline{B}(0,2/3-r)}$.
Let us denote by $w_{i-1}'$ (resp. $w_i'$) the orthogonal projection of $w_{i-1}$ (resp. $w_i$) on $\ell_i$.
By convexity, we have $w_{i-1}'\in\overline{B}(w_i,\delta)$ and ${w_i'\in\overline{B}(w_i,\delta)}$, hence $\left|z_{i-1}-w_{i-1}'\right|\leq2\delta$ and $|w_i'-z_i|\leq2\delta$.
By the triangle inequality, it follows that
\[\begin{split}
T(z_{i-1},z_i)&\leq T\left(z_{i-1},w_{i-1}'\right)+T\left(w_{i-1}',w_i'\right)+T\left(w_i',z_i\right)\\
&\leq t\cdot\omega(\delta)+\frac{\left|w_{i-1}'-w_i'\right|}{v_i}+t\cdot\omega(\delta)\\
&\leq t\cdot\omega(\delta)+\frac{|w_{i-1}-w_i|}{\lambda}+t\cdot\omega(\delta)\\
&\leq t\cdot\omega(\delta)+\frac{\delta+|z_{i-1}-z_i|+\delta}{\lambda}+t\cdot\omega(\delta)\\
&=\frac{|z_{i-1}-z_i|}{\lambda}+2\cdot\left(t\cdot\omega(\delta)+\frac{\delta}{\lambda}\right).
\end{split}\]
Summing over $i\in\llbracket1,k\rrbracket$, we deduce that
\begin{equation}\label{eq:outer}
T\left(\frac{2e_1}{3},y\right)\leq\frac{C}{\lambda}+k\cdot2\cdot\left(t\cdot\omega(\delta)+\frac{\delta}{\lambda}\right)\leq\frac{C}{\lambda}+2K\cdot\left(t\cdot\omega(\delta)+\frac{\delta}{\lambda}\right).
\end{equation}
Putting together \eqref{eq:inner}, \eqref{eq:junction} and \eqref{eq:outer}, we obtain
\[T(x,y)\leq2r+\frac{1}{72}+\frac{C}{\lambda}+2(K+2)\cdot t\cdot\omega(\delta)+2K\cdot\frac{\delta}{\lambda}.\]
Recalling \eqref{eq:delta}, we end up with
\[T(x,y)\leq\frac{2}{432}+\frac{1}{72}+\frac{1}{216}+\frac{1}{216}=\frac{1}{36},\]
as we wanted to show.

\item Fix a geodesic $V$-path $\gamma:[0,T]\rightarrow\mathbb{R}^d$ from a point inside $\overline{B}(e_1/3,r)$ to a point outside $\overline{B}(0,2/3)$, and let us check that there exists 
\[x_0\in\ell_0\cap\overline{B}\left(\frac{e_1}{3},\frac{1}{9}\right)\quad\text{and}\quad y_0\in\ell_0\cap\overline{B}\left(\frac{2e_1}{3},\frac{1}{9}\right)\]
such that $\gamma$ drives from $x_0$ to $y_0$ using the road $(\ell_0,v_0)$.
Let ${x\in\partial B(e_1/3,r)}$ be the first exit point of $\gamma$ from $\overline{B}(e_1/3,r)$, and let $y\in\partial B(0,2/3-r)$ be the first exit point of $\gamma$ from $\overline{B}(0,2/3-r)$.
We claim that $T(x,y)\leq1/36$.
Indeed, let $z\in\partial B(0,2/3)$ be the first exist point of $\gamma$ from $\overline{B}(0,2/3)$.
We have $T(x,y)\leq T(x,z)$, and by the previous point, we have ${T(x,z)\leq1/36}$.
On the other hand, by the triangle inequality, we have
\[|x-y|\geq|y|-|x|\geq\frac{2}{3}-r-\left(\frac{1}{3}+r\right)=\frac{1}{3}-2r\geq\frac{1}{3}-\frac{2}{432}=\frac{71}{216}.\]
It follows that
\[\frac{|x-y|}{T(x,y)}\geq\frac{71/216}{1/36}=\frac{71}{6}\geq2,\]
hence the $V$-path $\gamma$ must use a road of $\Pi$ with speed at least $2$ to drive from $x$ to $y$.
Since $(\ell_0,v_0)$ is the only road of $\Pi$ with speed at least $2$ that passes through $\overline{B}(0,2/3-r)$, this road cannot be any other than $(\ell_0,v_0)$.
Moreover, since $(\ell_0,v_0)$ is the fastest road of $\Pi$ within $\overline{B}(0,2/3-r)$, the $V$-path $\gamma$ uses $(\ell_0,v_0)$ over a time interval, say $[a,b]\subset[0,T]$.
We set $x_0=\gamma(a)$ and $y_0=\gamma(b)$.
Now, we want to show that $x_0\in\overline{B}(e_1/3,1/9)$ and $y_0\in\overline{B}(2e_1/3,1/9)$.
To this end, note that we have
\[T(x,y)\geq\frac{|x-x_0|}{2}+\frac{|x_0-y_0|}{v_0}+\frac{|y_0-y|}{2},\]
hence
\[\frac{|x-x_0|}{2}+\frac{|x_0-y_0|}{25}+\frac{|y_0-y|}{2}\leq\frac{1}{36}.\]
In particular, we have $|x-x_0|\leq1/18$ and $|y_0-y|\leq1/18$, and $|x_0-y_0|\leq25/36$.
By the triangle inequality, it follows that
\[\left|x_0-\frac{e_1}{3}\right|\leq|x_0-x|+\left|x-\frac{e_1}{3}\right|\leq\frac{1}{18}+r\leq\frac{1}{18}+\frac{1}{432}=\frac{25}{432}\leq\frac{1}{9}.\]
Next, we use the bound $|y_0-y|\leq1/18$ to show that 
\[y_0\in\overline{B}\left(-\frac{2e_1}{3},\frac{1}{9}\right)\cup\overline{B}\left(\frac{2e_1}{3},\frac{1}{9}\right).\]
First, since $\ell_0$ passes through $\overline{B}(-2e_1/3,\delta)$ and $\overline{B}(2e_1/3,\delta)$, by convexity we must have ${y_0\in\overline{B}(w,\delta)}$ for some $w\in[-2e_1/3,2e_1/3]$.
Then, by the triangle inequality, we have
\[|w|\geq|y_0|-\delta\geq|y|-|y-y_0|-\delta\geq\frac{2}{3}-r-\frac{1}{18}-\delta\geq\frac{2}{3}-\left(\frac{1}{18}+2r\right).\]
Therefore, we must have 
\[w\in\overline{B}\left(-\frac{2e_1}{3},\frac{1}{18}+2r\right)\cup\overline{B}\left(\frac{2e_1}{3},\frac{1}{18}+2r\right).\]
Finally, since $|y_0-w|\leq\delta$, and since
\[\frac{1}{18}+2r+\delta\leq\frac{1}{18}+3r\leq\frac{1}{18}+\frac{3}{432}=\frac{27}{432}\leq\frac{1}{9},\]
we conclude that ${y_0\in\overline{B}(-2e_1/3,1/9)\cup\overline{B}(2e_1/3,1/9)}$.
It remains to rule out the possibility that $y_0\in\overline{B}(-2e_1/3,1/9)$.
To this end, we use the bound $|x_0-y_0|\leq25/36$ to argue that $y_0$ is too close to $e_1/3$ to belong to $\overline{B}(-2e_1/3,1/9)$.
Indeed, by the triangle inequality, we have
\[\left|y_0-\frac{e_1}{3}\right|\leq|y_0-x_0|+\left|x_0-\frac{e_1}{3}\right|\leq\frac{25}{36}+\frac{1}{9}=\frac{29}{36}<\frac{8}{9}=d\left(\frac{e_1}{3};\overline{B}\left(-\frac{2e_1}{3},\frac{1}{9}\right)\right).\]
\end{enumerate}
To conclude the proof of the lemma, it remains to check that the event $G$ has positive probability.
To this end, we rewrite it as the intersection
\[G=(\Gamma\leq t)\cap\mathsf{Inner}\cap\mathsf{Outer}\cap\mathsf{Junction}'\cap\mathsf{Desert}',\]
where $\mathsf{Junction}'$ is the good event: ``there is exactly one road $(\ell,v)\in\Pi$ with speed ${v\in[24,25[}$ that passes through $\overline{B}(-2e_1/3,\delta)$ and $\overline{B}(2e_1/3,\delta)$'', and $\mathsf{Desert}'$ is the intersection of the following events:
\begin{itemize}
\item $D$: ``there is no road $(\ell,v)\in\Pi$ with speed $v\in{[2,24[}\cup{[25,\infty[}$ that passes through $\overline{B}(-2e_1/3,\delta)$ and $\overline{B}(2e_1/3,\delta)$''.
\item $E$: ``there is no road of $\Pi$ with speed at least $2$ that passes through $\overline{B}(0,2/3-r)$ but does not belong to $\left\langle\overline{B}(-2e_1/3,\delta)\,;\,\overline{B}(2e_1/3,\delta)\right\rangle$''.
\end{itemize}
In this decomposition, the events $(\Gamma\leq t)$, $\mathsf{Inner}$, $\mathsf{Outer}$, $\mathsf{Junction}'$, $D$ and $E$ are clearly independent, and each of them has positive probability: it follows that $G$ has positive probability, which completes the proof.
\end{proof}

Now, it remains to reap the rewards of the above work.

\begin{proof}[Proof of item \ref{item:confluencetyp} of Proposition \ref{prop:confluence}]
Without loss of generality, we assume that $x=0$.
\begin{enumerate}[label=(\alph*)]
\item Set $r_k=r^k$ for all $k\in\mathbb{N}$, where $r\in{]0,1[}$ is the constant of Lemma \ref{lem:confluencetypmaster}.
It suffices to prove that almost surely, there are infinitely many integers $k\in\mathbb{N}$ for which there exists a cut point $z\in\left.\overline{B}(0,r_k)\middle\backslash\overline{B}(0,r_{k+1})\right.$ such that any geodesic $V$-path from a point inside $\overline{B}(0,r_{k+1})$ to a point outside $\overline{B}(0,r_k)$ must pass through $z$.
For each $k\in\mathbb{N}$, let $G_k$ be the good event $G$ of Lemma \ref{lem:confluencetypmaster}, but for the pushforward $\Pi\circ f_{e_1/3,r_k^{-1}}^{-1}$ in place of $\Pi$, where $f_{x,r}$ is the map of \eqref{eq:defscaling}.
By \eqref{eq:selfsimilarPi}, we have $\mathbb{P}(G_k)=\mathbb{P}(G)$.
Moreover, note that for every ${(\ell,v)\in\mathbb{L}\times\mathbb{R}_+^*}$, the road $(\ell,v)$ passes through $\overline{B}(0,r_k)$ \big(resp. $\overline{B}(0,r_{k+1})$\big) if and only if $f_{e_1/3,r_k^{-1}}(\ell,v)$ passes through $\overline{B}(e_1/3,1)$ \big(resp. $\overline{B}(e_1/3,r)$\big).
In particular, on $G_k$, there exists a cut point $z\in\left.\overline{B}(0,r_k)\middle\backslash\overline{B}(0,r_{k+1})\right.$ such that any geodesic $V$-path from a point inside $\overline{B}(0,r_{k+1})$ to a point outside $\overline{B}(0,r_k)$ must pass through $z$.
To conclude, it suffices to {notice that by ergodicity (see Lemma \ref{lem:mixingzoom}), we have}
\[\frac{\#\{k\in{\llbracket0,n\llbracket}:\text{$G_k$ is realised}\}}{n}\underset{n\to\infty}{\longrightarrow}\mathbb{P}(G)\quad\text{almost surely.}\]

\item This is a straightforward consequence of the previous point and the uniqueness of geodesics between typical points.
Recall that almost surely, there exists a unique geodesic $V$-path from $0$ to $e_1$ (see \cite[Theorem 4.4]{kendall} for $d=2$ and \cite[Theorem 4.8]{kahn} for $d\geq3$).
Now, fix a typical realisation of $\Pi$, and fix $y_1,y_2\in\left.\mathbb{R}^d\middle\backslash\{0\}\right.$.
Let ${\gamma_1:[0,T(0,y_1)]\rightarrow\mathbb{R}^d}$ be a geodesic $V$-path from $0$ to $y_1$, and ${\gamma_2:[0,T(0,y_2)]\rightarrow\mathbb{R}^d}$ from $0$ to $y_2$.
Moreover, let $\gamma_0:[0,T(0,e_1)]\rightarrow\mathbb{R}^d$ be the unique geodesic $V$-path from $0$ to $e_1$.
Then, fix $\delta>0$ small enough so that $y_1,y_2,e_1\notin\overline{B}(0,\delta)$.
By the previous point, there exists $\eta\in{]0,\delta[}$ and a cut point $z\in\left.\overline{B}(0,\delta)\middle\backslash\overline{B}(0,\eta)\right.$ such that any geodesic $V$-path from a point inside $\overline{B}(0,\eta)$ to a point outside $\overline{B}(0,\delta)$ must pass through $z$.
In particular, there exists ${t_1\in{]0,T(0,y_1)[}}$, $t_2\in{]0,T(0,y_2)[}$ and $t_0\in{]0,T(0,e_1)[}$ such that ${\gamma_1(t_1)=\gamma_2(t_2)=\gamma_0(t_0)=z}$.
Since $\gamma_1$, $\gamma_2$ and $\gamma_0$ are geodesic, we must have ${t_1=t_2=t_0=T(0,z)}$.
Finally, since $\gamma_0$ is the unique geodesic from $0$ to $e_1$, we must have $\gamma_1(t)=\gamma_2(t)=\gamma_0(t)$ for all $t\in[0,T(0,z)]$.
\end{enumerate}
\end{proof}

\subsection{Around points on roads}\label{subsec:confluenceroads}

In this subsection, we prove item \ref{item:confluenceroads} and \ref{item:confluenceintersections} of Proposition \ref{prop:confluence}.
Following the same general strategy as in the previous subsection, we first construct a ``master'' good event as follows.

\begin{lem}\label{lem:confluenceroadsmaster}
There exists a constant $r\in{]0,1/3[}$, and a good event $G$ that has positive probability, such that on $G$, the following holds:
\begin{itemize}
\item For every $x\in\partial B(0,r)$ and $y\in\partial B(0,2/3)$, we have $T(x,y)\geq1/6$,

\item For every $x,x'\in\partial B(0,r)$ and $y,y'\in\partial B(0,2/3)$, we have 
\[T\left(x,x'\right)+T\left(y,y'\right)\leq\frac{1}{12}.\]
\end{itemize}
\end{lem}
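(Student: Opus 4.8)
Along the lines of the proof of Lemma~\ref{lem:confluencetypmaster}, the plan is to build the master event $G$ by the same recipe, but with the single fast ``\textsf{Junction}'' road replaced by a \emph{desert}: a central region crossed by no fast road, which is what will make the inner-to-outer crossing cost at least $1/6$. I would fix a small constant $r\in{]0,1/6[}$ (further smallness conditions appear below), put $\Lambda=4-12r$, so that $(2/3-2r)/\Lambda=1/6$, fix $\lambda$ large and $\delta>0$ small, and use Lemma~\ref{lem:confluencetool1} to fix $t$ with $\mathbb{P}(\Gamma>t)\le 1/2$, where $\Gamma$ is measurable with respect to the roads of $\Pi$ of speed less than $1$ and satisfies $T(a,b)\le\Gamma\cdot|a-b|^{(\beta-d)/(\beta-1)}\ln(4/|a-b|)^{1/(\beta-1)}$ for all $a\ne b\in\overline{B}(0,1)$. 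Covering $\partial B(0,r)$ and $\partial B(0,2/3)$ by finitely many balls of radius $\delta$ centred at points of sets $W_{\mathsf{in}}\subset\partial B(0,r)$ and $W_{\mathsf{out}}\subset\partial B(0,2/3)$, I set: $\mathsf{Inner}$ to be the event that for every $w_1\ne w_2\in W_{\mathsf{in}}$ some road of $\Pi$ of speed in $[1,2[$ passes through $\overline{B}(w_1,\delta)$ and $\overline{B}(w_2,\delta)$; $\mathsf{Outer}$ the event that for every $w_1\ne w_2\in W_{\mathsf{out}}$ with $\mu(\langle\overline{B}(w_1,\delta);\overline{B}(w_2,\delta)\rangle\setminus\langle\overline{B}(0,2/3-r)\rangle)>0$ some road of $\Pi$ of speed at least $\lambda$ passes through $\overline{B}(w_1,\delta)$ and $\overline{B}(w_2,\delta)$ without meeting $\overline{B}(0,2/3-r)$; and $\mathsf{Desert}$ the event that no road of $\Pi$ of speed at least $\Lambda$ meets $\overline{B}(0,2/3-r)$. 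Then $G:=(\Gamma\le t)\cap\mathsf{Inner}\cap\mathsf{Outer}\cap\mathsf{Desert}$.

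For the upper bounds I would argue as in the proof of Lemma~\ref{lem:confluencetypmaster}, with $\omega(\delta)=\sup_{s\in{]0,2\delta]}}s^{(\beta-d)/(\beta-1)}\ln(4/s)^{1/(\beta-1)}$. The bound $T(x,x')\le 1/24$ for $x,x'\in\partial B(0,r)$ is immediate because $\partial B(0,r)$ has diameter $2r$: on $\mathsf{Inner}$, together with $(\Gamma\le t)$ for the two $\delta$-steps on and off a road, one gets $T(x,x')\le 2t\,\omega(\delta)+2r$, which is $\le 1/24$ once $r$ and $\delta$ are small. For $T(y,y')\le 1/24$ with $y,y'\in\partial B(0,2/3)$, I would apply Lemma~\ref{lem:confluencetool2} after rescaling by $3/2$, with $\rho=1-3r/2$, to get a chain $y=z_0,z_1,\dots,z_k=y'$ on $\partial B(0,2/3)$ with $k\le K=K(\rho)$, whose consecutive chords stay at Euclidean distance $>2/3-r$ from the origin (hence miss $\overline{B}(0,2/3-r)$) and have total length at most $2\pi/3$; on $\mathsf{Outer}$ one travels each step $z_{i-1}\to z_i$ along a road of speed $\ge\lambda$ near the chord, paying $2t\,\omega(\delta)$ to get on and off, so $T(z_{i-1},z_i)\le 2t\,\omega(\delta)+(|z_{i-1}-z_i|+2\delta)/\lambda$, and summing gives $T(y,y')\le 2Kt\,\omega(\delta)+2K\delta/\lambda+(2\pi/3)/\lambda$, which is $\le 1/24$ once $\lambda$ is large and $\delta$ small. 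Adding the two estimates yields the second assertion.

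For the lower bound, let $\gamma:[0,T(x,y)]\to\mathbb{R}^d$ be a geodesic $V$-path from $x\in\partial B(0,r)$ to $y\in\partial B(0,2/3)$. Since $|x|=r<2/3-r<2/3=|y|$, the path exits the open ball $B(0,2/3-r)$ at a first time $s\in{]0,T(x,y)[}$, and $z:=\gamma(s)\in\partial B(0,2/3-r)$. For every $u\in[0,s]$ we have $\gamma(u)\in\overline{B}(0,2/3-r)$, so any road of $\Pi$ through $\gamma(u)$ meets $\overline{B}(0,2/3-r)$ and hence, on $\mathsf{Desert}$, has speed $<\Lambda$; therefore $V(\gamma(u))\le\Lambda$ on $[0,s]$. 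The $V$-path inequality then gives $|x-z|=|\gamma(0)-\gamma(s)|\le\int_0^sV(\gamma(u))\,\mathrm{d}u\le\Lambda s$, whence $T(x,y)\ge s\ge|x-z|/\Lambda\ge(2/3-2r)/\Lambda=1/6$, using $|x-z|\ge|z|-|x|=2/3-2r$ and $\Lambda=4-12r$. This is the only assertion genuinely new relative to Lemma~\ref{lem:confluencetypmaster}, and it is short.

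It remains to check that $\mathbb{P}(G)>0$. The events $(\Gamma\le t)$, $\mathsf{Inner}$, $\mathsf{Outer}$, $\mathsf{Desert}$ are measurable with respect to the restrictions of $\Pi$ to, respectively, the roads of speed $<1$, the roads of speed in $[1,2[$, the roads of speed $\ge\lambda$ that do not meet $\overline{B}(0,2/3-r)$, and the roads of speed $\ge\Lambda$ that do meet $\overline{B}(0,2/3-r)$; since $r<1/6$ forces $\Lambda>2$, these four families of roads are pairwise disjoint, so the four events are independent. Each has positive probability: clearly so for $(\Gamma\le t)$ and $\mathsf{Desert}$, while $\mathsf{Inner}$ and $\mathsf{Outer}$ are finite intersections of increasing events, each of positive probability because the corresponding sets of lines carry positive $\mu$-measure (the witnessing chords being stable under small perturbations), hence of positive probability by the FKG inequality. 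Therefore $\mathbb{P}(G)>0$. I expect the main obstacle to be not any single step but the bookkeeping that makes $\mathsf{Inner}$, $\mathsf{Outer}$ and $\mathsf{Desert}$ simultaneously satisfiable — concretely, the chain $1\le(\text{inner-ring speeds})<2\le\Lambda=4-12r$ and the requirement that the outer ring lives outside the desert ball — which is exactly what is arranged by taking $r<1/6$ small and $\lambda$ large, $\delta$ small.
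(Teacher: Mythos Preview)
Your proof is correct and follows the same approach as the paper's sketch (Figure~\ref{fig:near-road}): an inner ring of moderate-speed roads near $\partial B(0,r)$, an outer ring of fast roads near $\partial B(0,2/3)$ avoiding the central ball, and a $\mathsf{Desert}$ event forbidding fast roads through $\overline{B}(0,2/3-r)$. The paper only provides a figure and leaves the details to the reader, so you are essentially filling in what the paper omits; the one cosmetic difference is your desert threshold $\Lambda=4-12r$ (tuned so the lower bound is exactly $1/6$) versus the paper's threshold $2$ (which gives the slightly stronger $(2/3-2r)/2\geq 1/6$ for small $r$), and your explicit appeal to FKG for the positivity of $\mathbb{P}(\mathsf{Inner})$ and $\mathbb{P}(\mathsf{Outer})$, which the paper leaves implicit.
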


Later on, we will use this statement together with a Mecke formula argument: the point $0$ will then play the role of a typical point on a road $(\ell,v)\in\Pi$ (in view of item \ref{item:confluenceroads} of Proposition \ref{prop:confluence}), or of the intersection point of two roads $(\ell_1,v_1)\neq(\ell_2,v_2)\in\Pi$ in the planar case $d=2$ (in view of item \ref{item:confluenceintersections} of Proposition \ref{prop:confluence}).

\begin{proof}
Since the construction is very similar to that of Lemma \ref{lem:confluencetypmaster}, we only sketch it in  Figure \ref{fig:near-road}, and leave the details to the reader.

\begin{figure}[!h]
\begin{center}
\includegraphics[width=10cm]{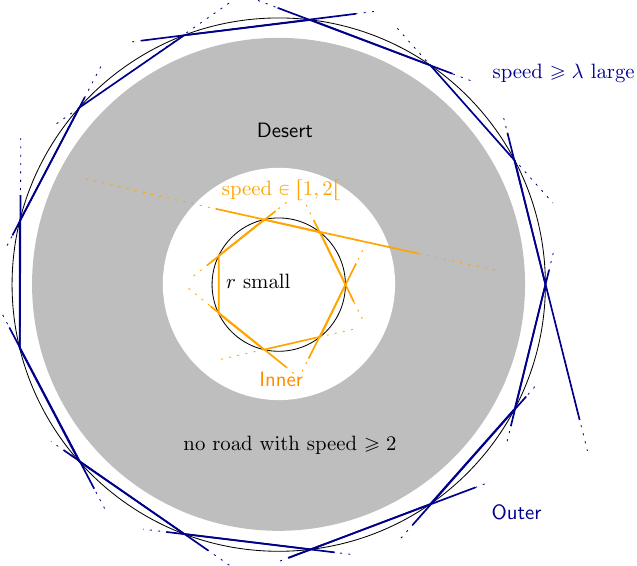}
\caption{
{Sketch of the good event $G$ of Lemma \ref{lem:confluenceroadsmaster}.
Picture $\mathsf{Desert}$ as an area with a poor road system, in which no road has speed at least $2$.
Furthermore, picture ${\partial B(0,r)=\mathsf{Inner}}$ and ${\partial B(0,2/3)=\mathsf{Outer}}$ as ring roads (constructed out of multiple roads of $\Pi$), with speed $\in[1,2[$ and $\geq\lambda$ respectively, where $r$ is small and $\lambda$ is large.
Then, taking $r$ small enough and $\lambda$ large enough, we see that the  driving time between $\mathsf{Inner}$ and $\mathsf{Outer}$ (which is at least $1/2\cdot(2/3-r)$) is much longer than the driving time diameter of $\mathsf{Inner}$ (which is roughly at most $\pi\cdot r\cdot1/1$) plus the driving time diameter of $\mathsf{Outer}$ (which is roughly at most $\pi\cdot2/3\cdot1/\lambda$).} \label{fig:near-road}}
\end{center}
\end{figure}
\end{proof}

Now, as before, it remains to reap the rewards of all the above work.

\begin{proof}[Proof of items \ref{item:confluenceroads} and \ref{item:confluenceintersections} of Proposition \ref{prop:confluence}]
\begin{enumerate}
\item[\ref{item:confluenceroads}.] \begin{enumerate}
\item In view of the Mecke formula (see, e.g, \cite[Theorem 4.1]{lastpenrose}), let us first fix a road $(\ell_0,v_0)\in\mathbb{L}\times\mathbb{R}_+^*$, and show that almost surely, the following property $P_{(\ell_0,v_0)}(x)$ holds for almost every $x\in\ell_0$: ``for each $\delta>0$, there exists $\eta\in{]0,\delta[}$ such that any geodesic $V'$-path from a point inside $\overline{B}(x,\eta)$ to a point outside $\overline{B}(x,\delta)$ must use $(\ell_0,v_0)$ within $\overline{B}(x,\delta)$'', where $V'$ denotes the ``speed limits'' function induced by $\Pi'=\Pi\oplus(\ell_0,v_0)$.
Without loss of generality {by the invariance in distribution of $\Pi$ under rotations and translations}, we assume that $\ell_0=\mathbb{R}\times\{0\}^{d-1}$.
First, let us show that almost surely, the property $P_{(\ell_0,v_0)}(x)$ holds for $x=0$.
To this end, as in the proof of the first item of Proposition \ref{prop:confluence}, set $r_k=r^k$ for all $k\in\mathbb{N}$, where $r\in{]0,1[}$ is the constant of Lemma \ref{lem:confluenceroadsmaster}, and let $G_k$ be the good event $G$ of Lemma \ref{lem:confluenceroadsmaster} but for the pushforward $\Pi\circ f_{0,r_k^{-1}}^{-1}$ in place of $\Pi$, where $f_{x,r}$ is the map of \eqref{eq:defscaling}.
By \eqref{eq:selfsimilarPi}, we have $\mathbb{P}(G_k)=\mathbb{P}(G)$.
Moreover, on $G_k$, the following holds:
\begin{itemize}
\item For every $x\in\partial B(0,r\cdot r_k)$ and $y\in\partial B(0,2r_k/3)$, we have 
\[T(x,y)\geq\frac{1}{6}\cdot r_k^{(\beta-d)/(\beta-1)},\]

\item For every $x,x'\in\partial B(0,r\cdot r_k)$ and $y,y'\in\partial B(0,2r_k/3)$, we have 
\[T\left(x,x'\right)+T\left(y,y'\right)\leq\frac{1}{12}\cdot r_k^{(\beta-d)/(\beta-1)}.\]
\end{itemize}
This readily implies that for all sufficiently large $k$, on $G_k$, any geodesic $V'$-path from a point inside $\overline{B}(0,r_{k+1})$ to a point outside $\overline{B}(0,r_k)$ must use $(\ell_0,v_0)$ within $\overline{B}(0,r_k)$.
Indeed, let us denote by $T'$ the ``driving time'' random metric induced by ${\Pi'=\Pi\oplus(\ell_0,v_0)}$. 
Assume that $k$ is large enough so that $2r_k/v_0<1/12\cdot r_k^{(\beta-d)/(\beta-1)}$, fix a $V'$-path $\gamma$ from a point inside $\overline{B}(0,r_{k+1})$ to a point outside $\overline{B}(0,r_k)$, and let us show that on $G_k$, the $V'$-path $\gamma$ must use $(\ell_0,v_0)$ within $\overline{B}(0,r_k)$.
Let ${x\in\partial B(0,r\cdot r_k)}$ be the first exit point of $\gamma$ from $\overline{B}(0,r\cdot r_k)$, and let $y\in\partial B(0,2r_k/3)$ be the first exit point of $\gamma$ from $\overline{B}(0,2r_k/3)$.
Moreover, fix points $x_0\in\ell_0\cap\partial B(0,r\cdot r_k)$ and ${y_0\in\ell_0\cap\partial B(0,2r_k/3)}$.
By the triangle inequality, we have
\[\begin{split}
T'(x,y)&\leq T(x,x_0)+\frac{|x_0-y_0|}{v_0}+T(y_0,y)\\
&\leq T(x,x_0)+\frac{2r_k}{v_0}+T(y_0,y)\leq\frac{1}{12}\cdot r_k^{(\beta-d)/(\beta-1)}+\frac{2r_k}{v_0},
\end{split}\]
therefore,
\[T'(x,y)<\frac{1}{12}\cdot r_k^{(\beta-d)/(\beta-1)}+\frac{1}{12}\cdot r_k^{(\beta-d)/(\beta-1)}=\frac{1}{6}\cdot r_k^{(\beta-d)/(\beta-1)}\leq T(x,y).\]
It follows that $\gamma$ must use $(\ell_0,v_0)$ to drive from $x$ to $y$, hence within $\overline{B}(0,r_k)$.
By ergodicity as before (using Lemma \ref{lem:mixingzoom}), we obtain that almost surely, the event $G_k$ is realised for infinitely many $k\in\mathbb{N}$.
Thus, we obtain that almost surely, the property $P_{(\ell_0,v_0)}(x)$ holds for $x=0$.
By the invariance in distribution of $\Pi$ under translations and by Fubini's theorem, it follows that almost surely, the property $P_{(\ell_0,v_0)}(x)$ holds for almost every $x\in\ell_0$.
We deduce by the Mecke formula that almost surely, for each road $(\ell,v)\in\Pi$, the property $P_{(\ell,v)}(x)$ holds for almost every $x\in\ell$.

\item Fix a typical realisation of $\Pi$, fix a road $(\ell,v)\in\Pi$, and fix a typical point $x\in\ell$.
By the previous item, the following holds: for each $\delta>0$, there exists $\eta\in{]0,\delta[}$ such that any geodesic $V$-path from a point inside $\overline{B}(x,\eta)$ to a point outside $\overline{B}(x,\delta)$ must use $(\ell,v)$ within $\overline{B}(x,\delta)$.
Now, fix a $V$-path $\gamma:[0,\tau]\rightarrow\mathbb{R}^2$ such that $\gamma(0)=x$, and let us prove that there must exist $t_0\in{]0,\tau]}$ such that $\gamma(t)\in\ell$ for all $t\in[0,t_0]$.
Let $\delta\in{]0,|x-\gamma(\tau)|[}$ be small enough so that $(\ell,v)$ is the fastest road of $\Pi$ that passes through $\overline{B}(x,\delta)$.
Since the $V$-path $\gamma$ must use $(\ell,v)$ within $\overline{B}(x,\delta)$, there must exist $t_0\in{]0,\tau]}$ such that $\gamma(t_0)\in\ell\cap\overline{B}(x,\delta)$.
Moreover, the restriction of $\gamma$ to $[0,t_0]$ must remain within $\overline{B}(x,\delta)$.
Indeed, since $(\ell,v)$ is the fastest road of $\Pi$ within $\overline{B}(x,\delta)$, the driving time of any $V$-path from $x$ to a point outside $\overline{B}(x,\delta)$ must be strictly greater than $\delta/v$, which is an obvious upper bound for $t_0$.
In turn, since $(\ell,v)$ is the fastest road of $\Pi$ within $\overline{B}(x,\delta)$, the triangle inequality proves that the $V$-path $\gamma$ must drive from $x$ to $\gamma(t_0)$ using $(\ell,v)$.
\end{enumerate}

\item[\ref{item:confluenceintersections}.] \begin{enumerate}[label=(\alph*)]
\item In view of the (multivariate) Mecke formula (see, e.g, \cite[Theorem 4.4]{lastpenrose}), let us first fix a pair of roads ${(\ell_1,v_1)\neq(\ell_2,v_2)\in\mathbb{L}\times\mathbb{R}_+^*}$.
Arguing exatcly as before, we obtain that almost surely, the intersection point $x$ of $\ell_1$ and $\ell_2$ has the following property: for each $\delta>0$, there exists $\eta\in{]0,\delta[}$ such that any geodesic $V'$-path from a point inside $\overline{B}(x,\eta)$ to a point outside $\overline{B}(x,\delta)$ must use $(\ell_1,v_1)$ or $(\ell_2,v_2)$ within $\overline{B}(x,\delta)$, where $V'$ denotes the ``speed limits'' function induced by $\Pi'=\Pi\oplus(\ell_1,v_1)\oplus(\ell_2,v_2)$.
As above, we conclude using the (multivariate) Mecke formula.

\item Fix a typical realisation of $\Pi$, and fix a pair of roads $(\ell_1,v_1)\neq(\ell_2,v_2)\in\Pi$, with $v_1>v_2$.
By the previous item, the intersection point $x$ of $\ell_1$ and $\ell_2$ has the following property: for each $\delta>0$, there exists $\eta\in{]0,\delta[}$ such that any geodesic $V$-path from a point inside $\overline{B}(x,\eta)$ to a point outside $\overline{B}(x,\delta)$ must use $(\ell_1,v_1)$ or $(\ell_2,v_2)$ within $\overline{B}(x,\delta)$.
Now, fix a $V$-path $\gamma:[0,\tau]\rightarrow\mathbb{R}^2$ such that $\gamma(0)=x$, and let us prove that there must exist $t_0\in{]0,\tau]}$ such that $\gamma(t)\in\ell_1$ for all $t\in[0,t_0]$ or $\gamma(t)\in\ell_2$ for all $t\in[0,t_0]$.
Let $\delta\in{]0,|x-\gamma(\tau)|[}$ be small enough so that the third fastest road of $\Pi$ within $\overline{B}(x,\delta)$ has speed less than $v_2$.
If the $V$-path $\gamma$ uses $(\ell_1,v_1)$ within $\overline{B}(x,\delta)$, then we argue exactly as before: since $(\ell_1,v_1)$ is the fastest road of $\Pi$ within $\overline{B}(x,\delta)$, there must exist $t_0\in{]0,\tau]}$ such that $\gamma(t)\in\ell_1$ for all $t\in[0,t_0]$.
Otherwise, the $V$-path $\gamma$ must use $(\ell_2,v_2)$ within $\overline{B}(x,\delta)$, hence there must exist $t_0\in{]0,\tau]}$ such that $\gamma(t_0)\in\ell_2\cap\overline{B}(x,\delta)$.
Then, once more we argue exactly as before.
First, since $(\ell_2,v_2)$ is the fastest road of $\Pi$ used by $\gamma$ within $\overline{B}(x,\delta)$, the restriction of $\gamma$ to $[0,t_0]$ must remain within $\overline{B}(x,\delta)$ (for otherwise its driving time $t_0$ would be strictly greater than $\delta/v_2$).
In turn, since $(\ell_2,v_2)$ is the fastest road of $\Pi$ used by $\gamma$ within $\overline{B}(x,\delta)$, the triangle inequality proves that the $V$-path $\gamma$ must drive from $x$ to $\gamma(t_0)$ using $(\ell_2,v_2)$.
\end{enumerate}
\end{enumerate}
\end{proof}

\section{On the number of arms of geodesic stars}

In this section, we prove that there is a deterministic bound on the number of arms of geodesic stars.
More precisely, we prove the following result, which is yet another confluence of geodesics property.

\begin{prop}\label{prop:K+stars}
There exists an integer $m\in\mathbb{N}^*$ such that almost surely, for every $x\in\mathbb{R}^d$, the following property $P(x)$ holds: ``for each $\delta>0$, there exists $\eta\in{]0,\delta[}$ and a cut set 
\[\mathcal{Z}\subset\left.\overline{B}(x,\delta)\middle\backslash\overline{B}(x,\eta)\right.\]
with cardinality $\#\mathcal{Z}\leq m$, such that any geodesic $V$-path from a point inside $\overline{B}(x,\eta)$ to a point outside $\overline{B}(x,\delta)$ must pass through $z$ for some $z\in\mathcal{Z}$''.

In particular, there are no $(m+1)^+$-stars in $\left(\mathbb{R}^d,T\right)$.
\end{prop}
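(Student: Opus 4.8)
The plan is to reduce the existence of a deterministic bound on the number of arms of geodesic stars to a single ``master'' good-event estimate at a fixed scale, exactly in the spirit of Lemmas \ref{lem:confluencetypmaster} and \ref{lem:confluenceroadsmaster}, and then to upgrade it from ``positive probability at a fixed scale'' to ``almost sure at some scale around every point'' via the self-similarity \eqref{eq:selfsimilarPi} and the ergodicity input (Lemma \ref{lem:mixingzoom}). The key point to identify first is the right uniform combinatorial bound $m$. I would extract it from the geometry of the ring-road construction: on the good event built out of an $\mathsf{Inner}$ ring around $\overline{B}(x,\eta)$, an $\mathsf{Outer}$ ring near $\partial\overline{B}(x,\delta)$, and a $\mathsf{Desert}$ region with a poor road system in between, any geodesic $V$-path from inside $\overline{B}(x,\eta)$ to outside $\overline{B}(x,\delta)$ must cross the $\mathsf{Desert}$ using one of the \emph{finitely many} fast roads that traverse it — and the number of such roads is controlled, uniformly, by a Poisson count of the form \eqref{eq:normgeo}. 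A geodesic entering such a fast road does so along a sub-segment, and by the triangle-inequality rigidity of geodesic $V$-paths recalled in Section \ref{sec:rappelsgeo}, all geodesics using a given road must funnel through a single cut point on it (the analogue of the point $z$ in Lemma \ref{lem:confluencetypmaster}). Thus $\mathcal{Z}$ has at most one point per ``through-road'' of the $\mathsf{Desert}$, giving a cut set of size at most some deterministic $m$.

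Concretely, the first step is to state and prove a master lemma: there exist a constant $r\in{]0,1/3[}$, a deterministic integer $m\in\mathbb{N}^*$, and a good event $G$ of positive probability, on which there is a cut set $\mathcal{Z}\subset\overline{B}(0,2/3)\setminus\overline{B}(0,r)$ with $\#\mathcal{Z}\leq m$ through which every geodesic $V$-path from a point inside $\overline{B}(0,r)$ to a point outside $\overline{B}(0,2/3)$ must pass. To build $G$, I would: (i) invoke Lemma \ref{lem:confluencetool1} to dominate the metric contribution of slow roads ($v<1$), making the $\mathsf{Desert}$ region genuinely ``slow'' off the fast roads; (ii) impose $\mathsf{Inner}$ and $\mathsf{Outer}$ ring events as in the previous proofs so that the $T$-diameters of $\partial B(0,r)$ and $\partial B(0,2/3)$ are negligible compared with $1/2\cdot(2/3-r)$ (a lower bound for the crossing time off fast roads); (iii) fix a threshold speed $\lambda$ and declare $\mathsf{Desert}_\lambda$ to be the event that the number $N$ of roads of $\Pi$ with speed $\geq\lambda$ passing through $\overline{B}(0,2/3)$ is at most $m$ — this has probability bounded below, uniformly, provided $m$ is chosen as a high-enough quantile of the corresponding Poisson variable with parameter $(2/3)^{d-1}\lambda^{-(\beta-1)}$, and one checks a Borel–Cantelli-free fixed-scale bound suffices since we only need \emph{positive} probability. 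On $G$, any geodesic crossing from $\overline{B}(0,r)$ to outside $\overline{B}(0,2/3)$ is too slow unless it uses one of these $\leq m$ fast roads within $\overline{B}(0,2/3)$; each such road carries a single cut point by the rigidity argument; set $\mathcal{Z}$ to be the (at most $m$) resulting cut points. The ``$\eta$'' of the statement is produced by noting that the geodesic enters a fast road at a controlled distance from $0$, strictly positive, as in item 3 of the proof of Lemma \ref{lem:confluencetypmaster}.

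The second step is routine given the template in the proof of item \ref{item:confluencetyp} of Proposition \ref{prop:confluence}: fix $x$ (WLOG $x=0$), set $r_k=r^k$, let $G_k$ be the event $G$ for the rescaled process $\Pi\circ f_{0,r_k^{-1}}^{-1}$, use \eqref{eq:selfsimilarPi} to get $\mathbb{P}(G_k)=\mathbb{P}(G)>0$, and apply Lemma \ref{lem:mixingzoom} to conclude that almost surely $G_k$ occurs for infinitely many $k$; on each such $k$ the property $P(0)$ holds at scales $(r_{k+1},r_k)$, which is more than enough. To get $P(x)$ simultaneously for \emph{all} $x\in\mathbb{R}^d$ — not just for each fixed $x$ — I would run the argument on a countable dense set $D$ and then use a short continuity/monotonicity argument: given arbitrary $x$ and $\delta$, pick $x'\in D$ very close to $x$ and $\delta'$ slightly smaller, so that $\overline{B}(x,\eta')\subset\overline{B}(x',\eta)$ and $\overline{B}(x',\delta)\subset\overline{B}(x,\delta)$ for suitable radii, transferring the cut set of $x'$ to one for $x$. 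Finally, the ``in particular'' is immediate: if $x$ were an $(m+1)^+$-star, the $m+1$ disjoint geodesics emanating from $x$ would, for small enough $\delta$, each exit $\overline{B}(x,\delta)$, so each would have to pass through some $z\in\mathcal{Z}$; since $\#\mathcal{Z}\leq m$, two of them share a point $z\neq x$, contradicting disjointness (here one also uses, as in Lemma \ref{lem:nobanana}, that two geodesics from $x$ agreeing at a point agree up to it, so sharing $z$ forces them to coincide on $[0,T(x,z)]$, not merely to cross).

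The main obstacle I anticipate is step one, specifically making the $\mathsf{Desert}$ argument quantitatively airtight: one must ensure that a geodesic cannot ``cheat'' by chaining together many roads each slightly faster than the $\mathsf{Desert}$ floor but slower than $\lambda$, accumulating a short crossing time without using a $\lambda$-fast road. This is precisely what Lemma \ref{lem:confluencetool1} is for — it caps the metric built from \emph{all} roads of speed $<1$ — but one still needs to handle roads of intermediate speed in $[1,\lambda)$; the fix is to absorb them into the count, i.e.\ choose $m$ as a quantile for roads of speed $\geq\lambda_0$ for a fixed moderate $\lambda_0$ rather than a large $\lambda$, and separately argue (using the crossing-time lower bound and the smallness of the ring diameters) that finitely many such roads cannot be chained to beat the $\mathsf{Outer}$-to-$\mathsf{Inner}$ budget. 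Getting the bookkeeping of constants consistent, as in \eqref{eq:delta}–\eqref{eq:outer}, is the only real work; the probabilistic content is entirely contained in the fixed-scale positivity of $\mathbb{P}(G)$ and in Lemma \ref{lem:mixingzoom}.
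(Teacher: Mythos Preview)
Your master-event construction is roughly in the right spirit, but there is a genuine gap in the passage from ``almost surely for each fixed $x$'' to ``almost surely for every $x$'', and this is precisely where the paper's proof differs from yours in an essential way.

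Your density/monotonicity transfer does not work. Suppose you know $P(x')$ for every $x'$ in a countable dense set $D$. Fix an arbitrary $x$ and $\delta>0$, and pick $x'\in D$ with $|x-x'|<\varepsilon$. Applying $P(x')$ at radius $\delta'<\delta-\varepsilon$ produces some $\eta'\in{]0,\delta'[}$ and a cut set in $\overline{B}(x',\delta')\setminus\overline{B}(x',\eta')$. To convert this into a cut set for $x$, you need $\overline{B}(x,\eta)\subset\overline{B}(x',\eta')$ for some $\eta>0$, i.e.\ $\eta'>\varepsilon$. But $\eta'$ is random and can be arbitrarily small; you chose $\varepsilon$ (hence $x'$) \emph{before} learning $\eta'$, so there is no reason for $\eta'>\varepsilon$. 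Trying to iterate (pick $x'_k\to x$ and hope some scale works) fails for the same reason: the good scales $k$ for which $G_k^{x'}$ holds depend on $x'$, and as $x'\to x$ you need $G_k^{x'}$ for $k\to\infty$, which ergodicity at a single $x'$ does not provide. The property $P(\cdot)$ is simply not continuous enough for a density argument.

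The paper handles this by making the argument quantitative. Instead of merely $\mathbb{P}(G)>0$, Lemma \ref{lem:K+stars} proves a large-deviation bound $\mathbb{P}\bigl(\#\{k<n:G_k\ \text{realised}\}\leq n/3\bigr)\leq r_n^{2d}$. One then covers $\overline{B}(0,R)$ by $O(r_n^{-d})$ balls of radius $r_n$, applies a union bound over their centres, and uses Borel--Cantelli: almost surely, for all large $n$, \emph{every} centre $y$ in the $r_n$-net has a good scale $k\in{[n_0,n[}$, and since $|x-y|<r_n\leq r_{k+1}$, the cut set in $\overline{B}(y,r_k)\setminus\overline{B}(y,r_{k+1})$ serves for $x$ as well. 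Obtaining the $r_n^{2d}$ bound is the real content and requires decomposing $G_k$ into an independent-across-scales piece and a ``fast roads'' piece controlled by an exponential-moment computation, rather than just invoking Lemma \ref{lem:mixingzoom}.

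A secondary issue: your claim that on the master event each through-road carries a \emph{single} cut point is too optimistic. In Lemma \ref{lem:confluencetypmaster} this works because the unique $\mathsf{Junction}$ road is forced to be used over a segment containing a fixed midpoint; with several fast roads crossing a desert, different geodesics may use a given road over disjoint sub-segments (and a line meets the annulus in two arcs). The paper instead shows that any geodesic uses some fast road over a segment of length at least a fixed $\delta=1/(4m\cdot 2^m)$, and then places $O(1/\delta)$ cut points per road, yielding $\#\mathcal{Z}\leq m\cdot(16m\cdot 2^m+1)$. This is fixable in your framework, but worth noting.
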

See \cite[Proposition 2.6]{gwynnenetworks} for a related result in Liouville quantum gravity.
Our approach is similar to that of the previous section, except that it must be quantitative: as opposed to the confluence of geodesics properties around \textit{typical} points in Proposition \ref{prop:confluence}, we need here to handle \textit{all} points simultaneously.
The main input in the proof is the following lemma.

\begin{lem}\label{lem:K+stars}
There exists an integer $m\in\mathbb{N}^*$ and a constant $\rho\in{]0,1[}$ such that the following holds, where $r_k=\rho^k$ for all $k\in\mathbb{N}$. There exists good events $(G_k)_{k\in\mathbb{N}}$ such that:
\begin{itemize}
\item On $G_k$, there exists a cut set $\mathcal{Z}\subset\left.\overline{B}(0,r_k)\middle\backslash\overline{B}(0,r_{k+1})\right.$ with cardinality $\#\mathcal{Z}\leq m$, such that any geodesic $V$-path from a point inside $\overline{B}(0,r_{k+1})$ to a point outside $\overline{B}(0,r_k)$ must pass through $z$ for some $z\in\mathcal{Z}$,

\item We have
\begin{equation}\label{eq:scalesK+stars}
\mathbb{P}\left(\#\{k\in{\llbracket0,n\llbracket}:\text{$G_k$ is realised}\}\leq\frac{n}{3}\right)\leq r_n^{2d}\quad\text{for all sufficiently large $n$.}
\end{equation}
\end{itemize}
\end{lem}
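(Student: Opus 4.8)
The plan is to mimic the construction of the master good event from Lemma \ref{lem:confluencetypmaster}, but to make it quantitative in the scale parameter so that a union bound over scales and over ``bad'' points can be absorbed. The key point is that, unlike in Proposition \ref{prop:confluence}, where we only needed a positive-probability event occurring at infinitely many scales, here we need the confluence event to occur at a \emph{definite positive fraction} of scales with a probability of failure that is \emph{polynomially small} in $r_n$. First I would revisit the event $G$ of Lemma \ref{lem:confluencetypmaster} (the ``$\mathsf{Desert}/\mathsf{Inner}/\mathsf{Outer}/\mathsf{Junction}$'' construction), whose conclusion already produces a \emph{single} cut point $z$; so in fact one may take $m=1$ at a fixed scale, \emph{provided the event $G$ is realised at that scale}. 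The subtlety is that $G$ only has some fixed probability $p=\mathbb{P}(G)\in(0,1)$, not probability close to $1$, so at a single scale it typically fails, and at those scales we must fall back on a weaker but always-available statement. This is where the cut \emph{set} of size $m>1$ enters: I would combine $G$ with the deterministic a priori control from Lemma \ref{lem:confluencetool1} (the H\"older regularity of $T$ away from a small ball, with Gaussian-type tails on the constant $\Gamma$) to show that, at \emph{every} scale, there is \emph{some} bounded-cardinality cut set, with failure probability of the ``too-large constant'' type, i.e.\ super-polynomially small. Concretely, on the event $\{\Gamma_k\le t\}$ (the rescaled version of the constant in Lemma \ref{lem:confluencetool1} at scale $r_k$), one can cover the sphere $\partial B(0,r_k)$ by $O(1)$ balls and, crossing the annulus, show that any geodesic entering deep must pass within a fixed distance of one of $O(1)$ ``gateway'' points; taking $m$ to be this universal combinatorial constant gives the first bullet on the event $\{\Gamma_k\le t\}$, and Lemma \ref{lem:confluencetool1} gives $\mathbb{P}(\Gamma_k>t)\le C\exp(-ct^{\beta-1})$.

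Next I would address the second bullet, the estimate \eqref{eq:scalesK+stars}. Set $G_k$ to be (a rescaled copy of) the strong event $G$ of Lemma \ref{lem:confluencetypmaster} at scale $r_k$ \emph{together with} $\{\Gamma_k\le t\}$; by the scale invariance \eqref{eq:selfsimilarPi} each $G_k$ has the same probability $p'>0$. The point is then to control $\#\{k\in\llbracket 0,n\llbracket:\ G_k \text{ holds}\}$ from below with overwhelming probability. The obstacle is that the events $G_k$ at different scales are \emph{not independent}: each $G_k$ depends on the roads visible in the annulus $\overline B(0,r_k)\setminus \overline B(0,r_{k+1})$ (after recentring at $e_1/3$ rescaled), but also, through $\Gamma_k$, on slower roads that may persist across scales. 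To get around this I would pass to a sparse sub-sequence of scales: choose a large integer $L$ (depending only on $\beta,d$) and restrict attention to the scales $k\in\{0,L,2L,\dots\}$. Along this sub-sequence the dependence is weak enough — the ``fast-road'' parts of the events live in disjoint annuli, and the slow-road contribution to $\Gamma_k$ decouples up to an exponentially small error using the self-similarity and the tail bound of Lemma \ref{lem:confluencetool1} — that I can either (i) directly compare to an i.i.d.\ sequence of $\{0,1\}$-variables with success probability $\ge p''>0$ via a standard coupling/domination argument, or (ii) invoke the quantitative mixing statement the paper refers to as Lemma \ref{lem:mixingzoom}. Either way, a Chernoff/Azuma bound for the resulting near-independent Bernoulli sequence yields
\[
\mathbb{P}\!\left(\#\{k\in\llbracket 0,n\llbracket:\ G_k\text{ holds}\}\le \tfrac n3\right)\le C\exp(-c\, n),
\]
and since $r_n=\rho^n$ we have $\exp(-cn)=r_n^{c/\ln(1/\rho)}$, which is $\le r_n^{2d}$ for all large $n$ once $\rho$ is chosen close enough to $1$ (equivalently $L$ large enough); this is precisely the freedom built into the statement, which only fixes $\rho\in(0,1)$ after $m$.

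For the \emph{first} bullet I must check that on $G_k$ the desired cut set $\mathcal{Z}$ of cardinality $\le m$ exists. On the strong sub-event (the rescaled $G$ of Lemma \ref{lem:confluencetypmaster}) we even get $\#\mathcal{Z}=1$, exactly as there; on its complement — but still on $\{\Gamma_k\le t\}$, which by the preceding paragraph I will have arranged to be implied by a slightly enlarged ``always-on'' event — the $O(1)$-gateway argument above gives a cut set of the universal size $m$. So it suffices to define $G_k$ as the conjunction described, note $\#\mathcal Z\le m$ holds on \emph{all} of the enlarged event, and keep the \emph{strong} event only for the counting in \eqref{eq:scalesK+stars}; any scale $k$ at which $\{\Gamma_k\le t\}$ \emph{fails} is harmless because such failures are super-polynomially rare and can be thrown into the error term. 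I expect the \textbf{main obstacle} to be making the weak-dependence/decoupling across scales precise enough to run the concentration inequality — in particular handling the long-range effect of slow roads in $\Gamma_k$ — and this is where I would lean most heavily on the self-similarity \eqref{eq:selfsimilarPi}, the exponential tail in Lemma \ref{lem:confluencetool1}, and the mixing input Lemma \ref{lem:mixingzoom}; the geometric part (producing $O(1)$ gateway points on an annulus crossing) is routine given Lemma \ref{lem:confluencetool2}.
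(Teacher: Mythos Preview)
Your proposal contains a genuine gap in the construction of the cut set on the event $\{\Gamma_k\le t\}$ alone. Bounding the $T$-diameter of the annulus does not by itself produce finitely many ``gateway points'' through which every crossing geodesic must pass: a geodesic could a priori thread the annulus using arbitrarily many different roads, and covering $\partial B(0,r_k)$ by $O(1)$ balls says nothing about where the geodesic is \emph{inside} the annulus. The paper's master event is instead $G^{t,m}=D^t\cap E^{t,m}$, where $E^{t,m}$ is the event that at most $m$ roads with speed $\ge 1/(4t)$ hit $\overline B(0,2)$. On $D^t$ the crossing time is $\le t$, so at least $1/4$ of the Euclidean length of the crossing lies on roads of speed $\ge 1/(4t)$; on $E^{t,m}$ there are at most $m$ such roads, and the structure of geodesic $V$-paths (each such road is used on at most $2^m$ non-trivial segments) then forces one segment of length $\ge 1/(4m\cdot 2^m)$ on a specific road, which must hit one of $O(m\cdot 2^m)$ pre-placed reference points on that road. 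This combinatorial step is the missing idea in your sketch.

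There is also a problem with the probability estimate. You propose to base $G_k$ on the master event of Lemma~\ref{lem:confluencetypmaster}, which has some fixed probability $p\in(0,1)$ with no reason to exceed $1/3$; a Chernoff bound on the count $\#\{k<n:G_k\}$ then goes in the wrong direction. The paper instead arranges $\mathbb P(G_k)$ close to $1$ by choosing $t$ large (so $\mathbb P(D^t)\to 1$) and $m$ large (so $\mathbb P(E^{t,m})\to 1$). For the tail bound \eqref{eq:scalesK+stars}, the decomposition $G_k=D_k^t\cap E_k^{t,m}$ is crucial: the $D_k^t$ are \emph{genuinely independent} because $\Gamma$ in Lemma~\ref{lem:confluencetool1} is measurable with respect to roads that do \emph{not} pass through the inner ball, so Chernoff applies directly without any sparse-subsequence decoupling. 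The events $E_k^{t,m}$, by contrast, are strongly correlated across scales (a single fast road at scale $0$ passes through every smaller ball), and your mixing/sparsification idea does not address this; the paper handles it by bounding $\sum_k\mathbf 1_{(E_k^{t,m})^c}\le m^{-1}\sum_k\Pi(\langle\overline B(0,r_k)\rangle\times[v_k,\infty[)$, rearranging the double sum, and applying an exponential Markov inequality with an explicit moment-generating-function computation, after which $m$ is chosen large enough to beat the required polynomial rate.
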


First, let us show how the lemma yields the result.

\begin{proof}[Proof of Proposition \ref{prop:K+stars} assuming Lemma \ref{lem:K+stars}]
Fix $\delta>0$, and let $\mathcal{X}_\delta$ be the set of points $x\in\mathbb{R}^d$ for which the following property $P_\delta(x)$ does not hold: ``there exists $\eta\in{]0,\delta[}$ and a cut set $\mathcal{Z}\subset\left.\overline{B}(x,\delta)\middle\backslash\overline{B}(x,\eta)\right.$ with cardinality $\#\mathcal{Z}\leq m$, such that any geodesic $V$-path from a point inside $\overline{B}(x,\eta)$ to a point outside $\overline{B}(x,\delta)$ must pass through $z$ for some $z\in\mathcal{Z}$''.
Let us show that for each $R>0$, almost surely, the set ${\mathcal{X}_\delta\cap\overline{B}(0,R)}$ is empty.
For every $n\in\mathbb{N}$, let $(B(y,r_n)\,;\,y\in Y_n)$ be a covering of $\overline{B}(0,R)$ by balls of radius $r_n$, with centres $y\in\overline{B}(0,R)$ at least $r_n$ apart from each other so that the balls $(B(y,r_n/2)\,;\,y\in Y_n)$ are disjoint.
In particular, a measure argument entails that $\#Y_n\leq C\cdot r_n^{-d}$ for some constant $C$.
For each $y\in Y_n$, and for every $k\in\mathbb{N}$, let $G^y_k$ be the good event $G_k$ of Lemma \ref{lem:K+stars}, but for the pushforward $\Pi\circ f_{-y,1}^{-1}$ in place of $\Pi$, where $f_{x,r}$ is the map of \eqref{eq:defscaling}.
By \eqref{eq:selfsimilarPi}, we have
\[\#\left\{k\in{\llbracket0,n\llbracket}:\text{$G_k^y$ is realised}\right\}\overset{\text{\scriptsize law}}{=}\#\left\{k\in{\llbracket0,n\llbracket}:\text{$G_k$ is realised}\right\}.\]
Now, we claim that for all sufficiently large $n$, we have
\begin{equation}\label{eq:K+starsdiscr}
\mathcal{X}_\delta\cap\overline{B}(0,R)\subset\bigcup_{y\in\mathcal{Y}_n}B(y,r_n),
\end{equation}
where $\mathcal{Y}_n$ denotes the set of points $y\in Y_n$ such that $\#\left\{k\in{\llbracket0,n\llbracket}:\text{$G^y_k$ is realised}\right\}\leq n/3$.
Indeed, set $n_0$ to be the smallest integer $n\in\mathbb{N}$ such that $2r_n\leq\delta$, and let $x\in\mathcal{X}_\delta\cap\overline{B}(0,R)$.
Then, fix ${n\geq3n_0}$, and let $y\in Y_n$ be such that $x\in B(y,r_n)$.
By the triangle inequality, we have $\overline{B}(y,r_{n_0})\subset\overline{B}(x,\delta)$.
Thus, since $P_\delta(x)$ does not hold, none of the good events $\left(G^y_k\,;\,k\in{\llbracket n_0,n\llbracket}\right)$ can be realised.
In particular, we have 
\[\#\left\{k\in{\llbracket0,n\llbracket}:\text{$G^y_k$ is realised}\right\}\leq n_0\leq\frac{n}{3},\]
which proves \eqref{eq:K+starsdiscr}.
Now, by the union bound, we have
\begin{eqnarray*}
\lefteqn{\mathbb{P}\left(\exists y\in Y_n:\#\left\{k\in{\llbracket0,n\llbracket}:\text{$G^y_k$ is realised}\right\}\leq\frac{n}{3}\right)}\\
&\leq&\sum_{y\in Y_n}\mathbb{P}\left(\#\left\{k\in{\llbracket0,n\llbracket}:\text{$G^y_k$ is realised}\right\}\leq\frac{n}{3}\right)\\
&=&\#Y_n\cdot\mathbb{P}\left(\#\left\{k\in{\llbracket0,n\llbracket}:\text{$G_k$ is realised}\right\}\leq\frac{n}{3}\right)\\
&\leq&C\cdot r_n^{-d}\cdot r_n^{2d}=C\cdot r_n^d,
\end{eqnarray*}
where the last inequality holds for all sufficiently large $n$.
By the Borel--Cantelli lemma, we deduce that almost surely, for all sufficiently large $n$, there is no point $y\in Y_n$ for which $\#\left\{k\in{\llbracket0,n\llbracket}:\text{$G^y_k$ is realised}\right\}\leq n/3$, i.e, the set $\mathcal{Y}_n$ is empty.
It follows that almost surely, the set $\mathcal{X}_\delta\cap\overline{B}(0,R)$ is empty, as we wanted to prove.

The fact that almost surely, there are no $(m+1)^+$-stars in $\left(\mathbb{R}^d,T\right)$ is a straightforward consequence of the fact that almost surely, the property $P(x)$ holds for all $x\in\mathbb{R}^d$.
Indeed, by contraposition, if there exists an $(m+1)^+$-star in $\left(\mathbb{R}^d,T\right)$, say $x$, then there exists $(m+1)$ geodesics $\gamma_i:[0,\tau_i]\rightarrow\mathbb{R}^d$ emanating from $x$ such that $\gamma_i{]0,\tau_i]}\cap\gamma_j{]0,\tau_j]}=\emptyset$ for all $i\neq j$.
Then, for $\delta>0$ small enough so that $|x-\gamma(\tau_i)|>\delta$ for all $i$, the property $P_\delta(x)$ cannot hold, hence $P(x)$ does not hold.
\end{proof}

Now, we come to the proof of the lemma.

\begin{proof}[Proof of Lemma \ref{lem:K+stars}]
\emph{\underline{Step 1:} Construction of a ``master'' good event.}
As for the confluence of geodesics results in the previous subsection, we start by devising a master good event $G$ on which the desired property holds, and from which all the good events $G_k$ will be obtained by simple transformations.
This time, the construction relies on an idea of \cite[Theorem 6.1]{kahn}.
Let $A=\left.\overline{B}(0,1)\middle\backslash B(0,1/2)\right.$ be the annulus between radii $1/2$ and $1$ around $0$.
By Lemma \ref{lem:confluencetool1}, there exists $\rho_0\in{]0,1/2[}$ and a random variable $\Gamma$, measurable with respect to the restriction of $\Pi$ to the set of roads that pass through $\overline{B}(0,2)$ but do not pass through $\overline{B}(0,\rho_0)$, such that 
\[T(x,y)\leq\Gamma\quad\text{for all $x\neq y\in A$}.\]
We use here that the function ${s\in{]0,2]}\mapsto s^{(\beta-d)/(\beta-1)}\cdot\ln(4/s)^{1/(\beta-1)}}$ is bounded.
Moreover, there exists constants $C$ and $c$ such that
\[\mathbb{P}(\Gamma>t)\leq C\cdot\exp\left[-c\cdot t^{\beta-1}\right]\quad\text{for all $t\in\mathbb{R}_+^*$.}\]
In particular, we have $\mathbb{P}(\Gamma>t)\rightarrow0$ as $t\to\infty$.
Now, fix parameters $t\in\mathbb{R}_+^*$ and $m\in\mathbb{N}^*$ to be adjusted, and set $G^{t,m}=D^t\cap E^{t,m}$, where $D^t=(\Gamma\leq t)$, and $E^{t,m}$ is the event: ``there are no more than $m$ roads of $\Pi$ with speed at least $1/(4t)$ that pass through $\overline{B}(0,2)$''.
We claim that on $G^{t,m}$, there exists a cut set $\mathcal{Z}\subset A$ with cardinality $\#\mathcal{Z}\leq m\cdot\left(16m\cdot2^m+1\right)$\footnote{The exact dependence on $m$ is not important.}, such that any geodesic $V$-path from a point inside $\overline{B}(0,1/2)$ to a point outside $\overline{B}(0,1)$ must pass through $z$ for some $z\in\mathcal{Z}$.

Indeed, fix a realisation of $G^{t,m}$, and let $\gamma:[0,\tau]\rightarrow\mathbb{R}^d$ be a geodesic $V$-path from a point inside $\overline{B}(0,1/2)$ to a point outside $\overline{B}(0,1)$.
Let $\sigma_1<\sigma_2\in{[0,\tau]}$ be such that $\gamma(\sigma_1)\in\partial B(0,1/2)$ and ${\gamma(\sigma_2)\in\partial B(0,1)}$, and $\gamma(s)\in A$ for all $s\in[\sigma_1,\sigma_2]$\footnote{For instance, let $\sigma_2$ be the first exit time of $\gamma$ from $\overline{B}(0,1)$, and set $\sigma_1=\sup\{t\in{[0,\sigma_2[}:|\gamma(t)|<1/2\}$.}.
Observe that
\begin{equation}\label{eq:intersectionlength}
\sum_{\substack{(\ell,v)\in\Pi\\v\geq1/(4t)}}\mathrm{length}(\gamma[\sigma_1,\sigma_2]\cap\ell)\geq\frac{1}{4}.
\end{equation}
Indeed, on the one hand, we have $T(\gamma(\sigma_1),\gamma(\sigma_2))\leq\Gamma\leq t$.
On the other hand, we have
\[T(\gamma(\sigma_1),\gamma(\sigma_2))=\sum_{(\ell,v)\in\Pi}\frac{\mathrm{length}(\gamma[\sigma_1,\sigma_2]\cap\ell)}{v}\geq\sum_{\substack{(\ell,v)\in\Pi\\v<1/(4t)}}\frac{\mathrm{length}(\gamma[\sigma_1,\sigma_2]\cap\ell)}{1/(4t)},\]
hence
\[\sum_{\substack{(\ell,v)\in\Pi\\v<1/(4t)}}\mathrm{length}(\gamma[\sigma_1,\sigma_2]\cap\ell)\leq\frac{t}{4t}=\frac{1}{4}.\]
But we also have
\[\sum_{(\ell,v)\in\Pi}\mathrm{length}(\gamma[\sigma_1,\sigma_2]\cap\ell)\geq|\gamma(\sigma_2)-\gamma(\sigma_1)|\geq|\gamma(\sigma_2)|-|\gamma(\sigma_1)|=\frac{1}{2},\]
hence
\[\begin{split}
\sum_{\substack{(\ell,v)\in\Pi\\v\geq1/(4t)}}\mathrm{length}(\gamma[\sigma_1,\sigma_2]\cap\ell)&=\sum_{(\ell,v)\in\Pi}\mathrm{length}(\gamma[\sigma_1,\sigma_2]\cap\ell)-\sum_{\substack{(\ell,v)\in\Pi\\v<1/(4t)}}\mathrm{length}(\gamma[\sigma_1,\sigma_2]\cap\ell)\\
&\geq\frac{1}{2}-\frac{1}{4}=\frac{1}{4}.
\end{split}\]
Next, since there are no more than $m$ roads of $\Pi$ with speed at least $1/(4t)$ that pass through $\overline{B}(0,1)$, we deduce from \eqref{eq:intersectionlength} that there must exist a road $(\ell_0,v_0)\in\Pi$ with speed $v_0\geq1/(4t)$ such that 
\[\mathrm{length}(\gamma[\sigma_1,\sigma_2]\cap\ell_0)\geq\frac{1}{4m}.\]
Moreover, by the structure of geodesic $V$-paths recalled in Section \ref{sec:rappelsgeo}, the set $\gamma[\sigma_1,\sigma_2]\cap\ell_0$ is a finite union of intervals, among which at most $2^m$ are non-trivial.
Therefore, there must exist points $x\neq y\in\ell_0$ with $|x-y|\geq1/(4m)\cdot2^{-m}=:\delta$ such that the $V$-path $\gamma$ drives from $x$ to $y$ using the road $(\ell,v)$, over the time interval $[\sigma_1,\sigma_2]$.
In particular, note that $[x,y]\in A$.
To complete the proof of the claim, it suffices to argue that there exists a cut set $\mathcal{Z}\subset A$ with cardinality $\#\mathcal{Z}\leq m\cdot\left(16m\cdot2^m+1\right)$, such that for each road $(\ell,v)\in\Pi$ with speed $v\geq1/(4t)$, the following holds: for every point $w\in\ell\cap A$, there exists $z\in\mathcal{Z}$ such that $|w-z|\leq\delta/2$.
Applying this to $(\ell,v)=(\ell_0,v_0)$ and $w=(x+y)/2$ from above, we deduce that there exists $z\in\mathcal{Z}$ such that $z\in[x,y]$, hence $\gamma$ passes through $z$.
The existence of such a set $\mathcal{Z}$ is guaranteed by the usual covering argument: for each road $(\ell,v)\in\Pi$ with speed $v\geq1/(4t)$ that passes through $\overline{B}(0,1)$, there exists a covering $\left(B(z,\delta/2)\,;\,z\in\mathcal{Z}_{(\ell,v)}\right)$ of $\ell\cap A$ by balls of radius $\delta/2$, with centres $z\in\ell\cap A$ at least $\delta/2$ apart from each other.
Then, since the balls $\left(B(z,\delta/4)\,;\,\mathcal{Z}_{(\ell,v)}\right)$ are disjoint and included in $\ell\cap\overline{B}(0,1+\delta/4)$, a measure argument entails that $\#\mathcal{Z}_{(\ell,v)}\leq4/\delta+1=16m\cdot2^m+1$. 
Finally, setting $\mathcal{Z}$ to be the union of the $\mathcal{Z}_{(\ell,v)}$, over the roads $(\ell,v)\in\Pi$ with speed $v\geq1/(4t)$ that pass through $\overline{B}(0,1)$, we obtain what we wanted, with $\#\mathcal{Z}\leq m\cdot\left(16m\cdot2^m+1\right)$.

To conclude this first step of the proof, let us record that:
\begin{itemize}
\item On $G^{t,m}$, there exists a cut set $\mathcal{Z}\subset A$ with cardinality $\#\mathcal{Z}\leq m\cdot\left(16m\cdot2^m+1\right)$, such that any geodesic $V$-path from a point inside $\overline{B}(0,1/2)$ to a point outside $\overline{B}(0,1)$ must pass through $z$ for some $z\in\mathcal{Z}$.

\item The good event $G^{t,m}$ can be decomposed as the intersection $G^{t,m}=D^t\cap E^{t,m}$, where:
\begin{itemize}
\item the event $D^t=(\Gamma\leq t)$ is measurable with respect to the restriction of $\Pi$ to set of roads that pass through $\overline{B}(0,2)$ but do not pass through $\overline{B}(0,\rho_0)$, and has probability ${\mathbb{P}\left(D^t\right)=\mathbb{P}(\Gamma\leq t)}$ going to $1$ as $t\to\infty$,

\item $E^{t,m}$ is the event: ``there are no more than $m$ roads of $\Pi$ with speed at least $1/(4t)$ that pass through $\overline{B}(0,2)$''.
\end{itemize}
\end{itemize}

\smallskip

\emph{\underline{Step 2:} Multiscale argument.}
Set $r_k=(\rho_0/2)^k$ for all $k\in\mathbb{N}$, where $\rho_0\in{]0,1/2[}$ is the constant of step 1.
For each $k\in\mathbb{N}$, let $G_k^{t,m}$ be the good event $G^{t,m}$, but for the pushforward $\Pi\circ f_{0,2/r_k}^{-1}$ in place of $\Pi$, where $f_{x,r}$ is the map of \eqref{eq:defscaling}.
Moreover, set 
\[v_k=\frac{1}{4t}\cdot\left(\frac{r_k}{2}\right)^{(d-1)/(\beta-1)}\quad\text{for all $k\in\mathbb{N}$.}\]
Note that for every $(\ell,v)\in\mathbb{L}\times\mathbb{R}_+^*$, the road $(\ell,v)$ passes through $\overline{B}(0,r_k)$ \big(resp. $\overline{B}(0,r_{k+1})$\big) if and only if $f_{0,2/r_k}(\ell,v)$ passes through $\overline{B}(0,2)$ \big(resp. $\overline{B}(0,\rho_0)$\big), and has speed $v\geq v_k$ if and only if $f_{0,2/r_k}(\ell,v)$ has speed at least $1/(4t)$.
In particular, the following holds:
\begin{itemize}
\item On $G_k^{t,m}$, there exists a cut set $\mathcal{Z}\subset\left.\overline{B}(0,r_k)\middle\backslash\overline{B}(0,r_{k+1})\right.$ with cardinality 
\[\#\mathcal{Z}\leq m\cdot\left(16m\cdot2^m+1\right),\]
such that any geodesic $V$-path from a point inside $\overline{B}(0,r_{k+1})$ to a point outside $\overline{B}(0,r_k)$ must pass through $z$ for some $z\in\mathcal{Z}$.

\item The good event $G_k^{t,m}$ can be decomposed as the intersection $G_k^{t,m}=D_k^t\cap E_k^{t,m}$, where:
\begin{itemize}
\item the event $D_k^t$ is measurable with respect to the restriction of $\Pi$ to set of roads that pass through $\overline{B}(0,r_k)$ but do not pass through $\overline{B}(0,r_{k+1})$, and has probability ${\mathbb{P}\left(D_k^t\right)=\mathbb{P}(D^t)}$ going to $1$ as $t\to\infty$,

\item $E^{t,m}$ is the event: ``there are no more than $m$ roads of $\Pi$ with speed at least $v_k$ that pass through $\overline{B}(0,r_k)$''.
\end{itemize}
\end{itemize}
Finally, let us adjust $t$ and $m$ in order to obtain \eqref{eq:scalesK+stars}.
To begin with, observe that
\begin{eqnarray*}
\lefteqn{\mathbb{P}\left(\#\left\{k\in{\llbracket0,n\llbracket}:\text{$G_k^{t,m}$ is realised}\right\}\leq\frac{n}{3}\right)}\\
&\leq&\mathbb{P}\left(\#\left\{k\in{\llbracket0,n\llbracket}:\text{$D_k^t$ is not realised}\right\}\geq\frac{n}{3}\right)\\
&&+\mathbb{P}\left(\#\left\{k\in{\llbracket0,n\llbracket}:\text{$E_k^{t,m}$ is not realised}\right\}\geq\frac{n}{3}\right).
\end{eqnarray*}
We deal with the first term first, which will lead us to adjusting $t$.
Since the events $\left(D_k^t\right)_{k\in\mathbb{N}}$ are independent, the random variable $\#\left\{k\in{\llbracket0,n\llbracket}:\text{$D_k^t$ is not realised}\right\}$ is stochastically dominated by a binomial random variable with $n$ trials and success probability ${1-\mathbb{P}\left(D^t\right)=:q(t)}$.
By the Chernoff bound for binomial random variables, it follows that
\begin{equation}\label{eq:HkK+stars}
\mathbb{P}\left(\#\left\{k\in{\llbracket0,n\llbracket}:\text{$D_k^t$ is not realised}\right\}\geq\frac{n}{3}\right)\leq\left(\left(\frac{q(t)}{1/3}\right)^{1/3}\cdot\left(\frac{1-q(t)}{2/3}\right)^{2/3}\right)^n.
\end{equation}
Then, since we have $q(t)\rightarrow0$ as $t\to\infty$, we can fix $t$ large enough so that 
\[\left(\frac{q(t)}{1/3}\right)^{1/3}\cdot\left(\frac{1-q(t)}{2/3}\right)^{2/3}\leq\left(\frac{\rho_0}{2}\right)^{3d}.\]
Now that $t$ has been fixed, let us consider the other term.
To begin with, we write
\[\#\left\{k\in{\llbracket0,n\llbracket}:\text{$E_k^{t,m}$ is not realised}\right\}\leq\sum_{k=0}^{n-1}\frac{\Pi\left(\left\langle\overline{B}(0,r_k)\right\rangle\times[v_k,\infty[\right)}{m}.\]
Then, using the shorthand $v_{-1}=\infty$, we write
\[\begin{split}
\sum_{k=0}^{n-1}\Pi\left(\left\langle\overline{B}(0,r_k)\right\rangle\times[v_k,\infty[\right)&=\sum_{k=0}^{n-1}\sum_{j=0}^k\Pi\left(\left\langle\overline{B}(0,r_k)\right\rangle\times[v_j,v_{j-1}[\right)\\
&\leq\sum_{j=0}^{n-1}\sum_{k\geq j}\Pi\left(\left\langle\overline{B}(0,r_k)\right\rangle\times[v_j,v_{j-1}[\right)\\
&=\sum_{j=0}^{n-1}\sum_{l\geq0}\Pi\left(\left\langle\overline{B}\left(0,r_{j+l}\right)\right\rangle\times[v_j,v_{j-1}[\right)=:\sum_{j=0}^{n-1}X_j.
\end{split}\]
Thus, we obtain
\[\mathbb{P}\left(\#\left\{k\in{\llbracket0,n\llbracket}:\text{$E_k^{t,m}$ is not realised}\right\}\geq\frac{n}{3}\right)\leq\mathbb{P}\left(\sum_{j=0}^{n-1}X_j\geq m\cdot\frac{n}{3}\right).\]
Now, observe that the random variables $(X_j\,;\,j\in\llbracket0,n\llbracket)$ are independent, and that each one of them is stochastically dominated by the random variable
\[X=\sum_{k\geq0}\Pi\left(\left\langle\overline{B}(0,r_k)\right\rangle\times[v_0,\infty[\right).\]
At this point, we claim that
\begin{equation}\label{eq:momentestimate}
\mathbb{E}\left[e^{sX}\right]\leq\exp\left[\frac{\zeta^{-(\beta-1)}\cdot e^s}{1-(\rho_0/2)^{d-1}\cdot e^s}\right]\quad\text{for all $s<(d-1)\ln(2/\rho_0)$,}
\end{equation}
where $\zeta=1/(4t)\cdot2^{-(d-1)/(\beta-1)}$.
Provided that this moment estimate holds, let us conclude the argument.
Setting $s=(d-1)\ln(2/\rho_0)-\ln2$, we obtain
\[\mathbb{E}\left[e^{sX}\right]\leq\exp\left[\zeta^{-(\beta-1)}\cdot(\rho_0/2)^{-(d-1)}\right]=:C.\]
Then, by the exponential Markov inequality, we deduce that
\begin{equation}\label{eq:IkK+stars}
\mathbb{P}\left(\sum_{j=0}^{n-1}X_j\geq m\cdot\frac{n}{3}\right)\leq\mathbb{E}\left[e^{sX}\right]^n\cdot e^{-s\cdot m\cdot n/3}\leq C^n\cdot e^{-smn/3}=\left(C\cdot e^{-sm/3}\right)^n.
\end{equation}
To conclude, we choose $m$ large enough so that $C\cdot e^{-sm/3}\leq(\rho_0/2)^{3d}$.
Combining \eqref{eq:HkK+stars} and \eqref{eq:IkK+stars}, we finally obtain
\[\mathbb{P}\left(\#\left\{k\in{\llbracket0,n\llbracket}:\text{$G_k^{t,m}$ is realised}\right\}\leq\frac{n}{3}\right)\leq\left(\left(\frac{\rho_0}{2}\right)^{3d}\right)^n+\left(\left(\frac{\rho_0}{2}\right)^{3d}\right)^n=2\cdot r_n^{3d},\]
which readily yields \eqref{eq:scalesK+stars}.

To complete the proof of the lemma, it remains to check the moment estimate \eqref{eq:momentestimate}.
To this end, we write
\[\begin{split}
X&=\sum_{k\geq0}\sum_{i\geq k}\Pi\left(\left(\left\langle\overline{B}(0,r_i)\right\rangle\middle\backslash\left\langle\overline{B}(0,r_{i+1})\right\rangle\right)\times[v_0,\infty[\right)\\
&=\sum_{i\geq0}(i+1)\cdot\Pi\left(\left(\left\langle\overline{B}(0,r_i)\right\rangle\middle\backslash\left\langle\overline{B}(0,r_{i+1})\right\rangle\right)\times[v_0,\infty[\right)=:\sum_{i\geq0}(i+1)\cdot Y_i.
\end{split}\]
Now, the random variables $(Y_i)_{i\in\mathbb{N}}$ are independent Poisson random variables, and thus we obtain
\[\begin{split}
\mathbb{E}\left[e^{sX}\right]&=\prod_{i\geq0}\mathbb{E}\left[e^{s\cdot(i+1)\cdot Y_i}\right]\\
&=\prod_{i\geq0}\exp\left[\left(r_i^{d-1}-r_{i+1}^{d-1}\right)\cdot v_0^{-(\beta-1)}\cdot\left(e^{s\cdot(i+1)}-1\right)\right]\\
&\leq\prod_{i\geq0}\exp\left[r_i^{d-1}\cdot v_0^{-(\beta-1)}\cdot e^{s\cdot(i+1)}\right]\\
&=\prod_{i\geq0}\exp\left[\zeta^{-(\beta-1)}\cdot e^s\cdot\left((\rho_0/2)^{d-1}\cdot e^s\right)^i\right]=\exp\left[\frac{\zeta^{-(\beta-1)}\cdot e^s}{1-(\rho_0/2)^{d-1}\cdot e^s}\right],
\end{split}\]
for every $s<(d-1)\ln(2/\rho_0)$.
\end{proof}

\begin{rem}
Similarly as above, the arguments of the previous section can be adapted to work out within a multiscale argument, showing that:
\begin{enumerate}
\item There exists $\varepsilon>0$ such that almost surely, the set of points $x\in\mathbb{R}^d$ for which the following property $P(x)$ does not hold has Hausdorff dimension at most $d-\varepsilon$ for the Euclidean metric: ``for every $\delta>0$, there exists ${\eta\in{]0,\delta[}}$ and a cut point $z\in\left.\overline{B}(x,\delta)\middle\backslash\overline{B}(x,\eta)\right.$ such that any geodesic $V$-path from a point inside $\overline{B}(x,\eta)$ to a point outside $\overline{B}(x,\delta)$ must pass through $z$''.

\item There exists $\varepsilon>0$ such that almost surely, for each road $(\ell,v)\in\Pi$, the set of points $x\in\ell$ for which the following property $P_{(\ell,v)}(x)$ does not hold has Hausdorff dimension at most $1-\varepsilon$ for the Euclidean metric: ``for each $\delta>0$, there exists $\eta\in{]0,\delta[}$ such that any geodesic $V$-path from a point inside $\overline{B}(x,\eta)$ to a point outside $\overline{B}(x,\delta)$ must use $(\ell,v)$ within $\overline{B}(x,\delta)$''.
\end{enumerate}
\end{rem}

\section{Geodesics do not pause en route}\label{sec:pause}

In this section, we prove Theorem \ref{thm:pause}. 
An important ingredient in the proof is the notion of $(\varepsilon\vee V)$-path, which we recall now.
Given a realisation of $\Pi$, for each $\varepsilon>0$, we call $(\varepsilon\vee V)$-path any continuous path $\gamma:[0,\tau]\rightarrow\mathbb{R}^d$ that respects the speed limits set by $\varepsilon\vee V$, i.e, such that
\[|\gamma(t_2)-\gamma(t_1)|\leq\int_{t_1}^{t_2}\varepsilon\vee V(\gamma(t))\mathrm{d}t\quad\text{for all $t_1\leq t_2\in[0,\tau]$.}\]
{In words, any $(\varepsilon\vee V)$-path can use the roads of $\Pi$, respecting the speed limits, and drive everywhere at speed $\varepsilon$.}
Now, consider the (random) function
\[\begin{matrix}
T_\varepsilon:&\mathbb{R}^d\times\mathbb{R}^d&\longrightarrow&\mathbb{R}_+\\
&(x,y)&\longmapsto&\inf\left\{\tau>0:\text{there exists an $(\varepsilon\vee V)$-path $\gamma:[0,\tau]\rightarrow\mathbb{R}^d$ from $x$ to $y$}\right\}.
\end{matrix}\]
Contrary to the  case of $T$, it is straightforward that $T_\varepsilon$ is a well-defined distance function on $\mathbb{R}^d$.
We call geodesic $(\varepsilon\vee V)$-path any $(\varepsilon\vee V)$-path $\gamma:[0,\tau]\rightarrow\mathbb{R}^d$ with driving time ${\tau=T_\varepsilon(\gamma(0),\gamma(\tau))}$.
It is not difficult to check that almost surely, any geodesic $(\varepsilon\vee V)$-path ${\gamma:[0,\tau]\rightarrow\mathbb{R}^d}$ has the following structure: there exists a subdivision 
  \begin{eqnarray} \label{eq:subdivision}0=b_0\leq a_1<b_1\leq\ldots\leq a_k<b_k\leq a_{k+1}=\tau  \end{eqnarray}
of $[0,\tau]$, and a collection $(\ell_1,v_1),\ldots,(\ell_k,v_k)\in\Pi$ of roads with speed greater than $\varepsilon$, such that:
\begin{itemize}
\item For each $i\in\llbracket1,k\rrbracket$, the $(\varepsilon\vee V)$-path $\gamma$ drives from $\gamma(a_i)$ to $\gamma(b_i)$ using the road $(\ell_i,v_i)$,
\item For each $i\in\llbracket0,k\rrbracket$, the $(\varepsilon\vee V)$-path $\gamma$ drives from $\gamma(b_i)$ to $\gamma(a_{i+1})$ linearly at speed $\varepsilon$. 
\end{itemize}
Throughout this section, we call simple $V$-path any $V$-path ${\gamma:[0,\tau]\rightarrow\mathbb{R}^2}$ for which there exists a subdivision $0=t_0< t_1<\ldots<t_k=\tau$ of $[0,\tau]$, and a collection $(\ell_1,v_1),\ldots,(\ell_k,v_k)$ of roads of $\Pi$ such that for each $i\in\llbracket1,k\rrbracket$, the $V$-path $\gamma$ drives from $\gamma(t_{i-1})$ to $\gamma(t_i)$ using the road $(\ell_i,v_i)$.  
{The following result, which stems from an elementary deterministic lemma, sharpens the description of geodesic $(\varepsilon\vee V)$-paths: in words, their ability to drive at speed $\varepsilon$ is only used at their endpoints, and not within.}

\begin{prop}\label{prop:epsilongeo2}
Fix $\varepsilon>0$.
Almost surely, any geodesic $(\varepsilon\vee V)$-path $\gamma:[0,\tau]\rightarrow\mathbb{R}^2$ has the following structure: there exists a subdivision ${0\leq t_0<\ldots<t_k\leq \tau}$ of ${[0,\tau]}$, and a collection $(\ell_1,v_1),\ldots,(\ell_k,v_k)\in\Pi$ of roads with speed greater than $\varepsilon$, such that:
\begin{itemize}
\item The $(\varepsilon\vee V)$-path $\gamma$ drives from $\gamma(0)$ to $\gamma(t_0)$ linearly at speed $\varepsilon$,
\item For each $i\in\llbracket1,k\rrbracket$, the $(\varepsilon\vee V)$-path $\gamma$ drives from $\gamma(t_{i-1})$ to $\gamma(t_i)$ using the road $(\ell_i,v_i)$,
\item The $(\varepsilon\vee V)$-path $\gamma$ drives from $\gamma(t_k)$ to $\gamma(\tau)$ linearly at speed $\varepsilon$. 
\end{itemize}
In particular, the restriction of $\gamma$ to $[t_0,t_k]$ is a simple $V$-path.
\end{prop}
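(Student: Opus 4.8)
Start from the coarse structure of geodesic $(\varepsilon\vee V)$-paths recalled just before the statement: almost surely such a $\gamma:[0,\tau]\to\mathbb{R}^2$ decomposes along a subdivision \eqref{eq:subdivision} into nontrivial rides on roads $(\ell_i,v_i)\in\Pi$ with $v_i>\varepsilon$, separated by (possibly trivial) segments travelled at the floor speed $\varepsilon$. It is enough to show that the speed-$\varepsilon$ pieces $\gamma[b_i,a_{i+1}]$ with $1\le i\le k-1$ are trivial: the stated structure then follows at once, since two consecutive road-segments must meet at a common point of $\ell_i$ and $\ell_{i+1}$, hence at $\ell_i\cap\ell_{i+1}$, leaving speed-$\varepsilon$ driving only on $[0,t_0]$ and $[t_k,\tau]$ and making the restriction of $\gamma$ to $[t_0,t_k]$ a simple $V$-path.

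Assume for contradiction that some interior piece is nontrivial; write $A=\gamma(a_i)$, $B=\gamma(b_i)$, $C=\gamma(a_{i+1})$, $D=\gamma(b_{i+1})$, so $A\neq B\in\ell_i$, $C\neq D\in\ell_{i+1}$, $B\neq C$, and $\gamma$ rides $(\ell_i,v_i)$ from $A$ to $B$, travels at speed $\varepsilon$ from $B$ to $C$, then rides $(\ell_{i+1},v_{i+1})$ from $C$ to $D$. Since the restriction of $\gamma$ to $[a_i,b_{i+1}]$ is a $T_\varepsilon$-geodesic and the ride from $A$ to $B$ is nontrivial, $B$ minimises $B'\mapsto|A-B'|/v_i+|B'-C|/\varepsilon$ over $B'\in\ell_i$; differentiating gives the Snell relation $\cos\angle(u,w)=\varepsilon/v_i$, where $u$ is the direction of travel on $\ell_i$ into $B$ and $w=(C-B)/|C-B|$. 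Symmetrically, $\cos\angle(w,u')=\varepsilon/v_{i+1}$, where $u'$ is the direction of travel on $\ell_{i+1}$ out of $C$.

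The elementary deterministic input is then planar trigonometry. Almost surely no two roads of $\Pi$ are parallel (a one-line Mecke computation), so $\ell_i$ and $\ell_{i+1}$ meet at a point $X$. Set $\beta_i=\arccos(\varepsilon/v_i)$ and $\beta_{i+1}=\arccos(\varepsilon/v_{i+1})$, so $v_i=\varepsilon/\cos\beta_i$ and $v_{i+1}=\varepsilon/\cos\beta_{i+1}$. The Snell relations pin down the angles of the triangle $BXC$ up to a finite ambiguity (which side of $B$, resp.\ of $C$, the point $X$ lies on); in the generic configuration these angles are $\beta_i$ at $B$ and $\beta_{i+1}$ at $C$, and then the law of sines together with $\sin\beta_i\cos\beta_{i+1}+\cos\beta_i\sin\beta_{i+1}=\sin(\beta_i+\beta_{i+1})$ gives exactly $|B-X|/v_i+|X-C|/v_{i+1}=|B-C|/\varepsilon$. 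Since moreover $B$ lies between $A$ and $X$ on $\ell_i$ and $C$ between $X$ and $D$ on $\ell_{i+1}$ in that configuration, the ``corner-cutting'' route --- ride $\ell_i$ from $A$ to $X$, then ride $\ell_{i+1}$ from $X$ to $D$ --- has driving time $|A-X|/v_i+|X-D|/v_{i+1}=|A-B|/v_i+|B-C|/\varepsilon+|C-D|/v_{i+1}=T_\varepsilon(A,D)$; so it is itself a geodesic $(\varepsilon\vee V)$-path from $A$ to $D$, it visits $X$, and it has one fewer interior speed-$\varepsilon$ piece. The remaining positions of $X$ are handled by the same law-of-sines identity once one also takes into account what $\gamma$ does just before $A$ and just after $D$.

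Consequently, from any geodesic $(\varepsilon\vee V)$-path with a nontrivial interior speed-$\varepsilon$ piece one manufactures a \emph{distinct} geodesic $(\varepsilon\vee V)$-path with the same endpoints carrying one fewer such piece; iterating produces one of the announced form. Promoting this from ``there exists'' to ``every geodesic $(\varepsilon\vee V)$-path'' is the part I expect to cost the most: I would adapt the arguments of Section \ref{sec:confluence} to the $(\varepsilon\vee V)$-metric --- the confluence properties of Proposition \ref{prop:confluence} and the approximation result of Lemma \ref{lem:approxgeotyp} carry over --- which lets one replace an arbitrary geodesic $(\varepsilon\vee V)$-path, on any compact sub-interval of ${]0,\tau[}$, by a sub-path of a geodesic $(\varepsilon\vee V)$-path between two typical points, for which uniqueness holds by Kendall's argument; a nontrivial interior speed-$\varepsilon$ piece of the former would then force a second geodesic $(\varepsilon\vee V)$-path between those typical points, a contradiction. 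The two delicate points are thus the case analysis behind the trigonometric lemma and this uniqueness input.
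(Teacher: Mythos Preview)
Your Snell relations are exactly the paper's Lemma~\ref{lem:elementarydet}, and up to that point the two arguments coincide. The divergence is in what you do with them.

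You observe (correctly, in the generic configuration) that the Snell constraints force the corner-through-$X$ route to have \emph{exactly} the same $(\varepsilon\vee V)$-time as the $B\to C$ shortcut at speed $\varepsilon$; you then want to invoke uniqueness of $(\varepsilon\vee V)$-geodesics between typical points, plus a Lemma~\ref{lem:approxgeotyp}-type approximation for the $(\varepsilon\vee V)$-metric, to get a contradiction. That is a genuine plan, but it is substantially more expensive than the paper's, and it flirts with circularity: in the paper, uniqueness of $(\varepsilon\vee V)$-geodesics between typical points is \emph{deduced from} Proposition~\ref{prop:epsilongeo2} (see Remark~\ref{rem:newuniqueness}), not the other way around. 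You would need an independent proof of that uniqueness, and you would also have to redo the confluence machinery of Section~\ref{sec:confluence} for the $(\varepsilon\vee V)$-metric. Your case analysis for the non-generic positions of $X$ is also only sketched.

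The paper sidesteps all of this with a single extra observation (Lemma~\ref{lem:epsilongeo2lem2}): the pair of Snell relations
\[
|\langle e,\ell_i\rangle|=\varepsilon/v_i,\qquad |\langle e,\ell_{i+1}\rangle|=\varepsilon/v_{i+1}
\]
for a common unit vector $e$ is a codimension-one constraint on the pair $\big((\ell_i,v_i),(\ell_{i+1},v_{i+1})\big)$, and a short Mecke computation shows that almost surely no pair of roads of $\Pi$ satisfies it. Hence the interior speed-$\varepsilon$ pieces are ruled out \emph{outright}, for every geodesic $(\varepsilon\vee V)$-path simultaneously, with no appeal to uniqueness or confluence. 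Your trigonometric identity is in fact another face of the same degeneracy --- when the over-determined Snell system happens to have a solution, two routes tie --- but the efficient conclusion is that the system almost surely has no solution, rather than that ties contradict uniqueness.
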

\begin{proof}
Fix a typical realisation of $\Pi$, and let $\gamma:[0,\tau]\rightarrow\mathbb{R}^2$ be a geodesic $(\varepsilon\vee V)$-path. Consider a subdivision $0=b_0\leq a_1<b_1\leq\ldots\leq a_k<b_k\leq a_{k+1}=\tau$ as in \eqref{eq:subdivision}.
We claim that in fact, for each $i\in\llbracket1,k\llbracket$, we have $\gamma(b_i)=\gamma(a_{i+1})$.
Provided that this holds, the result of the proposition readily follows, with $t_0=a_1$ and $t_i=b_i$ for all $i\in\llbracket1,k\rrbracket$.
The fact that the restriction of $\gamma$ to $[t_0,t_k]$ is a simple $V$-path is obvious, and it must be geodesic since $\gamma$ is geodesic as an $(\varepsilon\vee V)$-path.
The proof of the claim reads as follows: if there exists $i\in\llbracket1,k\llbracket$ such that $\gamma(a_i)\neq\gamma(b_{i+1})$, then by Lemma \ref{lem:elementarydet} below, we must have
\[\left|\left\langle\frac{\gamma(b_{i+1})-\gamma(a_i)}{|\gamma(b_{i+1})-\gamma(a_i)|},\ell_i\right\rangle\right|=\frac{\varepsilon}{v_i}\quad\text{and}\quad\left|\left\langle\frac{\gamma(b_{i+1})-\gamma(a_i)}{|\gamma(b_{i+1})-\gamma(a_i)|},\ell_{i+1}\right\rangle\right|=\frac{\varepsilon}{v_{i+1}}.\]
However, by Lemma \ref{lem:epsilongeo2lem2} below, almost surely, there is no pair of roads $(\ell_1,v_1)\neq(\ell_2,v_2)\in\Pi$ for which there exists a unit vector $e\in\mathbb{R}^2$ such that $|\langle e,\ell_1\rangle|=\varepsilon/v_1$ and $|\langle e,\ell_2\rangle|=\varepsilon/v_2$. See Figure \ref{fig:pause-route-fig}.

\begin{figure}[!h]
\begin{center}
\includegraphics[width=13cm]{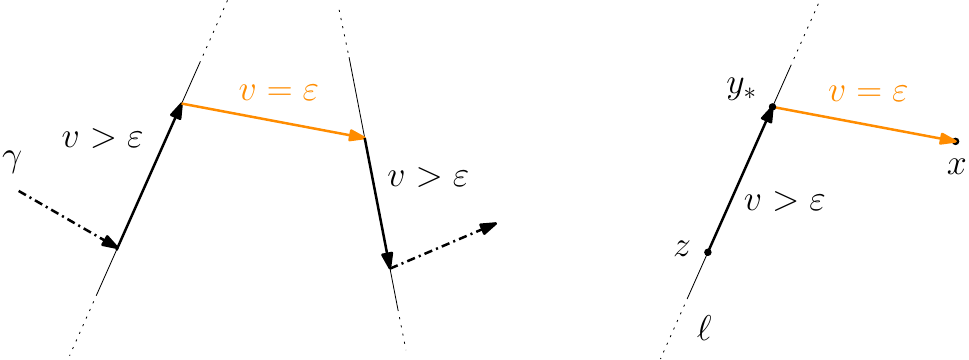}
\caption{\emph{Left:} Illustration of the proof of Proposition \ref{prop:epsilongeo2}: the combination of Lemma \ref{lem:elementarydet} and \ref{lem:epsilongeo2lem2} shows that almost surely, there is no configuration in $\Pi$ such that going from a road with speed $>\varepsilon$ to another road with speed $>\varepsilon$ by driving linearly at speed $\varepsilon$ is locally geodesic.\\
\emph{Right:} Setup of Lemma \ref{lem:elementarydet}. \label{fig:pause-route-fig}}
\end{center}
\end{figure}

To complete the proof of the lemma, we consider Lemma \ref{lem:elementarydet} and Lemma \ref{lem:epsilongeo2lem2}.

\begin{lem}\label{lem:elementarydet}
Fix $\varepsilon>0$, and fix a road $(\ell,v)\in\mathbb{L}\times\mathbb{R}_+^*$.
Then, fix a point $x\in\left.\mathbb{R}^d\middle\backslash\ell\right.$, and fix a point $z\in\ell$.
For every $y\in\ell\backslash\{z\}$, let 
\[\phi(y)=\frac{|y-x|}{\varepsilon}+\frac{|z-y|}{v}\]
be the time taken to drive from $x$ to $y$ linearly at speed $\varepsilon$, and then from $y$ to $z$ using the road $(\ell,v)$.
If $\phi$ is minimal at $y_*\in\ell\backslash\{z\}$, then we must have
\[\left|\left\langle\frac{y_*-x}{|y_*-x|},\ell\right\rangle\right|=\frac{\varepsilon}{v}.\]
\end{lem}
\begin{proof}
Let $e$ be a unit direction vector of $\ell$, and consider the function 
\[\psi:u\in\mathbb{R}^*\longmapsto\phi(z+ue)=\frac{|z+ue-x|}{\varepsilon}+\frac{|u|}{v}.\]
If $\phi$ is minimal at $y_*=z+u_*e$, then $\psi$ is minimal at $u_*$, hence $\psi'(u_*)=0$.
But the derivative of $\psi$ is given by
\[\psi'(u)=\frac{\langle z+ue-x,e\rangle/|z+ue-x|}{\varepsilon}+\frac{\sigma(u)}{v}\quad\text{for all $u\in\mathbb{R}^*$,}\]
where 
\[\sigma(u)=\begin{cases}
1&\text{if $u>0$}\\
-1&\text{otherwise.}
\end{cases}\]
We deduce that
\[\left|\frac{\langle z+u_*e-x,e\rangle}{|z+u_*e-x|}\right|=\frac{\varepsilon}{v},\]
which gives the result.
\end{proof}

\begin{lem}\label{lem:epsilongeo2lem2}
Fix $\varepsilon>0$.
Almost surely, there is no pair of roads $(\ell_1,v_1)\neq(\ell_2,v_2)\in\Pi$ for which there exists $e\in\mathbb{S}_1$ such that $|\langle e,\ell_1\rangle|=\varepsilon/v_1$ and $|\langle e,\ell_2\rangle|=\varepsilon/v_2$.
\end{lem}
\begin{proof}
Let $\chi:\left(\mathbb{L}\times\mathbb{R}_+^*\right)^2\rightarrow\{0,1\}$ be the indicator function defined by
\[\chi(\ell_1,v_1;\ell_2,v_2)=\begin{cases}
1&\text{if there exists $e\in\mathbb{S}_1$ such that $|\langle e,\ell_1\rangle|=\varepsilon/v_1$ and $|\langle e,\ell_2\rangle|=\varepsilon/v_2$}\\
0&\text{otherwise}
\end{cases}\]
for all $(\ell_1,v_1),(\ell_2,v_2)\in\mathbb{L}\times\mathbb{R}_+^*$.
Since the function 
\[\psi:(\ell_1,v_1;\ell_2,v_2;e)\in\left(\mathbb{L}\times\mathbb{R}_+^*\right)^2\times\mathbb{S}_1\longmapsto\left(|\langle e,\ell_1\rangle|-\frac{\varepsilon}{v_1},|\langle e,\ell_2\rangle|-\frac{\varepsilon}{v_2}\right)\]
is continuous, we have
\begin{eqnarray*}
\lefteqn{\left\{(\ell_1,v_1;\ell_2,v_2)\in\left(\mathbb{L}\times\mathbb{R}_+^*\right)^2:\text{$|\langle e,\ell_1\rangle|=\frac{\varepsilon}{v_1}$ and $|\langle e,\ell_2\rangle|=\frac{\varepsilon}{v_2}$}\right\}}\\
&=&\left\{(\ell_1,v_1;\ell_2,v_2)\in\left(\mathbb{L}\times\mathbb{R}_+^*\right)^2:\inf_{e\in D}|\psi(\ell_1,v_1;\ell_2,v_2;e)|=0\right\}
\end{eqnarray*}
for any countable dense subset $D\subset\mathbb{S}_1$, hence $\chi^{-1}\{1\}$ is a Borel subset of $\left(\mathbb{L}\times\mathbb{R}_+^*\right)^2$ and thus $\chi$ is measurable.
Now, we claim that for each road ${(\ell_1,v_1)\in\mathbb{L}\times\mathbb{R}_+^*}$, we have
\begin{equation}\label{eq:epsilongeo2lem2}
\int_\mathbb{L\times\mathbb{R}_+^*}\chi(\ell_1,v_1;\ell_2,v_2)\mathrm{d}\nu(\ell_2,v_2)=0.
\end{equation}
The result of the lemma is a direct consequence, since
\[\mathbb{E}\left[\sum_{(\ell_1,v_1)\neq(\ell_2,v_2)\in\Pi}\chi(\ell_1,v_1;\ell_2,v_2)\right]=\int_{\mathbb{L}\times\mathbb{R}_+^*}\int_{\mathbb{L}\times\mathbb{R}_+^*}\chi(\ell_1,v_1;\ell_2,v_2)\mathrm{d}\nu(\ell_2,v_2)\mathrm{d}\nu(\ell_1,v_1).\]
To prove \eqref{eq:epsilongeo2lem2}, it suffices to show that for each $w\in\mathbb{R}$, we have
\begin{equation}\label{eq:epsilongeo2lem2'}
\int_{\mathbf{SO}\left(\mathbb{R}^2\right)}\int_0^\infty\chi\left(\ell_1,v_1;g(w+\ell_0),v_2\right)\cdot v_2^{-\beta}\mathrm{d}v_2\mathrm{d}g=0.
\end{equation}
To this end, fix a unit direction vector $f_1$ of $\ell_1$.
For every $g\in\mathbf{SO}\left(\mathbb{R}^2\right)$, we have 
\[\chi(\ell_1,v_1;g(w+\ell_0),v_2)=1\]
if and only if there exists $e\in\mathbb{S}_1$ such that $|\langle e,f_1\rangle|=\varepsilon/v_1$ and $|\langle e,g(e_1)\rangle|=\varepsilon/v_2$, if and only if there exists $e=(x_1,x_2)\in\mathbb{S}_1$ such that
\[\begin{bmatrix}
\langle e_1,f_1\rangle&\langle e_2,f_1\rangle\\
\langle e_1,g(e_1)\rangle&\langle e_2,g(e_1)\rangle
\end{bmatrix}\begin{bmatrix}
x_1\\
x_2
\end{bmatrix}=\begin{bmatrix}
\pm\varepsilon/v_1\\
\pm\varepsilon/v_2\end{bmatrix}.\]
Now, fix signs $\sigma_1,\sigma_2\in\{-1,1\}$, and let us check that for almost every $g\in\mathbf{SO}\left(\mathbb{R}^2\right)$, and for almost every $v_2\in\mathbb{R}_+^*$, there is no unit vector $e=(x_1,x_2)$ such that
\begin{equation}\label{eq:epsilongeo2lem2''}
\begin{bmatrix}
\langle e_1,f_1\rangle&\langle e_2,f_1\rangle\\
\langle e_1,g(e_1)\rangle&\langle e_2,g(e_1)\rangle
\end{bmatrix}\begin{bmatrix}
x_1\\
x_2
\end{bmatrix}=\begin{bmatrix}
\sigma_1\varepsilon/v_1\\
\sigma_2\varepsilon/v_2\end{bmatrix}.
\end{equation}
First, recall that the pushforward of the Haar measure by the map $g\in\mathbf{SO}\left(\mathbb{R}^2\right)\mapsto g(e_1)$ is the uniform probability measure on $\mathbb{S}_1$, i.e, the law of $X/|X|$, where $X$ is a standard Gaussian random vector in $\mathbb{R}^2$.
In particular, for almost every $g\in\mathbf{SO}\left(\mathbb{R}^2\right)$, the pair $(f_1,g(e_1))$ is a basis of $\mathbb{R}^2$, hence the change-of-basis matrix 
\[A(g)=\begin{bmatrix}
\langle e_1,f_1\rangle&\langle e_2,f_1\rangle\\
\langle e_1,g(e_1)\rangle&\langle e_2,g(e_1)\rangle
\end{bmatrix}\]
is invertible.
Then, fix $g\in\mathbf{SO}\left(\mathbb{R}^2\right)$ such that $A(g)$ is invertible.
For each $v_2\in\mathbb{R}_+^*$, the only solution $(x_1,x_2)$ to \eqref{eq:epsilongeo2lem2''} is given by
\[\begin{bmatrix}
x_1(v_2)\\
x_2(v_2)
\end{bmatrix}=A(g)^{-1}\cdot\begin{bmatrix}
\sigma_1\varepsilon/v_1\\
\sigma_2\varepsilon/v_2\end{bmatrix}.\]
But from the right hand side, there is at most two values of $v_2$ for which 
\[(x_1(v_2),x_2(v_2))\in\mathbb{S}_1.\]
This yields \eqref{eq:epsilongeo2lem2'}, which completes the proof.
\end{proof}
\end{proof}

Now, we are ready to prove Theorem \ref{thm:pause}.

\begin{proof}[Proof of Theorem \ref{thm:pause}]
\emph{\underline{Step 1:} Geodesics between typical points do not pause en route.}
First, fix $x\neq y\in\mathbb{R}^2$, and let us prove that almost surely, the unique geodesic $V$-path $\gamma$ from $x$ to $y$ does not pause en route.
It suffices to prove that almost surely, for each $\delta\in{]0,|x-y|/2[}$, the $V$-path $\gamma$ is a simple $V$-path between its last exit point from $\overline{B}(x,\delta)$ and its first entry point in $\overline{B}(y,\delta)$.
Recall item \ref{item:confluencetyp} of Proposition \ref{prop:confluence}, and note that almost surely, the properties $P(x)$ and $P(y)$ hold simultaneously, where $P(w)$ is the property: ``for every $\delta>0$, there exists $\eta\in{]0,\delta[}$ and a cut point $z\in\left.\overline{B}(w,\delta)\middle\backslash\overline{B}(w,\eta)\right.$ such that any geodesic $V$-path from a point inside $\overline{B}(w,\eta)$ to a point outside $\overline{B}(w,\delta)$ must pass through $z$''.
Moreover, set ${\varepsilon_n=2^{-n}}$ for all $n\in\mathbb{N}$.
Recall the structure result of Proposition \ref{prop:epsilongeo2}, and note that almost surely, it applies to geodesic $(\varepsilon_n\vee V)$-paths simultaneously for all $n\in\mathbb{N}$.
Now, fix a typical realisation of $\Pi$, and fix ${\delta\in{]0,|x-y|/2[}}$.
By $P(x)$ and $P(y)$, there exists $\eta\in{]0,\delta[}$ and cut points $x'\in\left.\overline{B}(x,\delta)\middle\backslash\overline{B}(x,\eta)\right.$ and $y'\in\left.\overline{B}(y,\delta)\middle\backslash\overline{B}(y,\eta)\right.$ such that:
\begin{itemize}
\item Any geodesic $V$-path from a point inside $\overline{B}(x,\eta)$ to a point outside $\overline{B}(x,\delta)$ must pass through $x'$,

\item Any geodesic $V$-path from a point inside $\overline{B}(y,\eta)$ to a point outside $\overline{B}(y,\delta)$ must pass through $y'$.
\end{itemize}
Then, fix $n\in\mathbb{N}$ large enough so that $T(x,y)\cdot\varepsilon_n\leq\eta$, and let $\gamma_n:[0,T_{\varepsilon_n}(x,y)]\rightarrow\mathbb{R}^2$ be a geodesic $(\varepsilon_n\vee V)$-path from $x$ to $y$.
By the structure of geodesic $(\varepsilon\vee V)$-paths, there exists a subdivision ${0\leq t_0<\ldots<t_k\leq T_{\varepsilon_n}(x,y)}$ of ${[0,T_{\varepsilon_n}(x,y)]}$, and a collection $(\ell_1,v_1),\ldots,(\ell_k,v_k)\in\Pi$ of roads with speed greater than $\varepsilon_n$, such that:
\begin{itemize}
\item The $(\varepsilon_n\vee V)$-path $\gamma_n$ drives from $x$ to $\gamma_n(t_0)$ linearly at speed $\varepsilon_n$,
\item For each $i\in\llbracket1,k\rrbracket$, the $(\varepsilon_n\vee V)$-path $\gamma_n$ drives from $\gamma_n(t_{i-1})$ to $\gamma_n(t_i)$ using the road $(\ell_i,v_i)$,
\item The $(\varepsilon_n\vee V)$-path $\gamma_n$ drives from $\gamma_n(t_k)$ to $y$ linearly at speed $\varepsilon_n$. 
\end{itemize}
Note that we have
\[\frac{|x-\gamma_n(t_0)|}{\varepsilon_n}+\frac{|\gamma_n(t_k)-y|}{\varepsilon_n}\leq T_{\varepsilon_n}(x,y)\leq T(x,y),\]
hence 
\[|x-\gamma_n(t_0)|\leq T(x,y)\cdot\varepsilon_n\leq\eta\quad\text{and}\quad|\gamma_n(t_k)-y|\leq T(x,y)\cdot\varepsilon_n\leq\eta.\]
In particular, the restriction of $\gamma_n$ to $[t_0,t_k]$ is a geodesic $V$-path from a point inside $\overline{B}(x,\eta)$ and outside $\overline{B}(y,\delta)$ to a point inside $\overline{B}(y,\eta)$ and outside $\overline{B}(x,\delta)$.
Therefore, it must pass through the cut points $x'$ and $y'$.
Since the unique geodesic $V$-path $\gamma$ from $x$ to $y$ must also pass through $x'$ and $y'$, we deduce by uniqueness that $\gamma$ must agree with the restriction of $\gamma_n$ to $[t_0,t_k]$ between points $x'$ and $y'$.
In particular, the $V$-path $\gamma$ must be a simple $V$-path between these points, which concludes this first step of the proof.

\smallskip

\emph{\underline{Step 2:} Geodesics between arbitrary points do not pause en route.}
Let $D$ be a countable dense subset of $\mathbb{R}^2$.
By the previous step, almost surely, the unique geodesic $V$-path from $x$ to $y$ does not pause en route, simultaneously for all $x\neq y\in D$.
Now, fix a typical realisation of $\Pi$, let $\gamma:[0,\tau]\rightarrow\mathbb{R}^2$ be a geodesic $V$-path, and fix $t_1<t_2\in{]0,\tau[}$.
By Lemma \ref{lem:approxgeotyp}, there exists $y_1\neq y_2\in D$ and $t_1'<t_2'$ such that $\gamma[t_1,t_2]=\gamma_{y_1,y_2}[t_1',t_2']$, where $\gamma_{y_1,y_2}$ denotes the unique geodesic $V$-path from $y_1$ to $y_2$.
Since $\gamma_{y_1,y_2}$ does not pause en route, there exists $\varepsilon>0$ such that $V(\gamma_{y_1,y_2}(t))\geq\varepsilon$ for all $t\in[t_1',t_2']$.
It follows that $V(\gamma(t))\geq\varepsilon$ for all $t\in[t_1,t_2]$, which concludes the proof.
\end{proof}

\begin{rem}\label{rem:newuniqueness}
Here, we argued using the uniqueness of geodesic $V$-paths between typical points, which was proved by Kendall \cite[Theorem 4.1]{kendall} regardless of the fact that geodesics do not pause en route.
Note that there is another way to conduct the argument, which reads as follows.
First, deduce from the structure results of Proposition \ref{prop:epsilongeo2} the uniqueness of geodesic $(\varepsilon\vee V)$-paths between typical points.
Then, reproduce step 1 above, replacing the uniqueness of geodesic $V$-paths between typical points arguments with uniqueness of geodesic $(\varepsilon\vee V)$-paths between typical points arguments.
Conclude that geodesic $V$-paths between typical points do not pause en route, and are unique.
Then, step 2 carries through without change.
\end{rem}

\section{On the geodesic frame and hubs}

In this section, we study the geodesic frame of $\left(\mathbb{R}^d,T\right)$, and the geodesic hubs in the planar case $d=2$.
Most importantly, we prove Theorem \ref{thm:frame}. 
{The difficulty lies in showing that there is no exceptional point on a road $(\ell_0,v_0)\in\Pi$ from which one can start a geodesic that immediately leaves $(\ell_0,v_0)$ without using an intersection with another road of $\Pi$.
This is a strong form of Kendall's \cite[Theorem 4.3]{kendall}, in which the author assumes that the geodesic uses $(\ell_0,v_0)$ before leaving it.}

\subsection{The geodesic frame is the set of points on roads}

First, let us state the following straightforward consequence of the approximation of geodesics result by geodesics between typical points (Lemma \ref{lem:approxgeotyp}), which holds in general dimension $d\geq2$.

\begin{prop}\label{prop:approxgeotyp}
Let $D$ be a countable dense subset of $\mathbb{R}^d$.
For every $x\neq y\in D$, denote by ${\gamma_{x,y}:[0,T(x,y)]\rightarrow\mathbb{R}^d}$ the unique geodesic $V$-path from $x$ to $y$.
Almost surely, the geodesic frame of $\left(\mathbb{R}^d,T\right)$ equals $\bigcup_{x\neq y\in D}\gamma_{x,y}{]0,T(x,y)[}$.
In particular, it has Hausdorff dimension $1$, both with respect to the metric $T$ and to the Euclidean metric.
\end{prop}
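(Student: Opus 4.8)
The plan is to obtain the set-theoretic identity as a direct consequence of the approximation result Lemma~\ref{lem:approxgeotyp}, and then to read off the Hausdorff dimension statements. Since everything is almost sure and $D$ is countable, I would first place myself on the full-measure event on which Lemma~\ref{lem:approxgeotyp} holds and on which, for every $x\neq y\in D$, there is a unique geodesic $V$-path $\gamma_{x,y}$ from $x$ to $y$. The inclusion $\bigcup_{x\neq y\in D}\gamma_{x,y}{]0,T(x,y)[}\subseteq(\text{geodesic frame})$ is then immediate: each $\gamma_{x,y}$ is a geodesic $V$-path, hence a geodesic of $(\mathbb{R}^d,T)$, and by definition the geodesic frame contains it with its two endpoints removed, i.e.\ contains $\gamma_{x,y}{]0,T(x,y)[}$. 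For the reverse inclusion, I would take a point $p$ in the geodesic frame, write $p=\gamma(s)$ with $\gamma:[0,\tau]\to\mathbb{R}^d$ a geodesic $V$-path and $s\in{]0,\tau[}$, pick $t_1\in{]0,s[}$ and $t_2\in{]s,\tau[}$, and apply Lemma~\ref{lem:approxgeotyp} to find $y_1\neq y_2\in D$ and $t_1'<t_2'\in{]0,T(y_1,y_2)[}$ with $\gamma[t_1,t_2]=\gamma_{y_1,y_2}[t_1',t_2']$. Since $s\in[t_1,t_2]$, this yields $p\in\gamma_{y_1,y_2}[t_1',t_2']\subseteq\gamma_{y_1,y_2}{]0,T(y_1,y_2)[}$, which proves the reverse inclusion and hence the identity.

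For the dimension, I would note that each $\gamma_{x,y}$ is an isometric embedding of $\big([0,T(x,y)],|\cdot|\big)$ into $(\mathbb{R}^d,T)$, so $\gamma_{x,y}{]0,T(x,y)[}$ has Hausdorff dimension exactly $1$ for the metric $T$. With respect to the Euclidean metric, a geodesic between the two fixed points $x,y$ stays in a Euclidean-compact set and uses roads of uniformly bounded speed — almost surely only finitely many roads of speed at least $1$ meet a given compact set, and roads of speed below $1$ are trivially bounded — so $\gamma_{x,y}$ is Lipschitz for $|\cdot|$; thus $\gamma_{x,y}{]0,T(x,y)[}$ is the Lipschitz image of an interval and has Euclidean Hausdorff dimension at most $1$, and at least $1$ since $\gamma_{x,y}$ is injective and this set is therefore a non-degenerate (connected, more than one point) arc. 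As $D$ has at least two elements and the Hausdorff dimension of a countable union is the supremum of those of its members, the identity just established shows that the geodesic frame has Hausdorff dimension exactly $1$, both for $T$ and for $|\cdot|$.

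The argument is essentially bookkeeping built on Lemma~\ref{lem:approxgeotyp}; the only mildly delicate ingredient is the Euclidean-Lipschitz regularity of geodesics between fixed points (equivalently, that $T$-bounded sets are Euclidean-bounded), which follows from $(\mathbb{R}^d,T)$ being homeomorphic to $\mathbb{R}^d$ and may simply be invoked from the preliminaries.
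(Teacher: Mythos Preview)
Your proof is correct and follows the same approach as the paper's: both obtain the set identity directly from Lemma~\ref{lem:approxgeotyp} (one inclusion being trivial), and both deduce the dimension statement from countable stability of Hausdorff dimension together with the Lipschitz continuity of geodesic $V$-paths with respect to the Euclidean metric. The paper's version is terser (three sentences), simply invoking the Lipschitz property from the preliminaries rather than rederiving it; one small caution is that your parenthetical ``$T$-bounded sets are Euclidean-bounded follows from the homeomorphism'' is not quite right as stated (a homeomorphism need not preserve boundedness), but the Lipschitz property you need is indeed recorded earlier in the paper, so invoking it from the preliminaries, as you suggest at the end, is the clean way to go.
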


The fact that the geodesic frame has Hausdorff dimension $1$ with respect to the random metric (here $T$), the dimension of a single geodesic, also holds in Brownian geometry or in Liouville quantum gravity, see \cite{millerqian}. 
A striking difference however is that in Liouville quantum gravity, geodesics are rough objects \cite{fangoswamiroughness}: the geodesic frame has Hausdorff dimension strictly greater than $1$ with respect to the Euclidean metric.

\begin{proof}
The converse inclusion holds automatically, and the direct inclusion readily follows from Lemma \ref{lem:approxgeotyp}.
The statement about the Hausdorff dimension follows from the countable stability of that functional, and the fact that $V$-paths are Lipschitz continuous with respect to the Euclidean metric.
\end{proof}

In the planar case $d=2$, the fact that geodesics do not pause en route allows to identify the geodesic frame of $\left(\mathbb{R}^2,T\right)$ as the set of points on roads $\mathcal{L}=\bigcup_{(\ell,v)\in\Pi}\ell$.
Moreover, recall the notion of a geodesic hub from the introduction.
We prove the following result:

\begin{prop}\label{prop:frameroads}
Almost surely, the geodesic frame of $\left(\mathbb{R}^2,T\right)$ is the set of points on roads $\mathcal{L}$.
Moreover, for every $x\in\mathcal{L}$, we have:
\begin{itemize}
\item if $x\notin\mathcal{I}$, then $x$ is a $2^+$-hub,

\item if $x\in\mathcal{I}$, then $x$ is a $4^+$-hub but not a $5^{+}$-star.
\end{itemize}
\end{prop}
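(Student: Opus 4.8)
The plan is to combine the confluence-of-geodesics toolbox (Proposition~\ref{prop:confluence}) with the approximation result (Lemma~\ref{lem:approxgeotyp}) and the structure of geodesic $V$-paths, treating separately the three claims: the frame equals $\mathcal{L}$, a point on a single road is a $2^+$-hub, and an intersection point is a $4^+$-hub but not a $5^+$-star. First I would settle the inclusions giving the frame. That the geodesic frame is contained in $\mathcal{L}$ is the content of Theorem~\ref{thm:pause}: if $x=\gamma(t)$ for some geodesic $\gamma:[0,\tau]\to\mathbb{R}^2$ and some $t\in{]0,\tau[}$, pick $t_1<t<t_2$ in ${]0,\tau[}$; by Theorem~\ref{thm:pause} there is $\varepsilon>0$ with $V(\gamma(s))\ge\varepsilon$ on $[t_1,t_2]$, so in particular $V(x)>0$, i.e.\ $x\in\mathcal{L}$. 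Conversely, to see $\mathcal{L}$ is contained in the frame, fix a road $(\ell,v)\in\Pi$ and a point $x\in\ell$; by item~\ref{item:confluenceroads} of Proposition~\ref{prop:confluence} (applied via the Mecke formula, and using that the bad set of $x\in\ell$ where the confluence fails is Lebesgue-null, hence has empty interior in $\ell$, so a dense set of good points exists and a short limiting argument covers all of $\ell$) any geodesic through a good point $x$ uses $(\ell,v)$ on both sides of $x$, so $x$ is strictly inside a geodesic; density of good points in $\ell$ together with closedness of $\ell\cap(\text{frame})$ — which follows since geodesics are Lipschitz and the frame is a countable union of geodesics between typical points (Proposition~\ref{prop:approxgeotyp}) — upgrades this to all of $\ell$. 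Actually the cleanest route is: every point of $\ell$ lies on the geodesic segment realised inside $\ell$ between two good points of $\ell$ on either side of it, since by the structure of geodesic $V$-paths a segment of $\ell$ traversed by a geodesic is traversed at the full speed $v$, hence is a geodesic.

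Next I would prove the $2^+$-hub statement for $x\in\mathcal{L}\setminus\mathcal{I}$, so $x$ lies on a unique road $(\ell,v)$. Take two distinct points $a,b\in\ell$ on opposite sides of $x$, close enough that $(\ell,v)$ is the fastest road of $\Pi$ within a ball $\overline B(x,\delta)$ containing $a,b$; then the straight segments from $x$ to $a$ and from $x$ to $b$ along $\ell$, traversed at speed $v$, are geodesics by the triangle inequality (any competing $V$-path from $x$ to $a$ must travel Euclidean distance $|x-a|$ while within $\overline B(x,\delta)$ the best available speed is $v$, forcing it to be the straight segment on $\ell$). These two geodesics meet only at $x$, and their concatenation is the straight segment $[a,b]\subset\ell$ traversed at speed $v$, which is again a geodesic by the same argument. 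Hence $x$ is a $2^+$-hub. For $x\in\mathcal{I}$, say $\ell_1\cap\ell_2=\{x\}$: choose $\delta$ so small that the third-fastest road of $\Pi$ within $\overline B(x,\delta)$ is slower than $\min(v_1,v_2)$, pick points $a_1,b_1\in\ell_1$ and $a_2,b_2\in\ell_2$ on either side of $x$ inside $\overline B(x,\delta)$, and run the same triangle-inequality argument on each of the four segments $[x,a_1],[x,b_1],[x,a_2],[x,b_2]$: each is a geodesic, they pairwise meet only at $x$, and the concatenation of any two of them is the corresponding straight segment of $\ell_1$, of $\ell_2$, or a bent path $[a_1,x]\cup[x,a_2]$ — and this bent path is a geodesic because, within $\overline B(x,\delta)$, driving along $\ell_1$ then $\ell_2$ at speeds $v_1,v_2$ beats any alternative (the only roads faster than the third-fastest are $\ell_1,\ell_2$ themselves, and no single road can connect $a_1$ to $a_2$ faster). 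So $x$ is a $4^+$-hub.

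It remains to show $x\in\mathcal{I}$ is not a $5^+$-star. By item~\ref{item:confluenceintersections}(b) of Proposition~\ref{prop:confluence}, any geodesic $V$-path $\gamma$ with $\gamma(0)=x$ must, for some $t_0>0$, satisfy $\gamma[0,t_0]\subset\ell_1$ or $\gamma[0,t_0]\subset\ell_2$. Thus every geodesic emanating from $x$ leaves along one of the four half-lines of $\ell_1\cup\ell_2$ at $x$; moreover two geodesics leaving along the \emph{same} half-line must coincide on an initial segment (again by the triangle inequality and the fact that the relevant road is the fastest nearby, so the initial common stretch is forced to be the straight segment), hence are not disjoint. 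Therefore at most $4$ pairwise disjoint geodesics can emanate from $x$, i.e.\ $x$ is not a $5^+$-star. \textbf{The main obstacle} I expect is not in this proposition at all — the genuinely hard content, deferred to the next subsection, is the complementary assertion in Theorem~\ref{thm:frame} that a point on $\mathcal{L}\setminus\mathcal{I}$ is \emph{not} a $3^+$-star, which requires ruling out exceptional points where a geodesic peels off a road without using an intersection; within the present Proposition~\ref{prop:frameroads} the only delicate point is the promotion of the ``almost every point of $\ell$'' confluence statements to ``every point of $\ell$'', which I would handle by the straight-segment-is-a-geodesic observation above rather than by a density/closure argument, since that observation is fully deterministic given the structure of geodesic $V$-paths.
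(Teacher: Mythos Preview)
Your treatment of the inclusions for the frame, of the $2^+$-hub claim for $x\notin\mathcal{I}$, and of the not-$5^+$-star claim is essentially the paper's argument and is fine (modulo the easy containment check that a competing geodesic cannot escape the small ball, which the paper makes explicit via a $3\delta$-versus-$\delta$ device).

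The genuine gap is in the $4^+$-hub argument at an intersection point $x=\ell_1\cap\ell_2$. Your triangle-inequality reasoning ``the best available speed nearby is $v$'' works for segments on the \emph{fastest} road $\ell_1$, but it does not transfer to segments on $\ell_2$, nor to the bent concatenations: a competing $V$-path from $a_2$ to $b_2$ (or from $a_1$ to $a_2$) may use $\ell_1$, whose speed $v_1>v_2$, so the crude lower bound $|\cdot|/v_2$ is unavailable. The claim ``no single road can connect $a_1$ to $a_2$ faster'' is also insufficient, since one must rule out paths that leave $\ell_1$ before reaching $x$, cut across on slower roads, and rejoin $\ell_2$ --- and nothing you wrote controls the time of such a crossing.

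The paper fills this gap with a different device. It passes to $(\varepsilon_n\vee V)$-geodesics and invokes the structure result of Proposition~\ref{prop:epsilongeo2}: once $\delta$ is small enough that the third-fastest road in the ball has speed at most $\varepsilon_n$, any $(\varepsilon_n\vee V)$-geodesic between the endpoints is a finite concatenation of segments on $\ell_1$, $\ell_2$, and straight speed-$\varepsilon_n$ cuts. It then establishes an explicit ``no shortcut'' inequality
\[
\frac{|a_1|}{v_1}+\frac{|a_2|}{v_2}<\frac{|a_1e_1-a_2e_2|}{\varepsilon_n}\qquad\text{for all }a_1,a_2\in\mathbb{R}^*,
\]
valid for $\varepsilon_n$ small enough depending on the angle between $\ell_1$ and $\ell_2$, which rules out every speed-$\varepsilon_n$ cut and forces the geodesic through $x$. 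This step --- choosing $\varepsilon_n$, verifying the inequality via a compactness argument on the ratio, and then case-checking the finitely many candidate shapes --- is where most of the work in the paper's proof sits, and it is missing from your proposal.
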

\begin{proof}
The fact that the geodesic frame of $\left(\mathbb{R}^2,T\right)$ is contained in $\mathcal{L}$ readily follows from the fact that geodesics do not pause en route (Theorem \ref{thm:pause}).
The converse inclusion is implied by the second part of the proposition, which we turn to proving now.
The fact that points $x\in\mathcal{I}$ are not $5^+$-stars follows from item \ref{item:confluenceintersections} of Proposition \ref{prop:confluence}.
Although the remaining statements are rather easy to conceive, proving them in detail is a bit long.
We set ${\varepsilon_n=2^{-n}}$ for all $n\in\mathbb{N}$.
Recall Proposition \ref{prop:epsilongeo2}, and note that almost surely, this structure result applies to geodesic $(\varepsilon_n\vee V)$-paths simultaneously for all $n\in\mathbb{N}$.
Now, fix a typical realisation of $\Pi$, and let $x\in\mathcal{L}$.
\begin{itemize}
\item If $x\notin\mathcal{I}$, then there is exactly one road $(\ell,v)\in\Pi$ that passes through $x$.
Now, fix a parameter $\delta>0$ to be adjusted, and let $y\neq z\in\ell\cap\partial B(x,\delta)$ be the endpoints of the line segment $\ell\cap\overline{B}(x,\delta)$.
We claim that for $\delta$ small enough, the $V$-path that drives from $y$ to $z$ using $(\ell,v)$ is geodesic.
This implies that $x$ is a $2^+$-hub.
See Figure \ref{fig:hubs} for an illustration.
Indeed, assume that $\delta$ is small enough so that $(\ell,v)$ is the fastest road of $\Pi$ that passes through $\overline{B}(x,3\delta)$.
Then, any geodesic $V$-path from $y$ to $z$ must be contained in ${\overline{B}(x,3\delta)}$.
Indeed, since $(\ell,v)$ is the fastest road of $\Pi$ within ${\overline{B}(x,3\delta)}$, the driving time of any $V$-path from $y$ to a point outside $\overline{B}(x,3\delta)$ is strictly greater than $2\delta/v$, which is an obvious upper bound for $T(y,z)$.
In turn, since $(\ell,v)$ is the fastest road of $\Pi$ within ${\overline{B}(x,3\delta)}$, the triangle inequality proves that the $V$-path that drives from $y$ to $z$ using $(\ell,v)$ is geodesic.

\begin{figure}[!ht]
\centering
\includegraphics[width=0.74\linewidth]{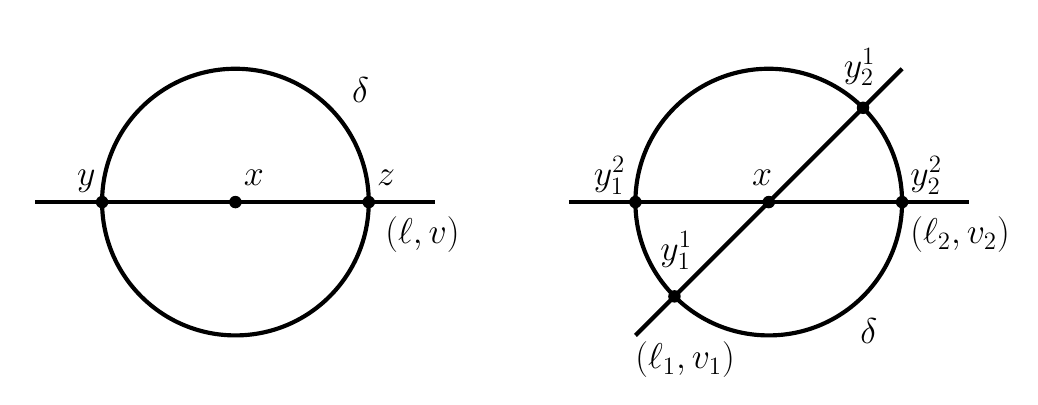}
\caption{Geodesic hubs in the set $\mathcal{L}$ of points on roads, in the planar case $d=2$.\\
\emph{Left:} If $x\notin\mathcal{I}$, then there is exactly one road $(\ell,v)\in\Pi$ that passes through $x$.
Fix $\delta>0$, and let $y\neq z\in\ell\cap\partial B(x,\delta)$ be the endpoints of the line segment $\ell\cap\overline{B}(x,\delta)$.
For $\delta$ small enough, the $V$-path that drives from $y$ to $z$ using $(\ell,v)$ is geodesic.\\
\emph{Right:} If $x\in\mathcal{I}$, then there is exactly two roads of $\Pi$ that pass through $x$, say $(\ell_1,v_1)$ and $(\ell_2,v_2)$, with $v_1>v_2$.
Fix $\delta>0$, and let $y^i_1\neq y^i_2\in\ell_i\cap\partial B(x,\delta)$ be the endpoints of the line segment $\ell_i\cap\overline{B}(x,\delta)$, for each $i\in\{1,2\}$.
For $\delta$ small enough, the following holds: for any indices $(i_1,j_1)\neq(i_2,j_2)\in\{1,2\}^2$, the $V$-path that drives from $y^{i_1}_{j_1}$ to $x$ using $\left(\ell_{i_1},v_{i_1}\right)$, and then from $x$ to $y^{i_2}_{j_2}$ using $\left(\ell_{i_2},v_{i_2}\right)$, is geodesic.}\label{fig:hubs}
\end{figure}

\item If $x\in\mathcal{I}$, then there is exactly two roads of $\Pi$ that pass through $x$, say $(\ell_1,v_1)$ and $(\ell_2,v_2)$, with $v_1>v_2$.
Now, fix a parameter $\delta>0$ to be adjusted, and let $y^i_1\neq y^i_2\in\ell_i\cap\partial B(x,\delta)$ be the endpoints of the line segment $\ell_i\cap\overline{B}(x,\delta)$, for each $i\in\{1,2\}$.
We claim that for $\delta$ small enough, the following holds: for any indices $(i_1,j_1)\neq(i_2,j_2)\in\{1,2\}^2$, the $V$-path that drives from $y^{i_1}_{j_1}$ to $x$ using $\left(\ell_{i_1},v_{i_1}\right)$, and then from $x$ to $y^{i_2}_{j_2}$ using $\left(\ell_{i_2},v_{i_2}\right)$, is geodesic.
This implies that $x$ is a $4^+$-hub.
See Figure \ref{fig:hubs} for an illustration.
Actually, the points $y^1_1$ and $y^1_2$ play the same role, as well as $y^2_1$ and $y^2_2$, so the values of $j_1$ and $j_2$ are not important: the relevant indices are $i_1$ and $i_2$.

The easiest case is when $i_1,i_2=1$, and can be handled in the exact same way as above.
To deal with the other cases, let us make the following observation: there exists $n\in\mathbb{N}$ large enough so that for any $z_1\in\ell_1\backslash\{x\}$ and $z_2\in\ell_2\backslash\{x\}$, driving from $z_1$ to $x$ using $(\ell_1,v_1)$ and then from $x$ to $z_2$ using $(\ell_2,v_2)$ is strictly faster than driving from $z_1$ to $z_2$ linearly at speed $\varepsilon_n$, i.e, we have
\begin{equation}\label{eq:noshortcut}
\frac{|a_1|}{v_1}+\frac{|a_2|}{v_2}<\frac{|a_1e_1-a_2e_2|}{\varepsilon_n}\quad\text{for all $a_1,a_2\in\mathbb{R}^*$,}
\end{equation}
where $e_1$ and $e_2$ are unit direction vectors of $\ell_1$ and $\ell_2$, respectively.
We will refer to this as the ``no shortcut'' condition.
Indeed, \eqref{eq:noshortcut} is equivalent to
\[\varepsilon_n<\frac{\sqrt{a_1^2-2\langle e_1,e_2\rangle\cdot a_1\cdot a_2+a_2^2}}{|a_1|/v_1+|a_2|/v_2}\quad\text{for all $a_1,a_2\in\mathbb{R}^*$,}\]
which holds if and only if
\[\varepsilon_n<\frac{\sqrt{1-2|\langle e_1,e_2\rangle|\cdot\rho+\rho^2}}{1/v_1+\rho/v_2}=:\phi(\rho)\quad\text{for all $\rho\in\mathbb{R}_+^*$.}\]
Now, observe that the function $\phi:\rho\in\mathbb{R}_+^*\mapsto\phi(\rho)$ is strictly positive and continuous, and that we have $\phi(\rho)\rightarrow v_1$ as $\rho\to0$ and $\phi(\rho)\rightarrow v_2$ as $\rho\to\infty$.
It follows that $\inf_{\rho\in\mathbb{R}_+^*}\phi(\rho)>0$, and thus we can fix $n\in\mathbb{N}$ large enough so that \eqref{eq:noshortcut} holds.
Note that we must have $\varepsilon_n<v_1$.
Now, assume that $\delta$ is small enough so that the third fastest road of $\Pi$ within ${\overline{B}(x,(1+2v_1/v_2)\delta)}$ has speed at most $\varepsilon_n$.
Then, for any indices ${i_1,i_2\in\{1,2\}}$, any geodesic $(\varepsilon_n\vee V)$-path from $y^{i_1}_1$ to $y^{i_2}_1$ must be contained in ${\overline{B}(x,(1+2v_1/v_2)\delta)}$.
Indeed, since $(\ell_1,v_1)$ is the fastest road of $\Pi$ within ${\overline{B}(x,(1+2v_1/v_2)\delta)}$, the driving time of any $(\varepsilon_n\vee V)$-path from $y^{i_1}_1$ to a point outside $\overline{B}(x,(1+2v_1/v_2)\delta)$ is strictly greater than $(2v_1/v_2\cdot\delta)/v_1=2\delta/v_2$, which is an obvious upper bound for $T_{\varepsilon_n}\left(y^{i_1}_1,y^{i_2}_2\right)$.

Now, let us consider the case $i_1=1$ and $i_2=2$.
By the structure results of Proposition \ref{prop:epsilongeo2}, there are three possibilities for a geodesic $(\varepsilon_n\vee V)$-path from $y^1_1$ to $y^2_1$:
\begin{itemize}
\item to drive from $y^1_1$ to $x$ using $(\ell_1,v_1)$, and then from $x$ to $y^2_1$ using $(\ell_2,v_2)$,

\item to drive from $y^1_1$ to a point $z\in\ell_1\backslash\{x\}$ using $(\ell_1,v_1)$, and then from $z$ to $y^2_1$ linearly at speed $\varepsilon_n$,

\item to drive from $y^1_1$ to a point $z\in\ell_2\backslash\{x\}$ linearly at speed $\varepsilon_n$, and then from $z$ to $y^2_1$ using $(\ell_2,v_2)$.
\end{itemize}
The second and third options are immediately ruled out by the ``no shortcut'' condition.
Thus, the $V$-path that drives from $y^1_1$ to $x$ using $(\ell_1,v_1)$, and then from $x$ to $y^2_1$ using $(\ell_2,v_2)$, is geodesic as an $(\varepsilon_n\vee V)$-path, hence as a $V$-path.

The case $i_1,i_2=2$ is not more difficult.
By the structure results of Proposition \ref{prop:epsilongeo2}, there are two possibilities for a geodesic $(\varepsilon_n\vee V)$-path from $y^2_1$ to $y^2_2$:
\begin{itemize}
\item to drive from $y^2_1$ to $y^2_2$ using $(\ell_2,v_2)$,

\item to drive from $y^2_1$ to a point $z_1\in\ell_1\backslash\{x\}$ linearly at speed $\varepsilon_n$, then from $z_1$ to a point $z_2\in\ell_1\backslash\{x\}$ using $(\ell_1,v_1)$, and then from $z_2$ to $y^2_2$ linearly at speed $\varepsilon_n$.
\end{itemize}
The second option is again ruled out by the ``no shortcut'' condition.
Indeed, 
\begin{itemize}
\item driving from $y^2_1$ to $x$ using $(\ell_2,v_2)$ is strictly faster than driving from $y^2_1$ to $x$ using $(\ell_2,v_2)$ and then from $x$ to $z_1$ using $(\ell_1,v_1)$, which by the ``no shortcut'' condition is strictly faster than driving from $y^2_1$ to $z_1$ linearly at speed $\varepsilon_n$,

\item driving from $x$ to $y^2_2$ using $(\ell_2,v_2)$ is strictly faster than driving from $z_2$ to $x$ using $(\ell_1,v_1)$ and then from $x$ to $y^2_2$ using $(\ell_2,v_2)$, which by the ``no shortcut'' condition is strictly faster than driving from $z_2$ to $y^2_2$ linearly at speed $\varepsilon_n$.
\end{itemize}
Therefore, the $V$-path that drives from $y^2_1$ to $y^2_2$ using $(\ell_2,v_2)$ is geodesic as an $(\varepsilon_n\vee V)$-path, hence as a $V$-path.
The proof of the proposition is complete.
\end{itemize}
\end{proof}

\subsection{Leaving a road by driving off-road is never geodesic}

Given Proposition \ref{prop:frameroads}, the only remaining point in Theorem \ref{thm:frame} is to prove that points $x \in \mathcal{L} \backslash \mathcal{I}$ are not $3^{+}$-stars, i.e, that  ``leaving a road by driving off-road is never geodesic''.
In view of the Mecke formula, it suffices to fix $(\ell_0,v_0)\in\mathbb{L}\times\mathbb{R}_+^*$, and to prove the following lemma, where $\Pi'=\Pi\oplus(\ell_0,v_0)$, and $V'$ denotes the ``speed limits'' function induced by $\Pi'$.

\begin{lem}\label{lem:offroad}
Almost surely, for each $x\in\ell_0$, if there exists a geodesic $V'$-path ${\gamma:[0,\tau]\rightarrow\mathbb{R}^2}$ with $\gamma(0)=x$ such that $\gamma(t)\notin\ell_0$ for all $t\in{]0,\tau]}$, then we must have $x\in\ell$ for some $(\ell,v)\in\Pi$.
\end{lem}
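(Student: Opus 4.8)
The plan is to argue by contradiction and reuse the $(\varepsilon\vee V)$-machinery of Section~\ref{sec:pause}. By the Mecke formula we fix $(\ell_0,v_0)$ and work with $\Pi'=\Pi\oplus(\ell_0,v_0)$, $V'$ and $T'$, and we must show that almost surely no $x\in\ell_0\setminus\mathcal{L}$ is \emph{bad}, where $x$ is bad if some geodesic $V'$-path $\gamma\colon[0,\tau]\to\mathbb{R}^2$ has $\gamma(0)=x$ and $\gamma(t)\notin\ell_0$ for all $t\in{]0,\tau]}$. First I would record a reduction: if $x$ is bad with witness $\gamma$, then $\gamma$ never uses $\ell_0$, so it is also a $V$-path for $\Pi$, whence its driving time is $\geq T(x,\gamma(\tau))\geq T'(x,\gamma(\tau))$; since $\gamma$ is $V'$-geodesic its driving time equals $T'(x,\gamma(\tau))$, so $T(x,\gamma(t))=T'(x,\gamma(t))$ along $\gamma$ and $\gamma$ is in fact a $T$-geodesic. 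Normalising $\ell_0=\mathbb{R}\times\{0\}$, $x=0$, and noting $\gamma$ cannot cross $\ell_0$, we may assume $\gamma(t)\in\{y_2>0\}$ for $t\in{]0,\varepsilon_0]}$. Because $x\notin\mathcal{L}$, a $T$-geodesic from $x$ must ``pause'' at $x$ in the sense of Theorem~\ref{thm:pause}: it uses infinitely many roads of $\Pi$, of speeds tending to $0$, over time intervals accumulating at $0$; in particular there is no well-defined direction in which $\gamma$ leaves $x$, which is why the argument cannot be purely local at $x$.

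The core is a variational input. Fix a small $t'>0$ and approximate $\gamma|_{[0,t']}$ by geodesic $(\varepsilon_n\vee V')$-paths $\gamma_n$ from $x$ to $\gamma(t')$ with $\varepsilon_n\to0$; by Proposition~\ref{prop:epsilongeo2} applied to $\Pi'$ each $\gamma_n$ runs off-road at speed $\varepsilon_n$, then along roads of $\Pi'$, then off-road at speed $\varepsilon_n$. Its off-road prefix has length at most $t'\varepsilon_n\to0$, so it reaches a first road of $\Pi'$ arbitrarily close to $x$. Since $x\in\ell_0$ but $x\notin\mathcal{L}$, that first road is either $\ell_0$ itself (and then optimality together with Lemma~\ref{lem:elementarydet} forces the off-road prefix to be trivial, so $\gamma_n$ uses $\ell_0$ starting at $x$), or a road $(\ell_n,v_n)\ne\ell_0$ reached from $x$ by a genuine off-road segment of unit direction $u_n$ landing at $p_n\in\ell_n$. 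In the latter case optimality and Lemma~\ref{lem:elementarydet} at $p_n$ give the Snell equality $|\langle u_n,e_n\rangle|=\varepsilon_n/v_n$; comparing $\gamma_n$ with paths that first run along $\ell_0$ from $x$ and then go off-road to $p_n$, and using minimality at the \emph{endpoint} $x$, gives the one-sided Snell inequality $|\langle u_n,e_0\rangle|\le\varepsilon_n/v_0$, where $e_0,e_n$ are unit directions of $\ell_0,\ell_n$. Hence $u_n\to(0,1)$, $|\langle u_n,e_n\rangle|\to0$ so $e_n\to\pm e_0$, and $\ell_n\to\ell_0$ in $\mathbb{L}$: a geodesic can only leave $\ell_0$ \emph{tangentially}, carried by roads whose lines degenerate onto $\ell_0$, at the critical Snell angle relative to each. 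Passing to the limit $n\to\infty$ (using convergence of the $\gamma_n$ and uniqueness of geodesics between typical points where needed), the first branch would force $\gamma$ to use $\ell_0$ starting at $x$, contradicting badness; so a bad $x$ must exhibit this limiting critical-tangential-escape configuration.

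It then remains to show that, almost surely, no $x\in\ell_0$ exhibits such a configuration. This I would do by a measure-zero computation in the spirit of Lemma~\ref{lem:epsilongeo2lem2}: the critical-tangential-escape condition ties $\ell_0$, $v_0$ and a sequence of roads $(\ell_n,v_n)$ of $\Pi$ converging to $\ell_0$ — together with their distances to $x$, their angles to $\ell_0$, and the landing points $p_n$ — through the relations $|\langle u_n,e_n\rangle|=\varepsilon_n/v_n$, and one checks via the Mecke formula and Fubini that for the fixed $(\ell_0,v_0)$ the event that some $x\in\ell_0$ supports such a sequence has probability $0$. Finally, since item~\ref{item:confluenceroads} of Proposition~\ref{prop:confluence} only yields the conclusion for almost every $x\in\ell_0$, I would upgrade ``almost every'' to ``every'' by a multiscale covering argument exactly as in the proof of Proposition~\ref{prop:K+stars}: cover $\ell_0\cap\overline{B}(0,R)$ by $O(r_n^{-1})$ sub-intervals of length $r_n$, use the scaling invariance~\eqref{eq:selfsimilarPi} of $\Pi$ to reduce each to a unit-scale event of small probability, and conclude with Borel--Cantelli.

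The main obstacle is the accumulation of segments of $\gamma$ at $x$: since $x\notin\mathcal{L}$ the geodesic genuinely pauses there, so one cannot speak of ``the direction $\gamma$ leaves $x$'' and must regularise by $(\varepsilon\vee V')$-paths, extract the launch directions $u_n$, apply the Snell lemma, and only then pass to the limit. Making this limiting step rigorous — in particular ruling out the ``$\gamma_n$ uses $\ell_0$ first'' branch in the limit, and checking that the per-scale probability in the multiscale step is genuinely summable despite the logarithmic corrections in the Hölder regularity of $T$ coming from Lemma~\ref{lem:confluencetool1} — is where the bulk of the work lies.
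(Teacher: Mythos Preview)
Your outline has a genuine gap at the step where you pass from the Snell relations to the conclusion that ``$\ell_n\to\ell_0$''. From the variational argument at $x$ you correctly get $|\langle u_n,e_0\rangle|\le\varepsilon_n/v_0\to0$, hence $u_n\to(0,\pm1)$; and at $p_n$ Lemma~\ref{lem:elementarydet} gives $|\langle u_n,e_n\rangle|=\varepsilon_n/v_n$. But to conclude $|\langle u_n,e_n\rangle|\to0$ you would need $\varepsilon_n/v_n\to0$, i.e.\ that the first road $\ell_n$ hit by $\gamma_n$ is much faster than $\varepsilon_n$. Proposition~\ref{prop:epsilongeo2} only guarantees $v_n>\varepsilon_n$, and since $p_n$ lies at Euclidean distance $O(\varepsilon_n)$ from an \emph{a priori exceptional} point $x$, there is no reason the first road should have speed $\gg\varepsilon_n$. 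Without $e_n\to\pm e_0$ you have no ``tangential escape'' constraint on the $(\ell_n,v_n)$, and the later ``measure-zero computation in the spirit of Lemma~\ref{lem:epsilongeo2lem2}'' collapses: the relations you have written down involve the $(\varepsilon_n\vee V')$-geodesics $\gamma_n$, which depend on the whole of $\Pi$, not on a single pair of roads, so a Mecke/Fubini codimension count of the type used in Lemma~\ref{lem:epsilongeo2lem2} does not apply. A second gap is the limit passage: in the proof of Theorem~\ref{thm:pause} the link between $\gamma$ and $\gamma_n$ is made through the cut points furnished by item~\ref{item:confluencetyp} of Proposition~\ref{prop:confluence} at a \emph{typical} endpoint; here $x$ is on $\ell_0$ and is precisely the exceptional point under study, so you have no confluence to force $\gamma_n$ and $\gamma$ to agree near $x$, and branch~(A) (each $\gamma_n$ uses $\ell_0$ first) does not contradict badness of $\gamma$ in the limit, since the portion of $\gamma_n$ on $\ell_0$ can shrink to a point.

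The paper's proof follows a different and essentially quantitative route. It introduces the \emph{vertical speed} $\phi(\ell,v)=|\langle\ell,\ell_0^\perp\rangle|\cdot v$ and, at each scale $r_k$ around a candidate $x$, builds a ``cage'' out of the roads of largest vertical speed surrounding $x$ (Figure~\ref{fig:cage}). A geodesic reaching $x$ without touching $\ell_0$ must cross this cage; comparing its driving time with the alternative path that runs down the cage to $\ell_0$ and then along $\ell_0$ to $x$ forces one of two bad events: either there is an abnormally small gap between consecutive vertical speeds near $x$ (the event $E_n$, whose probability is bounded by a direct Poisson computation using \eqref{eq:expsmallgap}), or there is an abnormally fast road of $\Pi$ at scale $r_n$ around $x$. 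The second alternative is handled by Lemma~\ref{lem:anormalroutes}, itself a non-trivial result showing that on the line $\ell_0$ the set $\{x:\ V^x_{r_n}\ge r_n^\alpha\ \text{for all large }n\}$ is exactly $\bigcup_{(\ell,v)\in\Pi}\ell_0\cap\ell$. The union bound over a covering of $\ell_0\cap\overline{B}(0,R)$ then gives the ``every $x$'' statement. This argument never needs to identify the direction in which $\gamma$ leaves $x$, and in particular avoids the limit passage that your proposal leaves open.
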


The rest of this subsection is devoted to proving this result.
Let us first present the overall idea of the proof to help the reader follow the arguments. 
The key idea is to consider the ``vertical speed'' $\phi(\ell,v)=\left|\left\langle\ell,\ell_0^\perp\right\rangle\right|\cdot v$ of roads $(\ell,v)\in\Pi$ around a point $x\in\ell_0$.
In fact, we can sort the roads $((\ell_i,v_i)\,;\,i \geq 1)$ intersecting say the ball of radius $1$ around $x$ by decreasing vertical speed. 
Let us assume now that the first few of these roads $((\ell_i,v_i)\,;\,i\in\llbracket1,k\rrbracket)$ have created a circuit $ \mathrm{C}_{ \varepsilon}$ surrounding the point $x$ at scale $ \varepsilon$, see Figure \ref{fig:cage}. 
Then, consider a geodesic $V'$-path $\gamma$ targeting $x$ and suppose that it does not use $(\ell_0,v_0)$.
Let $y$ be the last intersection point of $\gamma$ with the circuit $\mathrm{C}_{ \varepsilon}$, denote by $h$ its distance to $\ell_0$, and set $\gamma'$ to be the alternative path that drives to $\ell_0$ by using the roads of the circuit $ \mathrm{C}_{ \varepsilon}$, and then straight to $x$ using $(\ell_{0},v_{0})$.
By definition, the restriction $\gamma_{x,y}$ of $\gamma$ to the segment between $x$ and $y$ must be quicker than the alternative path $\gamma'$, and we shall reach a contradiction arguing differently according to $h$:

\begin{figure}[!h]
\begin{center}
\includegraphics[width=12cm]{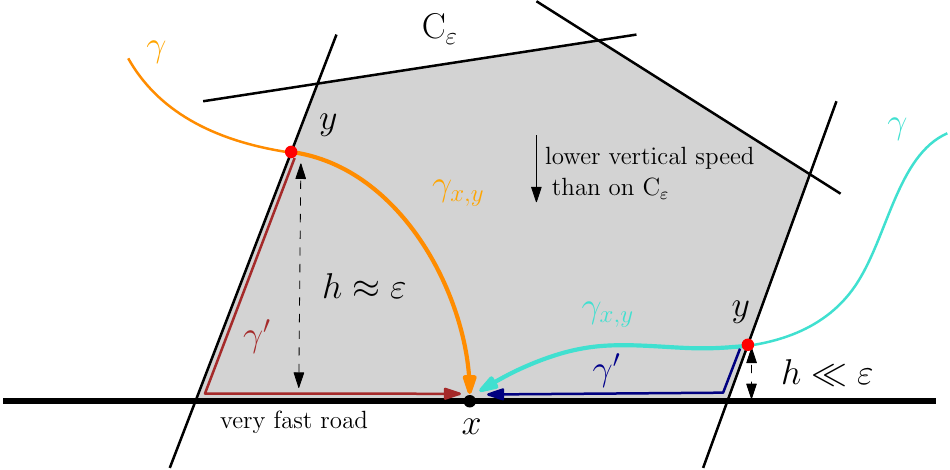}
\caption{Illustration of the idea of the proof of Lemma \ref{lem:offroad}.
All the roads passing through the polygon delimited by $\mathrm{C}_{\varepsilon}$ have smaller vertical speed than the roads constituting the circuit.\label{fig:cage}}
\end{center}
\end{figure}

\begin{itemize}
\item Either $h$ is comparable to $\varepsilon$ (see the orange curve on Figure \ref{fig:cage}). 
Then, since $\gamma'$ is longer than $\gamma_{x,y}$, the latter must use a road within the polygon delimited by $\mathrm{C}_\varepsilon$ whose vertical speed is very close to the smallest vertical speed of roads constituting the circuit: this is because $\gamma'$ uses roads with greater vertical speed than $\gamma_{x,y}$ to drive to $\ell_0$, and the driving time of the segment on $\ell_{0}$ is negligible {(typically, the largest vertical speed of a road within a ball of radius $\varepsilon$ is of order $\varepsilon^{1/(\beta-1)}$, whereas $v_0$ is a constant)}.
We then argue that it is not possible to have such a small gap between the vertical speed of roads in the vicinity of $x$.

\item Or $h$ is small compared to $\varepsilon$ (see the blue curve on Figure \ref{fig:cage}). 
Then, since $\gamma'$ is longer than $\gamma_{x,y}$, the latter must use a very fast road within the polygon delimited by $\mathrm{C}_\varepsilon$, because $\gamma'$ is short as it mainly uses the very fast road $(\ell_{0},v_{0})$. To rule out that possibility, we shall see in the coming Lemma \ref{lem:anormalroutes} that it is not possible to find fast roads at every scale when zomming in around $x \in \ell_{0}$, unless $x$ is at an intersection with a road of $\Pi$.
\end{itemize}

Let us first make the last statement precise.
Throughout this subsection, we use the following notation: for $x\in\mathbb{R}^2$ and $r>0$, we denote by $V^x_r=\sup_{\overline{B}(x,r)}V$ the maximal speed of a road that passes through $\overline{B}(x,r)$.
Note that $(\ell_{0},v_{0})$ does not count in $V^x_r$.
Typically, the quantity $V^x_r$ is of order $r^{1/(\beta-1)}$. 
The atypical behavior of these quantities have studied in depth in \cite[Section 4]{moa1fractal}. 
Although the most natural way to have $V^x_r$ atypically large for all sufficiently small $r$ is to choose $x\in \mathcal{L}$, Proposition 4.2 in \cite{moa1fractal} shows that for each $\alpha\in{]0,1/(\beta-1)[}$, almost surely, there are exceptional points $x\in\left.\mathbb{R}^2\middle\backslash\mathcal{L}\right.$ such that $V^x_r \geq r^\alpha$ for all sufficiently small $r$. 
We show however that those pathological points cannot be found if we restrict attention to the line $\ell_{0}$.

\begin{lem}\label{lem:anormalroutes}
Fix $\rho\in{]0,1[}$, and set $r_k=\rho^k$ for all $k\in\mathbb{N}$.
For each ${\alpha\in{]0,1/(\beta-1)[}}$, almost surely, we have
\[\left\{x\in\ell_0:\text{$V^x_{r_n}\geq r_n^\alpha$ for all sufficiently large $n$}\right\}=\bigcup_{(\ell,v)\in\Pi}\ell_0\cap\ell.\]
\end{lem}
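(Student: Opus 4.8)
The inclusion $\supseteq$ is essentially immediate: if $x\in\ell_0\cap\ell$ for some road $(\ell,v)\in\Pi$, then $V^x_{r}\geq v$ for all $r>0$, so certainly $V^x_{r_n}\geq r_n^\alpha$ for all large $n$. The real content is the inclusion $\subseteq$, and the plan is to show that for each fixed $\alpha\in{]0,1/(\beta-1)[}$, almost surely there is no point $x\in\ell_0\setminus\mathcal L$ with $V^x_{r_n}\geq r_n^\alpha$ for all large $n$. Since $\ell_0$ is a one-dimensional set, I would cover it by intervals of length $r_n$ (say, $I\subset\ell_0$ an interval of length $1$, covered by $\asymp r_n^{-1}$ subintervals $J$ of length $r_n$) and estimate, for a fixed $n$ and a fixed subinterval $J$ centred at $x_J$, the probability that there is a road of speed $\geq (2r_n)^\alpha$ through $\overline B(x_J,2r_n)$. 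By the normalisation \eqref{eq:normgeo}, the number of roads through $\overline B(x_J,2r_n)$ with speed $\geq (2r_n)^\alpha$ is Poisson with parameter $(2r_n)^{d-1}\cdot\big((2r_n)^\alpha\big)^{-(\beta-1)}=(2r_n)^{d-1-\alpha(\beta-1)}$ (here $d=2$), so this probability is at most $(2r_n)^{1-\alpha(\beta-1)}$. Note $1-\alpha(\beta-1)>0$ precisely because $\alpha<1/(\beta-1)$; call this exponent $\kappa>0$.

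Now the key point: if $x\in\ell_0\setminus\mathcal L$ satisfies $V^x_{r_n}\geq r_n^\alpha$ for all large $n$, then for every large $n$ the subinterval $J\ni x$ witnesses a road of speed $\geq r_n^\alpha\geq (2r_n)^\alpha$ (for $n$ large, using $2^\alpha r_n^\alpha\le r_{n-1}^\alpha$ after shifting the index, or simply comparing at scale $2r_n\le r_{n-1}$) through $\overline B(x_J,2r_n)$. Hence the existence of a bad point forces, for every large $n$, at least one of the $\asymp r_n^{-1}$ subintervals to be "occupied". By the union bound, the probability that some subinterval of $I$ of length $r_n$ is occupied at scale $n$ is at most $C r_n^{-1}\cdot (2r_n)^{\kappa}=C' r_n^{\kappa-1}$. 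This is too weak to sum in $n$ directly (the exponent $\kappa-1$ is negative), so instead I would use a two-scale / sparseness trick: run the covering argument at scales $r_{n}$ but only keep scales $n$ along a geometric subsequence — or better, observe that a bad point occupies a subinterval at scale $r_n$ \emph{and} at scale $r_{m}$ for all $m\geq n$, and the events "$J$ occupied at scale $r_n$" for the \emph{same} spatial location at two well-separated scales $n$ and $2n$ involve roads through $\overline B(x_J,2r_n)$ resp. $\overline B(x_J,2r_{2n})$ of speed $\geq r_n^\alpha$ resp. $\geq r_{2n}^\alpha$; these are increasingly restrictive and, crucially, the relevant Poisson counts at scale $r_{2n}$ are governed by intensity $(2r_{2n})^\kappa = (2r_n)^{2\kappa}\cdot 2^{-\kappa}$ roughly, giving a product bound. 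Concretely, I would bound the probability that a fixed subinterval of length $r_N$ is occupied at \emph{all} scales $r_n$, $N\le n\le 2N$, but that is automatic only along the subsequence; the clean route is: the probability that some point of $I$ is bad is at most, for each $N$, $\sum_{J:\,|J|=r_N}\mathbb P(\text{$J$ occupied at scale } r_N)\le C r_N^{-1}(2r_N)^\kappa$, and since this holds for the index $N$ \emph{and} we only need it along a sparse subsequence $N_j=2^j$ where $r_{N_j}^{\kappa-1}=\rho^{(\kappa-1)2^j}\to 0$ super-exponentially, Borel--Cantelli applies. Then intersect over a countable dense set of $\alpha$'s, or note monotonicity in $\alpha$, to get the full statement for all $\alpha<1/(\beta-1)$ simultaneously, and finally take a countable union over a sequence of intervals $I$ exhausting $\ell_0$.

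The main obstacle I anticipate is precisely making the Borel--Cantelli step rigorous: a naive single-scale union bound gives probability $\asymp r_n^{\kappa-1}\to\infty$, so one genuinely needs to exploit that a bad point is bad at \emph{infinitely many} (in fact all large) scales and combine information across scales. The cleanest implementation is to fix the sparse subsequence $n_j$ growing fast enough that $r_{n_j}^{1-\kappa}$ (equivalently $\rho^{(1-\kappa)n_j}$) has convergent sum, which works as soon as $n_j\to\infty$ and $\kappa<1$, e.g. $n_j=j^2$: then $\sum_j \mathbb P(\exists\, J\subset I,\ |J|=r_{n_j},\ J\text{ occupied at scale }n_j)\le \sum_j C r_{n_j}^{\kappa-1}=\sum_j C\rho^{(\kappa-1)j^2}<\infty$, wait — $\kappa-1<0$ so $\rho^{(\kappa-1)j^2}=\rho^{-(1-\kappa)j^2}\to\infty$; one must instead use that the exponent on the ball radius can be pushed: for a bad point one may take the witnessing road to pass through a ball of radius $2r_{n_j}$ but with speed $\ge r_{n_{j-1}}^\alpha$ when $n_{j-1}\le$ (the true scale), i.e. play radius and speed off each other so the Poisson intensity is $r_{n_j}^{d-1}\cdot r_{n_{j-1}}^{-\alpha(\beta-1)}$, and choosing $n_j/n_{j-1}\to\infty$ makes the exponent $(d-1)n_j - \alpha(\beta-1)n_{j-1}$ as large as we wish relative to the covering count $n_j$. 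This is the delicate bookkeeping that the proof must get right; everything else (Poisson tail via \eqref{eq:normgeo}, union bound over subintervals, Borel--Cantelli, monotonicity in $\alpha$, countable union over $I$) is routine.
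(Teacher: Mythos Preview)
You correctly handle the inclusion $\supseteq$ and correctly identify the obstruction to a naive first-moment argument: with $\kappa:=1-\alpha(\beta-1)\in{]0,1[}$, covering a unit segment of $\ell_0$ by $\sim r_n^{-1}$ subintervals gives a union bound of order $r_n^{\kappa-1}\to\infty$. But none of your repairs closes the gap. Passing to a sparse subsequence $n_j$ changes nothing, since $r_{n_j}^{\kappa-1}$ still diverges no matter how fast $n_j$ grows. Your ``radius/speed trade-off'' would require a road through a ball of radius $\sim r_{n_j}$ with speed $\ge r_{n_{j-1}}^\alpha$; the bad-point condition only supplies a road of speed $\ge r_{n_j}^\alpha$ through that small ball, and the witnessing road at the coarser scale $n_{j-1}$ need not pass through it, so the Poisson parameter $r_{n_j}^{d-1}\cdot r_{n_{j-1}}^{-\alpha(\beta-1)}$ bounds nothing relevant. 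More fundamentally, the occupation events at different scales are strongly positively correlated---a single road close to $x$ witnesses occupation over a whole range of consecutive scales---so no independence-type product of single-scale bounds is available.

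The paper's proof takes a genuinely different route: rather than controlling how many intervals are occupied, it controls how many \emph{roads} ever participate in covering the bad set. Writing $\mathcal X_n$ for the set of $x\in\ell_0$ that are bad at every scale in $\llbracket n_0,n\rrbracket$, one sets $\chi(\ell,v)=1$ when $[\ell]_{r_m}$ meets $\mathcal X_m$ at the first scale $m$ with $v\ge r_m^\alpha$, and introduces the weighted count $\Phi_n=\sum_{(\ell,v)\in\Pi_n}\chi(\ell,v)\,\psi(\ell)$, where $\psi(\ell)$ records the scale-invariant length of $\ell_0\cap[\ell]_r$. A conditional first-moment computation gives $\mathbb E[\Phi_{n+1}-\Phi_n\mid\mathcal F_n]\lesssim r_n^{2\varepsilon/3}\,\Phi_n$ with $\varepsilon=\kappa$, whence a conditional Markov inequality plus Borel--Cantelli yields $\Phi_{n+1}-\Phi_n\le r_n^{\varepsilon/3}\Phi_n$ eventually and $\sup_n\Phi_n<\infty$ a.s. It follows that only finitely many roads ever participate, and $\bigcap_n\mathcal X_n$ is then contained in the finite set of intersection points $\ell_0\cap\ell$ for those roads. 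The idea missing from your proposal is precisely this switch from counting occupied intervals to counting covering roads, together with the recursive (branching-process-type) control of that count.
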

\begin{proof}
The converse inclusion is clear.
In the other direction, fix ${\alpha\in{]0,1/(\beta-1)[}}$, and let us show that for each $R>0$, almost surely, we have
\[\left\{x\in\ell_0\cap\overline{B}(0,R):\text{$V^x_{r_n}\geq r_n^\alpha$ for all sufficiently large $n$}\right\}\subset\bigcup_{(\ell,v)\in\Pi}\ell_0\cap\ell.\]
Without loss of generality by the invariance in distribution of $\Pi$ under rotations and translations, we assume that $\ell_0=\mathbb{R}\times\{0\}$.
Fix $n_0\in\mathbb{N}$, and for every $n\geq n_0$, let
\[\mathcal{X}_n=\left\{x\in\ell_0\cap\overline{B}(0,R):\text{$V^x_{r_k}\geq r_k^\alpha$ for all $k\in\llbracket n_0,n\rrbracket$}\right\}.\]
We want to prove that almost surely, we have 
\begin{equation}\label{eq:goal1lemanormalroutes1}
\bigcap_{n\geq n_0}\mathcal{X}_n\subset\bigcup_{(\ell,v)\in\Pi}\ell_0\cap\ell.
\end{equation}
Let $\chi:\left(\mathbb{L}\times\mathbb{R}_+^*\right)\times\mathbb{M}\rightarrow\{0,1\}$ be the indicator function such that ${\chi(\ell,v;\Pi)=1}$ if and only if there exists $n\geq n_0$ such that $v\geq r_n^\alpha$ and $[\ell]_{r_n}$ meets $\mathcal{X}_n$ (i.e, such that $(\ell,v)$ takes part in $\mathcal{X}_n$).
Note that $\chi(\ell,v;\Pi)=1$ is equivalent to $[\ell]_{r_m}$ meets $\mathcal{X}_m$, where $m$ is the smallest integer $n\geq n_0$ such that $v\geq r_n^\alpha$.
We will refer to this as the ``key property'' of $\chi$ below.
To prove \eqref{eq:goal1lemanormalroutes1}, it suffices to show that
\begin{equation}\label{eq:finite}
\sum_{(\ell,v)\in\Pi}\chi(\ell,v;\Pi)<\infty\quad\text{almost surely.}
\end{equation}
Indeed, if \eqref{eq:finite} holds, then let $\mathcal{R}$ be the set of roads $(\ell,v)\in\Pi$ such that ${\chi(\ell,v;\Pi)=1}$.
By definition, we have $\mathcal{X}_n\subset\bigcup_{(\ell,v)\in\mathcal{R}}\ell_0\cap[\ell]_{r_n}$ for all $n\geq n_0$, hence
\[\bigcap_{n\geq n_0}\mathcal{X}_n\subset\bigcap_{n\geq n_0}\left(\bigcup_{(\ell,v)\in\mathcal{R}}\ell_0\cap[\ell]_{r_n}\right)=\bigcup_{(\ell,v)\in\mathcal{R}}\ell_0\cap\ell,\]
where the last equality holds because $\mathcal{R}$ is finite.

Now, let us dive into the proof of \eqref{eq:finite}.
For every $n\geq n_0$, let $\Pi_n$ be the restriction of $\Pi$ to the set of roads with speed at least $r_n^\alpha$, and let $\Pi_n'$ be the restriction of $\Pi$ to the set of roads with speed less than $r_n^\alpha$ and at least $r_{n+1}^\alpha$.
Moreover, let $\mathcal{F}_n$ be the $\sigma$-algebra generated by $\Pi_n$.
Note that $\Pi_n'$ is independent of $\mathcal{F}_n$.
We introduce
\[\Phi_n=\sum_{(\ell,v)\in\Pi_n}\chi(\ell,v;\Pi)\cdot\psi(\ell),\]
where $\psi:\mathbb{L}\rightarrow[1,\infty]$ is the function such that for all $\ell\in\mathbb{L}$, the half length of the line segment $\ell_0\cap[\ell]_r$ equals $\psi(\ell)\cdot r$.
See Figure \ref{fig:psi} for an illustration.
By the key property of $\chi$, we have
\begin{equation}\label{eq:keychi}
\chi(\ell,v;\Pi)=\chi(\ell,v;\Pi_n)\quad\text{for all $(\ell,v)\in\Pi_n$.}
\end{equation}
In particular, it follows that $\Phi_n$ is $\mathcal{F}_n$-measurable.
Since $\psi(\ell)\geq1$ for all $\ell\in\mathbb{L}$, we have
\[\sum_{(\ell,v)\in\Pi_n}\chi(\ell,v;\Pi)\leq\Phi_n\quad\text{for all $n\geq n_0$.}\]
Therefore, to obtain \eqref{eq:finite}, it suffices to prove that $\sup_{n\geq n_0}\Phi_n<\infty$ almost surely.
To this end, we will show that almost surely, we have
\begin{equation}\label{eq:goallemanormalroutes2}
\Phi_{n+1}-\Phi_n\leq r_n^{\varepsilon/3}\cdot\Phi_n\quad\text{for all sufficiently large $n$,}
\end{equation}
where $\varepsilon=1-\alpha(\beta-1)>0$.

\begin{figure}
\centering
\includegraphics[width=0.7\linewidth]{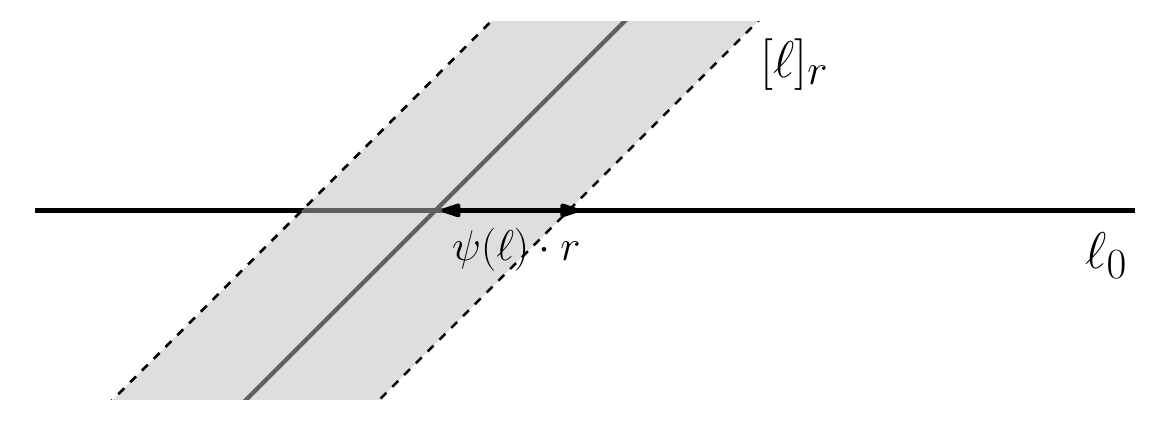}
\caption{In gray, we have represented the $r$-neighbourhood $[\ell]_r=\left\{x\in\mathbb{R}^2:d(x,\ell)\leq r\right\}$ of $\ell$. Its trace on $\ell_0$ is a line segment whose half length equals $\psi(\ell)\cdot r$, by definition.}\label{fig:psi}
\end{figure}

Fix $n\geq n_0$.
We have
\begin{equation}\label{eq:DeltaPhin}
\Phi_{n+1}-\Phi_n=\sum_{(\ell',v')\in\Pi_n'}\chi(\ell',v';\Pi).
\end{equation}
Furthermore, for each $(\ell',v')\in\Pi_n'$, we have
\begin{equation}\label{eq:chi}
\chi(\ell',v';\Pi)\leq\sum_{(\ell,v)\in\Pi_n}\chi(\ell,v;\Pi)\cdot\mathbf{1}\left(\text{$[\ell']_{r_{n+1}}$ meets $\ell_0\cap[\ell]_{r_n}$}\right).
\end{equation}
Indeed, if $\chi(\ell',v';\Pi)=1$, then by the key property of $\chi$, there must exist $x\in\mathcal{X}_{n+1}$ such that $d(x,\ell')\leq r_{n+1}$.
Next, since $x\in\mathcal{X}_n$, there must exist $(\ell,v)\in\Pi_n$ such that $d(x,\ell)\leq r_n$.
In particular, we have $\chi(\ell,v;\Pi)=1$.
Moreover, since $x$ belongs both to $[\ell']_{r_{n+1}}$ and to $\ell_0\cap[\ell]_{r_n}$, we conclude that $[\ell']_{r_{n+1}}$ meets $\ell_0\cap[\ell]_{r_n}$.
This justifies \eqref{eq:chi}.
Plugging \eqref{eq:chi} into \eqref{eq:DeltaPhin}, we deduce that
\[\Phi_{n+1}-\Phi_n\leq\sum_{(\ell',v')\in\Pi_n'}\sum_{(\ell,v)\in\Pi_n}\chi(\ell,v;\Pi)\cdot\mathbf{1}\left(\text{$[\ell']_{r_{n+1}}$ meets $\ell_0\cap[\ell]_{r_n}$}\right)\cdot\psi(\ell')=:\Psi_n.\]
Now, we want to compute $\mathbb{E}[\Psi_n\,|\,\mathcal{F}_n]$.
By \eqref{eq:keychi}, we have
\[\Psi_n=\sum_{(\ell',v')\in\Pi_n'}\sum_{(\ell,v)\in\Pi_n}\chi(\ell,v;\Pi_n)\cdot\mathbf{1}\left(\text{$[\ell']_{r_{n+1}}$ meets $\ell_0\cap[\ell]_{r_n}$}\right)\cdot\psi(\ell'),\]
hence
\begin{eqnarray*}
\lefteqn{\mathbb{E}[\Psi_n\,|\,\mathcal{F}_n]}\\
&=&c\int_\mathbb{L}\int_{r_{n+1}^\alpha}^{r_n^\alpha}\sum_{(\ell,v)\in\Pi_n}\chi(\ell,v;\Pi_n)\cdot\mathbf{1}\left(\text{$[\ell']_{r_{n+1}}$ meets $\ell_0\cap[\ell]_{r_n}$}\right)\cdot\psi(\ell')\cdot v^{-\beta}\mathrm{d}v\mathrm{d}\mu(\ell'),
\end{eqnarray*}
where $c=2^{-1}\cdot(\beta-1)$ is the multiplicative constant in the intensity measure of $\Pi$.
Unfortunately, for every $\ell\in\mathbb{L}$, we have
\[\int_\mathbb{L}\psi(\ell')\cdot\mathbf{1}\left(\text{$[\ell']_{r_{n+1}}$ meets $\ell_0\cap[\ell]_{r_n}$}\right)\mathrm{d}\mu(\ell')=\infty.\]
To prevent this degeneracy, we add the following cutoff.
Consider the bad event $B_n$: ``there exists a road $(\ell',v')\in\Pi_n'$ with ${\psi(\ell')\geq1/r_n}$ such that $[\ell']_{r_{n+1}}$ meets $\ell_0\cap\overline{B}(0,R)$''.
We claim that the function $\psi$ has the following ``key property'' with respect to this cutoff: there exists a constant $C$ such that
\begin{equation}\label{eq:bad}
\mathbb{P}(B_n)\leq C\cdot r_n^{1+\varepsilon},
\end{equation}
and such that for every $\ell\in\mathbb{L}$, we have
\begin{equation}\label{eq:psi}
\int_{\psi(\ell')\leq1/r_n}\psi(\ell')\cdot\mathbf{1}(\text{$[\ell']_{r_{n+1}}$ meets $\ell_0\cap[\ell]_{r_n}$})\mathrm{d}\mu(\ell')\leq C\cdot\psi(\ell)\cdot r_n\cdot\ln(1/r_n).
\end{equation}
Provided that this holds, let us conclude the argument.
On the complement of the bad event $B_n$, we have
\[\Psi_n=\sum_{\substack{(\ell',v')\in\Pi_n'\\\psi(\ell')\leq1/r_n}}\sum_{(\ell,v)\in\Pi_n}\chi(\ell,v;\Pi_n)\cdot\mathbf{1}\left(\text{$[\ell']_{r_{n+1}}$ meets $\ell_0\cap[\ell]_{r_n}$}\right)\cdot\psi(\ell')=:\Psi_n',\]
where
\begin{eqnarray*}
\lefteqn{\mathbb{E}[\Psi_n'\,|\,\mathcal{F}_n]}\\
&=&c\int_{\psi(\ell')\leq1/r_n}\int_{r_{n+1}^\alpha}^{r_n^\alpha}\sum_{(\ell,v)\in\Pi_n}\chi(\ell,v;\Pi_n)\cdot\mathbf{1}\left(\text{$[\ell']_{r_{n+1}}$ meets $\ell_0\cap[\ell]_{r_n}$}\right)\cdot\psi(\ell')\cdot v^{-\beta}\mathrm{d}v\mathrm{d}\mu(\ell').
\end{eqnarray*}
By \eqref{eq:psi}, we obtain that
\[\mathbb{E}[\Psi_n'\,|\,\mathcal{F}_n]\leq C'\cdot r_n\cdot\ln(1/r_n)\cdot r_n^{-\alpha(\beta-1)}\cdot\sum_{(\ell,v)\in\Pi_n}\chi(\ell,v;\Pi_n)\cdot\psi(\ell),\]
for some constant $C'$.
Now, by the definition of $\varepsilon$, we have 
\[r_n\cdot\ln(1/r_n)\cdot r_n^{-\alpha(\beta-1)}=o\left(r_n^{2\varepsilon/3}\right)\quad\text{as $n\to\infty$.}\]
In particular, there exists a constant $C''$ such that ${r_n\cdot\ln(1/r_n)\cdot r_n^{-\alpha(\beta-1)}\leq C''\cdot r_n^{2\varepsilon/3}}$ for all $n\geq n_0$, and we obtain
\begin{equation}\label{eq:Psin'}
\mathbb{E}[\Psi_n'\,|\,\mathcal{F}_n]\leq C''\cdot r_n^{2\varepsilon/3}\cdot\Phi_n.
\end{equation}
Finally, we write
\[\mathbb{P}\left(\Phi_{n+1}-\Phi_n>r_n^{\varepsilon/3}\cdot\Phi_n\right)\leq\mathbb{P}(B_n)+\mathbb{P}\left(\Psi_n'>r_n^{\varepsilon/3}\cdot\Phi_n\right).\]
The first term is controlled by \eqref{eq:bad}, and for the second term, by \eqref{eq:Psin'} and by Lemma \ref{lem:markovcond} below, we have
\[\mathbb{P}\left(\Psi_n'>r_n^{\varepsilon/3}\cdot\Phi_n\right)\leq C''\cdot r_n^{\varepsilon/3}.\]
We deduce that
\[\sum_{n\geq n_0}\mathbb{P}\left(\Phi_{n+1}-\Phi_n>r_n^{\varepsilon/3}\cdot\Phi_n\right)<\infty,\]
which proves \eqref{eq:goallemanormalroutes2} by the Borel--Cantelli lemma.

\begin{lem}\label{lem:markovcond}
Let $X$ and $Y$ be nonnegative random variables.
Suppose that $X$ is measurable with respect to some sub-$\sigma$-algebra $\mathcal{F}$, and that there exists a constant $C$ such that
\[\mathbb{E}[Y\,|\,\mathcal{F}]\leq C\cdot X\quad\text{almost surely.}\]
Then, we have
\[\mathbb{P}(Y>\lambda\cdot X)\leq\frac{C}{\lambda}\quad\text{for all $\lambda\in\mathbb{R}_+^*$.}\]
\end{lem}
\begin{proof}[Proof of the lemma]
Let $\lambda\in\mathbb{R}_+^*$.
As $\mathbb{P}(Y>\lambda\cdot X)=\lim_{\eta\downarrow0}\mathbb{P}(Y\geq\eta+\lambda\cdot X)$, it suffices to prove that for each $\eta>0$, we have $\mathbb{P}(Y\geq\eta+\lambda\cdot X)\leq C/\lambda$.
In fact, for each $\eta>0$, we have $\mathbb{P}(Y\geq\eta+\lambda\cdot X)=\mathbb{E}[\mathbb{P}(Y\geq\eta+\lambda\cdot X\,|\,\mathcal{F})]$, with
\[\mathbb{P}(Y\geq\eta+\lambda\cdot X\,|\,\mathcal{F})\leq\frac{\mathbb{E}[Y\,|\,\mathcal{F}]}{\eta+\lambda\cdot X}\leq\frac{C\cdot X}{\eta+\lambda\cdot X}\leq\frac{C}{\lambda}\quad\text{almost surely,}\]
by the conditional Markov inequality.
\end{proof}

Finally, it remains to check the key property of $\psi$, namely that \eqref{eq:bad} and \eqref{eq:psi} hold.
Let us start with \eqref{eq:bad}.
We have
\[\mathbb{P}(B_n)\leq c\cdot\mu\left\{\ell'\in\mathbb{L}:\text{$\psi(\ell')\geq1/r_n$ and $[\ell']_{r_{n+1}}$ meets $\ell_0\cap\overline{B}(0,R)$}\right\}\cdot\int_{r_{n+1}^\alpha}^{r_n^\alpha}v^{-\beta}\mathrm{d}v,\]
where $c=2^{-1}\cdot(\beta-1)$ is the multiplicative constant in the intensity measure of $\Pi$.
To compute the right hand side, recall the definition of the measure $\mu$ from Section \ref{sec:rappelsgeo}.
In the planar case $d=2$, the group $\mathbf{SO}\left(\mathbb{R}^2\right)$ consists of the rotations 
\[\left(g_\theta:z\in\mathbb{R}^2=\mathbb{C}\mapsto e^{i\theta}\cdot z,\,\theta\in[-\pi,\pi]\right).\]
Moreover, we have
\[\int_{\mathbf{SO}\left(\mathbb{R}^2\right)}\varphi(g)\mathrm{d}g=\frac{1}{2\pi}\int_{-\pi}^\pi\varphi(g_\theta)\mathrm{d}\theta\quad\text{for all Borel functions $\varphi:\mathbf{SO}\left(\mathbb{R}^2\right)\rightarrow\mathbb{R}_+$.}\]
Note that when $\ell'=g_\theta(w+\ell_0)$, we have
\[\ell_0\cap[\ell']_r=\left\{\left(-\frac{y}{\sin\theta},0\right)\,;\,y\in[w-r,w+r]\right\}\quad\text{and}\quad\psi(\ell')=\frac{1}{|\sin\theta|}.\]
It follows that
\begin{eqnarray*}
\lefteqn{\mu\left\{\ell'\in\mathbb{L}:\text{$\psi(\ell')\geq1/r_n$ and $[\ell']_{r_{n+1}}$ meets $\ell_0\cap\overline{B}(0,R)$}\right\}}\\
&=&\frac{1}{2\pi}\int_{1/|\sin\theta|\geq1/r_n}\int_{-\infty}^\infty\mathbf{1}\left(\exists y\in[w-r_{n+1},w+r_{n+1}]:\left|\frac{y}{\sin\theta}\right|\leq R\right)\mathrm{d}w\mathrm{d}\theta\\
&\leq&\frac{1}{2\pi}\int_{|\sin\theta|\leq r_n}\int_{-\infty}^\infty\mathbf{1}\left(|w|\leq R\cdot|\sin\theta|+r_{n+1}\right)\mathrm{d}w\mathrm{d}\theta\\
&=&\frac{1}{2\pi}\int_{|\sin\theta|\leq r_n}2\cdot(R\cdot|\sin\theta|+r_{n+1})\mathrm{d}\theta\\
&\leq&\frac{1}{2\pi}\int_{|\sin\theta|\leq r_n}2\cdot(R\cdot r_n+r_n)\mathrm{d}\theta=\frac{R+1}{\pi}\cdot r_n\cdot\int_{|\sin\theta|\leq r_n}\mathrm{d}\theta.
\end{eqnarray*}
In particular, there exists a constant $C=C(R)$ such that
\[\mu\left\{\ell'\in\mathbb{L}:\text{$\psi(\ell')\geq1/r_n$ and $[\ell']_{r_{n+1}}$ meets $\ell_0\cap\overline{B}(0,R)$}\right\}\leq C\cdot r_n^2.\]
We deduce that there exists a constant $C'$ such that $\mathbb{P}(B_n)\leq C'\cdot r_n^2\cdot r_n^{-\alpha(\beta-1)}$, which yields \eqref{eq:bad}.
Finally, let us check \eqref{eq:psi}.
Fix $\ell\in\mathbb{L}$, and without loss of generality by the invariance of $\mu$ under translations, assume that $\ell_0\cap\ell=\{0\}$.
In particular, we have 
\[\ell_0\cap[\ell]_{r_n}=\{(x,0)\,;\,x\in[-\psi(\ell)\cdot r_n,\psi(\ell)\cdot r_n]\}.\]
It follows that
\begin{eqnarray*}
\lefteqn{\int_{\psi(\ell')\leq1/r_n}\psi(\ell')\cdot\mathbf{1}\left(\text{$[\ell']_{r_{n+1}}$ meets $\ell_0\cap[\ell]_{r_n}$}\right)\mathrm{d}\mu(\ell')}\\
&=&\frac{1}{2\pi}\int_{1/|\sin\theta|\leq1/r_n}\int_{-\infty}^\infty\frac{1}{|\sin\theta|}\cdot\mathbf{1}\left(\exists y\in[w-r_{n+1},w+r_{n+1}]:\left|\frac{y}{\sin\theta}\right|\leq\psi(\ell)\cdot r_n\right)\mathrm{d}w\mathrm{d}\theta\\
&\leq&\frac{1}{2\pi}\int_{|\sin\theta|\geq r_n}\int_{-\infty}^\infty\frac{1}{|\sin\theta|}\cdot\mathbf{1}\left(|w|\leq\psi(\ell)\cdot r_n\cdot|\sin\theta|+r_{n+1}\right)\mathrm{d}w\mathrm{d}\theta\\
&=&\frac{1}{2\pi}\int_{|\sin\theta|\geq r_n}\frac{1}{|\sin\theta|}\cdot2\cdot(\psi(\ell)\cdot r_n\cdot|\sin\theta|+r_{n+1})\mathrm{d}\theta\\
&\leq&\frac{1}{2\pi}\int_{|\sin\theta|\geq r_n}\frac{1}{|\sin\theta|}\cdot2\cdot(\psi(\ell)\cdot r_n+\psi(\ell)\cdot r_n)\mathrm{d}\theta=\frac{2}{\pi}\cdot\psi(\ell)\cdot r_n\cdot\int_{|\sin\theta|\geq r_n}\frac{\mathrm{d}\theta}{|\sin\theta|}.
\end{eqnarray*}
Equation \eqref{eq:psi} readily follows, which completes the proof.
\end{proof}

Now armed with Lemma \ref{lem:anormalroutes}, we can come to the proof of Lemma \ref{lem:offroad}.

\begin{proof}[Proof of Lemma \ref{lem:offroad}]
Without loss of generality, we assume that $\ell_0=\mathbb{R}\times\{0\}$ and $v_0=1$.
Set ${r_n=5^{-n}}$ for all $n\in\mathbb{N}$.
Moreover, fix a parameter $\zeta\in{]0,1[}$ to be adjusted, and set ${v_n=\zeta\cdot r_n^{1/(\beta-1)}}$ for all $n\in\mathbb{N}$.
Finally, fix parameters $\alpha_1<\alpha_2<\alpha_3\in{]0,1[}$ to be adjusted, and consider the following claim.

\smallskip

\emph{\underline{Claim.}} There exists $\varepsilon>0$ such that for all sufficiently large $n$, on the complement of a bad event $B_n$ that has probability at most $r_n^{1+\varepsilon}$, the following holds: if there exists a geodesic $V'$-path ${\gamma:[0,\tau]\rightarrow\mathbb{R}^2}$ from a point outside $\overline{B}\left(0,r_n^{\alpha_1}\right)$ to a point on $\ell_0\cap\overline{B}(0,r_n)$ such that 
\[\gamma[0,\tau]\cap\ell_0\subset\overline{B}(0,r_n),\]
then there must exist a road of $\Pi$ with speed at least $v_n^{\alpha_3}$ that passes through $\overline{B}\left(0,2r_n\right)$.

\smallskip

First, let us explain how this claim yields the result of the lemma.
Fix $\delta>0$, and let $\mathcal{X}$ be the set of points $x\in\ell_0$ for which there exists a geodesic $V'$-path $\gamma:[0,\tau]\rightarrow\mathbb{R}^2$ from $x$ to a point outside $\overline{B}(x,\delta)$ such that $\gamma(t)\notin\ell_0$ for all $t\in{]0,\tau]}$.
Let us show that for each $R>0$, almost surely, we have 
\begin{equation}\label{eq:goallemoffroad}
\mathcal{X}\cap\overline{B}(0,R)\subset\bigcup_{(\ell,v)\in\Pi}\ell_0\cap\ell.
\end{equation}
For every $n\in\mathbb{N}$, let $\left(B(z,r_n)\,;\,z\in Z_n\right)$ be a covering of $\ell_0\cap\overline{B}(0,R)$ by balls of radius $r_n$, with centres $z\in\ell_0\cap\overline{B}(0,R)$ at least $r_n$ apart from each other.
By a measure argument, we have $\#Z_n\leq C\cdot r_n^{-1}$ for some constant $C$.
Now, for all $n$ large enough so that $2r_n^{\alpha_1}\leq\delta$, by the triangle inequality, we have
\[\mathcal{X}\cap\overline{B}(0,R)\subset\bigcup_{z\in\mathcal{Z}_n}\ell_0\cap B(z,r_n),\]
where $\mathcal{Z}_n$ denotes the set of points $z\in Z_n$ for which there exists a geodesic $V'$-path ${\gamma:[0,\tau]\rightarrow\mathbb{R}^2}$ from a point outside $\overline{B}\left(z,r_n^{\alpha_1}\right)$ to a point on ${\ell_0\cap\overline{B}(z,r_n)}$ such that 
\[\gamma[0,\tau]\cap\ell_0\subset\overline{B}(z,r_n).\]
Then, for every $z\in Z_n$, let $B_n^z$ be the bad event $B_n$, but for the pushforward $\Pi\circ f_{-z,1}^{-1}$ in place of $\Pi$, where $f_{x,r}$ is the map of \eqref{eq:defscaling}.
By \eqref{eq:selfsimilarPi}, we have $\mathbb{P}\left(B_n^z\right)=\mathbb{P}(B_n)$.
Now, by the union bound, we have
\[\mathbb{P}\left(\exists z\in Z_n:\text{$B_n^z$ is realised}\right)\leq\sum_{z\in Z_n}\mathbb{P}(B_n^z)=\#Z_n\cdot\mathbb{P}(B_n)\leq C\cdot r_n^{-1}\cdot r_n^{1+\varepsilon}=C\cdot r_n^\varepsilon.\]
By the Borel--Cantelli lemma, we deduce that almost surely, for all sufficiently large $n$, there is no point $z\in Z_n$ for which the bad event $B_n^z$ is realised.
Thus, for all sufficiently large $n$, we have
\[\mathcal{Z}_n\subset\left\{z\in Z_n:V^z_{2r_n}\geq v_n^{\alpha_3}\right\}.\]
It follows that
\[\mathcal{X}\cap\overline{B}(0,R)\subset\left\{x\in\ell_0:\text{$V^x_{3r_n}\geq v_n^{\alpha_3}$ for all sufficiently large $n$}\right\},\]
which yields \eqref{eq:goallemoffroad} by Lemma \ref{lem:anormalroutes}.

\smallskip

Now, let us dive into the proof of the claim.
We introduce the ``vertical speed'' function ${\phi:\mathbb{L}\times\mathbb{R}_+^*\rightarrow\mathbb{R}_+}$, defined by $\phi(\ell,v)=\left|\left\langle\ell,\ell_0^\perp\right\rangle\right|\cdot v$ for all ${(\ell,v)\in\mathbb{L}\times\mathbb{R}_+^*}$.
Note that we have $\phi(\ell,v)\leq v$ for all $(\ell,v)\in\mathbb{L}\times\mathbb{R}_+^*$.
In particular, for every $\phi_0>0$, there are finitely many roads $(\ell,v)\in\Pi$ with $\phi(\ell,v)\geq\phi_0$ that pass through $\overline{B}(0,r_n^{\alpha_1})$.
Then, for each $k\in\mathbb{N}$, we define $R_k^1,R_k^2,R_k^3,R_k^4$ as the four rectangles that are represented in Figure \ref{fig:circuit}, and we denote by $\Pi_k$ the restriction of $\Pi$ to the set of roads $(\ell,v)$ with speed $v\geq v_k$ for which there exists $i\in\llbracket1,4\rrbracket$ such that $\ell$ crosses $R_k^i$ in the length direction (i.e, passes through both shorter sides of the rectangle).
Note that there exists a constant $c$ such that
\begin{equation}\label{eq:cstc}
\phi(\ell,v)\geq c\cdot v_k\quad\text{for all $(\ell,v)\in\Pi_k$.}
\end{equation}
Finally, let $A_k$ be the good event: ``for each $i\in\llbracket1,4\rrbracket$, there is a road of $\Pi$ with speed at least $v_k$ that crosses $R_k^i$ in the length direction''.
We call $k$ a ``good scale'' if the event $A_k$ is realised.

\begin{figure}[!ht]
\centering
\includegraphics[width=0.7\linewidth]{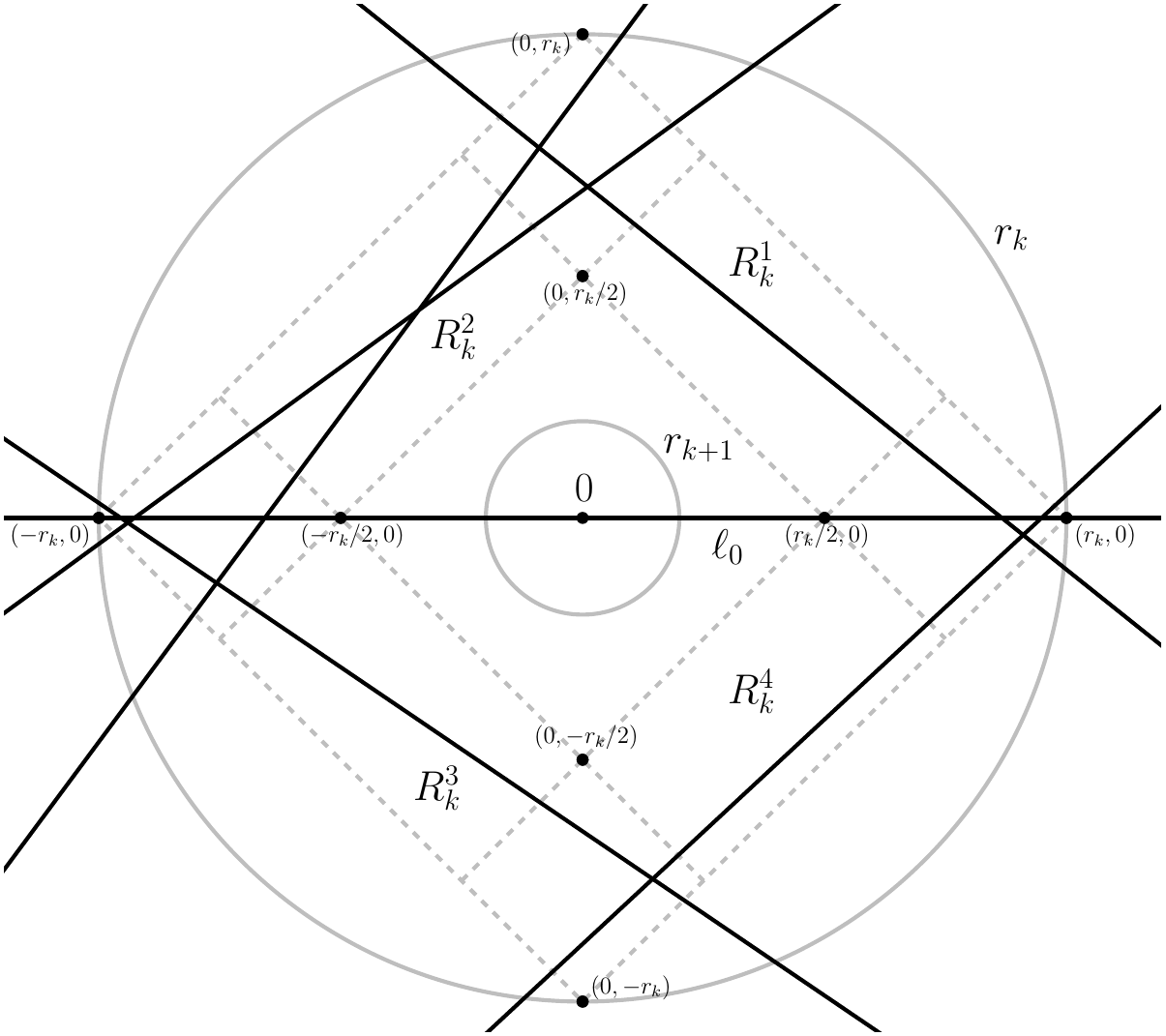}
\caption{Definition of the rectangles $R_k^1,R_k^2,R_k^3,R_k^4$, and illustration of the good event $A_k$. In this example, the event $A_k$ is realised, hence $k$ is a good scale. Note that there exists a constant $c$ such that for every line $\ell\in\mathbb{L}$ that crosses $R_k^i$ in the length direction for some $i\in\llbracket1,4\rrbracket$, we have $\left|\left\langle\ell,\ell_0^\perp\right\rangle\right|\geq c$. Moreover, note that every line $\ell\in\mathbb{L}$ that crosses $R_k^i$ in the length direction for some $i\in\llbracket1,4\rrbracket$ passes through $\overline{B}(0,r_k)$ but does not pass through $\overline{B}(0,r_{k+1})$.}\label{fig:circuit}
\end{figure}

At this point, let us deal with the technical matter of ajudsting our parameters.
First, we choose $\alpha_1<\alpha_2<\alpha_3$ close enough to $1$ so that
\begin{equation}\label{eq:tech1}
\alpha_1-\frac{\alpha_3}{\beta-1}\cdot(\beta-2)+(\alpha_1-1)>0
\end{equation}
and
\begin{equation}\label{eq:tech2}
\alpha_1>1-\frac{\alpha_3}{\beta-1}.
\end{equation}
Next, we set $\phi_n^0=c\cdot v_n^{\alpha_2}$, where $c$ is the constant of \eqref{eq:cstc}.
Then, using \eqref{eq:tech1}, we fix $j\in\mathbb{N}^*$ large enough so that
\begin{equation}\label{eq:expsmallgap}
\left(\alpha_1-\frac{\alpha_3}{\beta-1}\cdot(\beta-2)+(\alpha_1-1)\right)\cdot j>1.
\end{equation}
Now that $j$ has been fixed, note that by \eqref{eq:tech2}, the following holds for all sufficiently large $n$:
\begin{equation}\label{eq:remember}
\frac{r_n}{\phi_n^0}+j\cdot2r_n^{\alpha_1}\leq\frac{r_n}{v_n^{\alpha_3}}.
\end{equation}
Finally, we adjust $\zeta$ so that the following bad event, say $D_n$, has probability at most $r_n^2$ for all sufficiently large $n$: ``there is no good scale between $\alpha_1n$ and $\alpha_2n$''.
To this end, note that for each $k\in\mathbb{N}$, the event $A_k$ is measurable with respect to the restriction of $\Pi$ to the set of roads that pass through $\overline{B}(0,r_k)$ but do not pass through $\overline{B}(0,r_{k+1})$, by the definition of the $R_k^i$ (see Figure \ref{fig:circuit}).
Moreover, by the definition of $v_k$, we have ${\mathbb{P}(A_k)=\mathbb{P}(A_0)=:p(\zeta)}$.
It follows that 
\[\mathbb{P}(D_n)=p(\zeta)^{\lfloor\alpha_2n\rfloor-\lceil\alpha_1n\rceil+1}.\]
Since ${\lim_{\zeta\downarrow0}p(\zeta)=1}$, we can fix $\zeta$ small enough so that $p(\zeta)^{\alpha_2-\alpha_1}<1/10$.
We deduce that for all sufficiently large $n$, we have
\[\mathbb{P}(D_n)\leq10^{-n}=r_n^2.\]
From now on, we assume that $n$ is large enough so that \eqref{eq:remember} holds.
With this setup, consider the following implication.

\smallskip

\emph{\underline{Implication.}} On the complement of the bad event $D_n$, if there exists a geodesic $V'$-path ${\gamma:[0,\tau]\rightarrow\mathbb{R}^2}$ from a point outside $\overline{B}\left(0,r_n^{\alpha_1}\right)$ to a point on $\ell_0\cap\overline{B}(0,r_n)$ such that 
\[\gamma[0,\tau]\cap\ell_0\subset \overline{B}(0,r_n),\]
and if there is no road of $\Pi$ with speed at least $v_n^{\alpha_3}$ that passes through $\overline{B}(0,2r_n)$, then the following bad event, say $E_n$, must be realised: ``there exists $j$ roads $(\ell,v)\in\Pi$ that pass through $\overline{B}(0,r_n^{\alpha_1})$ such that $1/\phi(\ell,v)\in\left[\left.1\middle/\phi_n^0\right.,\left.1\middle/\phi_n^0\right.+j\cdot\left.2r_n^{\alpha_1}\middle/r_n\right.\right]$''.

\smallskip

Provided that this implication holds, suppose furthermore that there exists $\varepsilon>0$ such that
\begin{equation}\label{eq:smallgapevent}
\mathbb{P}(E_n)\leq r_n^{1+\varepsilon}\quad\text{for all sufficiently large $n$.}
\end{equation}
Then, the claim we are after readily follows, with $B_n=D_n\cup E_n$.
Therefore, to complete the proof of the lemma, it suffices to prove the implication, and check that \eqref{eq:smallgapevent} holds.

We start with the implication.
Suppose that there exists a good scale $k$ between $\alpha_1n$ and $\alpha_2n$, and that there exists a geodesic $V'$-path ${\gamma:[0,\tau]\rightarrow\mathbb{R}^2}$ from a point outside $\overline{B}\left(0,r_n^{\alpha_1}\right)$ to a point on $\ell_0\cap\overline{B}(0,r_n)$ such that ${\gamma[0,\tau]\cap\ell_0\subset\overline{B}(0,r_n)}$.
Moreover, assume that there is no road of $\Pi$ with speed at least $v_n^{\alpha_3}$ that passes through $\overline{B}(0,2r_n)$, and let us prove that then, the bad event $E_n$ must be realised.
Let $\phi_n^1>\ldots>\phi_n^j\in\left]0,\phi_n^0\right[$ be defined by
\[\frac{1}{\phi_n^i}=\frac{1}{\phi_n^0}+i\cdot\frac{2r_n^{\alpha_1}}{r_n}\quad\text{for all $i\in\llbracket1,j\rrbracket$.}\]
We will prove that for each $i\in\llbracket1,j\rrbracket$, there must exist a road $(\ell,v)\in\Pi$ that passes through $\overline{B}(0,r_n^{\alpha_1})$ such that
\[\frac{1}{\phi(\ell,v)}\in\left]\frac{1}{\phi_n^{i-1}},\frac{1}{\phi_n^i}\right].\]
Fix $i\in\llbracket1,j\rrbracket$.
We denote by $\mathcal{R}$ be the set of roads $(\ell,v)\in\Pi$ that pass through $\overline{B}(0,r_n^{\alpha_1})$ and have vertical speed $\phi(\ell,v)\geq\phi_n^{i-1}$.
Note that $\mathcal{R}$ is finite, and contains the roads of $\Pi_k$.
Indeed, since $k$ is between $\alpha_1n$ and $\alpha_2n$, each road $(\ell,v)\in\Pi_k$ passes through $\overline{B}(0,r_n^{\alpha_1})$, and has vertical speed $\phi(\ell,v)\geq c\cdot v_n^{\alpha_2}=\phi_n^0\geq\phi_n^{i-1}$, by \eqref{eq:cstc}.
Since $k$ is a good scale, it follows that the roads of $\mathcal{R}$ induce a convex polygon $P\subset\overline{B}(0,r_n^{\alpha_1})$, which formally is defined as the intersection of the half-spaces $\left\{x-\lambda\cdot\mathrm{proj}_\ell(0)\,;\,\text{$x\in\ell$ and $\lambda\geq0$}\right\}$, over the roads $(\ell,v)\in\mathcal{R}$.
This polygon $P$ has the following key property: for any point $x$ on its boundary $\partial P$, and for any point $y\in\ell_0\cap\overline{B}(0,r_n)$, one can use the roads of $\mathcal{R}$ to drive from $x$ to a point $z$ on $\ell_0\cap\overline{B}(0,r_n^{\alpha_1})$, in time at most $\left.d(x,\ell_0)\middle/\phi_n^{i-1}\right.$, and then use the road $(\ell_0,1)$ to drive from $z$ to $y$, in time at most $2r_n^{\alpha_1}$ by the triangle inequality.
See Figure \ref{fig:circuit2} for an illustration.

\begin{figure}[!ht]
\centering
\includegraphics[width=0.7\linewidth]{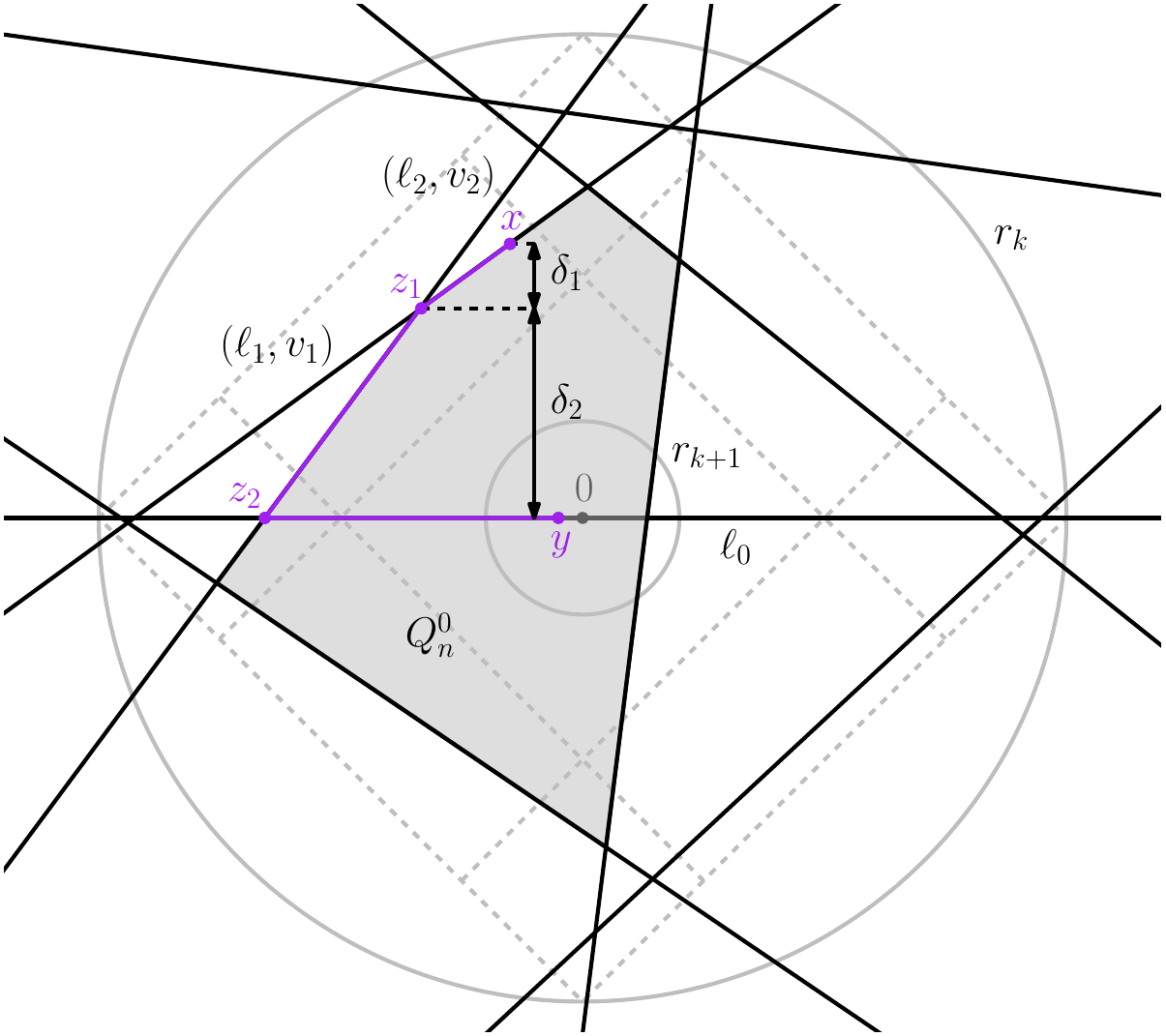}
\caption{We reproduce Figure \ref{fig:circuit}, assuming furthermore that $k$ is between $\alpha_1n$ and $\alpha_2n$. Then, we consider all the roads $(\ell,v)\in\Pi$ with $\phi(\ell,v)\geq\phi_n^{i-1}$ that pass through $\overline{B}(0,r_n^{\alpha_1})$, that is, the roads of $\mathcal{R}$. In this example, this adds exactly two more roads to the picture. The convex polygon $P$ containing $0$ induced by the roads of $\mathcal{R}$ is represented here in gray. Let us prove its key property on the example illustrated here. With the notation introduced on the picture, one can use $(\ell_1,v_1)$ to drive from $x$ to $z_1$, in time $\delta_1/\phi(\ell_1,v_1)\leq\left.\delta_1\middle/\phi_n^{i-1}\right.$, then use $(\ell_2,v_2)$ to drive from $z_1$ to $z_2$, in time ${\delta_2/\phi(\ell_2,v_2)\leq\left.\delta_2\middle/\phi_n^{i-1}\right.}$, and finally use $(\ell_0,1)$ to drive from $z_2$ to $y$, in time ${|z_2-y|\leq2r_n^{\alpha_1}}$. 
The total driving time of this $V'$-path is at most ${\left.(\delta_1+\delta_2)\middle/\phi_n^0\right.+2r_n^{\alpha_1}}=d(x,\ell_0)+2r_n^{\alpha_1}$.}\label{fig:circuit2}
\end{figure}

By \eqref{eq:remember}, we have 
\[\frac{r_n}{\phi_n^{i-1}}=\frac{r_n}{\phi_n^0}+(i-1)\cdot2r_n^{\alpha_1}\leq\frac{r_n}{v_n^{\alpha_3}},\] 
hence $\phi_n^{i-1}\geq v_n^{\alpha_3}$.
In particular, since $\partial P$ consists of roads of $\mathcal{R}$, that have speed at least $\phi_n^{i-1}$, the circuit $\partial P$ must lie outside $\overline{B}(0,2r_n)$.
Let us denote by $t\in{]0,\tau[}$ the last time at which $\gamma$ crosses $\partial P$.
As $\gamma$ is geodesic, the key property of $P$ (see Figure \ref{fig:circuit2}) implies that
\begin{equation}\label{eq:vertical1}
\tau-t\leq\frac{d(\gamma(t),\ell_0)}{\phi_n^{i-1}}+2r_n^{\alpha_1}.
\end{equation}
On the other hand, observe that over the time interval $[t,\tau]$, the $V'$-path $\gamma$ must travel a distance at least $r_n$ by using the roads of $\Pi$ within the ball $\overline{B}(0,2r_n)$.
This yields
\begin{equation}\label{eq:obvious}
\tau-t\geq\frac{r_n}{V^0_{2r_n}}>\frac{r_n}{v_n^{\alpha_3}}.
\end{equation}
By \eqref{eq:remember}, we have 
\[\frac{r_n}{\phi_n^{i-1}}+2r_n^{\alpha_1}=\frac{r_n}{\phi_n^0}+i\cdot2r_n^{\alpha_1}\leq\frac{r_n}{v_n^{\alpha_3}}.\]
In particular, combining this with \eqref{eq:vertical1} and \eqref{eq:obvious}, we must have ${d(\gamma(t),\ell_0)>r_n}$.
Then, we complement \eqref{eq:vertical1} with the bound
\begin{equation}\label{eq:vertical2}
\tau-t\geq\frac{d(\gamma(t),\ell_0)}{\phi(\ell,v)},
\end{equation}
where $(\ell,v)$ denotes the road of $\Pi$ with maximal vertical speed used by $\gamma$ over the time interval $[t,\tau]$.
By the definition of $t$, the road $(\ell,v)$ passes through $\overline{B}(0,r_n^{\alpha_1})$, and has vertical speed $\phi(\ell,v)<\phi_n^{i-1}$.
Moreover, combining \eqref{eq:vertical1} and \eqref{eq:vertical2}, we obtain that
\[\frac{d(\gamma(t),\ell_0)}{\phi(\ell,v)}\leq\frac{d(\gamma(t),\ell_0)}{\phi_n^{i-1}}+2r_n^{\alpha_1},\]
hence
\[\frac{1}{\phi(\ell,v)}\leq\frac{1}{\phi_n^{i-1}}+\frac{2r_n^{\alpha_1}}{d(\gamma(t),\ell_0)}\leq\frac{1}{\phi_n^{i-1}}+\frac{2r_n^{\alpha_1}}{r_n}=\frac{1}{\phi_n^i}.\]
This concludes the proof of the implication.

Finally, to complete the proof of the lemma, it remains to check \eqref{eq:smallgapevent}.
By definition, we have
\[E_n=\left(\Psi_n\left[\frac{1}{\phi_n^0},\frac{1}{\phi_n^0}+j\cdot\frac{2r_n^{\alpha_1}}{r_n}\right]\geq j\right),\]
where $\Psi_n$ denotes the pushforward by $1/\phi$ of the restriction of $\Pi$ to the set of roads that pass through $\overline{B}\left(0,r_n^{\alpha_1}\right)$.
By the mapping theorem, the random measure $\Psi_n$ is a Poisson process with intensity measure $c'\cdot r_n^{\alpha_1}\cdot w^{\beta-2}\mathrm{d}w$ on $\mathbb{R}_+^*$, for some constant $c'$.
Indeed, for every Borel function ${\varphi:\mathbb{R}_+^*\rightarrow\mathbb{R}_+}$, we have
\begin{eqnarray*}
\lefteqn{\int_{\left\langle\overline{B}\left(0,r_n^{\alpha_1}\right)\right\rangle}\int_{\mathbb{R}_+^*}\varphi\left(\frac{1}{\phi(\ell,v)}\right)\cdot v^{-\beta}\mathrm{d}v\mathrm{d}\mu(\ell)}\\
&=&\int_{\left\langle\overline{B}\left(0,r_n^{\alpha_1}\right)\right\rangle}\int_0^\infty\varphi\left(\frac{1}{\left|\left\langle\ell,\ell_0^\perp\right\rangle\right|\cdot v}\right)\cdot v^{-\beta}\mathrm{d}v\mathrm{d}\mu(\ell)\\
&=&\int_{\left\langle\overline{B}\left(0,r_n^{\alpha_1}\right)\right\rangle}\int_0^\infty\varphi(w)\cdot\left(\frac{1}{\left|\left\langle\ell,\ell_0^\perp\right\rangle\right|\cdot w}\right)^{-\beta}\cdot\frac{\mathrm{d}w}{\left|\left\langle\ell,\ell_0^\perp\right\rangle\right|\cdot w^2}\mathrm{d}\mu(\ell)\\
&=&\int_{\left\langle\overline{B}\left(0,r_n^{\alpha_1}\right)\right\rangle}\left|\left\langle\ell,\ell_0^\perp\right\rangle\right|^{\beta-1}\mathrm{d}\mu(\ell)\cdot\int_0^\infty\varphi(w)\cdot w^{\beta-2}\mathrm{d}w,
\end{eqnarray*}
with, by the self-similarity of $\mu$ under scaling:
\[\int_{\left\langle\overline{B}\left(0,r_n^{\alpha_1}\right)\right\rangle}\left|\left\langle\ell,\ell_0^\perp\right\rangle\right|^{\beta-1}\mathrm{d}\mu(\ell)=\int_{\left\langle\overline{B}(0,1)\right\rangle}\left|\left\langle\ell,\ell_0^\perp\right\rangle\right|^{\beta-1}\mathrm{d}\mu(\ell)\cdot r_n^{\alpha_1}.\]
Using the inequality $\mathbb{P}(\mathrm{Poisson}(\lambda)\geq j)\leq\lambda^j$, it follows that
\[\mathbb{P}(E_n)\leq\left(c'\cdot r_n^{\alpha_1}\cdot\int_{\left.1\middle/\phi_n^0\right.}^{\left.1\middle/\phi_n^0\right.+j\cdot\left.2r_n^{\alpha_1}\middle/r_n\right.}w^{\beta-2}\mathrm{d}w\right)^j.\]
Finally, since
\[\int_{\left.1\middle/\phi_n^0\right.}^{\left.1\middle/\phi_n^0\right.+j\cdot\left.2r_n^{\alpha_1}\middle/r_n\right.}w^{\beta-2}\mathrm{d}w\leq\left(\frac{1}{\phi_n^0}+j\cdot\frac{2r_n^{\alpha_1}}{r_n}\right)^{\beta-2}\cdot j\cdot\frac{2r_n^{\alpha_1}}{r_n}\leq\left(\frac{1}{v_n^{\alpha_3}}\right)^{\beta-2}\cdot j\cdot\frac{2r_n^{\alpha_1}}{r_n}\]
by \eqref{eq:remember}, we obtain
\[\mathbb{P}(E_n)\leq\left(c'\cdot r_n^{\alpha_1}\cdot\left(\frac{1}{v_n^{\alpha_3}}\right)^{\beta-2}\cdot j\cdot\frac{2r_n^{\alpha_1}}{r_n}\right)^j,\]
which yields \eqref{eq:smallgapevent}, by \eqref{eq:expsmallgap}.
The proof of the lemma is complete.
\end{proof}

\section{On the cut locus}

We conclude this paper with a short discussion about the cut locus. 
The notion of cut locus was introduced by Poincaré in the context of smooth manifolds.
In our setting of rough random metric, we follow the definition of Angel, Kolesnik and Miermont \cite{angelkolesnikmiermont}: for a fixed point $x\in\mathbb{R}^2$, the weak \textbf{cut locus} $ \mathcal{C}_x$ of $\left( \mathbb{R}^2,T\right)$ with respect to $x$ is the set of points $y\in\mathbb{R}^2$ from which there exists multiple geodesics to $x$. 
Actually, by Lemma \ref{lem:nobanana}, for each $x$, almost surely $\mathcal{C}_x$ agrees with the strong cut locus, that is the set of points $y$ from which there exists multiple geodesics to $x$ which are required to be disjoint in a neighborhood of $y$. 
In \cite{angelkolesnikmiermont}, the authors study the stability of $ \mathcal{C}_x$ when the base point $x$ is moved over $ \mathbb{R}^2$, and the two notions could differ for exceptional points $x$. 
We shall here focus on more basic properties, and will henceforth suppose that $x=0$ is fixed.
We use the term cut locus for the weak or strong (indifferently, since the two are equal almost surely) cut locus $\mathcal{C}_0$ with respect to $0$.
Let us make a couple of remarks on $ \mathcal{C}_0$:
\begin{itemize}
\item  Almost surely, we have $ \mathcal{I} \cap \mathcal{C}_0 =\emptyset$. 
Indeed, if there exist two disjoint geodesics $\gamma_{1}, \gamma_{2}$ from an intersection point $x \in \mathcal{I}$ to $0$, then $\gamma_1$ and $\gamma_2$ coalesce at a point $y$  by before hitting $0$, by item 1 Proposition \ref{prop:confluence}.
See Figure \ref{fig:cutlocus-inter}. 
Since geodesics do not pause en route (Theorem \ref{thm:pause}), the confluence point $y$ must also belong to $ \mathcal{I}$, and the geodesic segments of $\gamma_{1}$ and $\gamma_{2}$ between $x$ and $y$ are what we called simple $V$-paths, with the same driving time. 
But by the multivariate Mecke formula, almost surely, it is not possible to find two distinct simple $V$-paths between two points of $\mathcal{I}$ with the same driving time.

\begin{figure}[!h]
\begin{center}
\includegraphics[width=10cm]{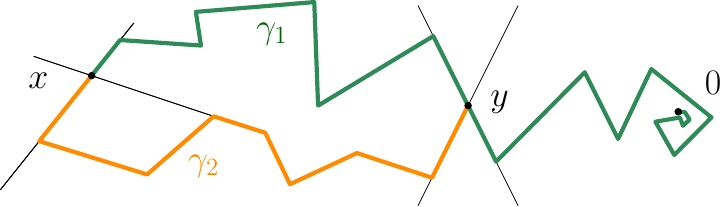}
\caption{Illustration of the proof of $\mathcal{I} \cap \mathcal{C}_0 =\emptyset$. \label{fig:cutlocus-inter}}
\end{center}
\end{figure}
 
\item Almost surely, for each road $(\ell_0,v_0) \in \Pi$, the set $ \ell_0\cap\mathcal{C}_0$ is countable. 
Indeed, let $x \in \ell$ be a point such that there exist two distinct geodesics $\gamma_1$ and $\gamma_2$ from $x$ to $0$.
First, by the previous point, we have $x\in\mathcal{L}\setminus\mathcal{I}$.
Then, by the results of the previous section, the point $x$ is not a $3^+$-star, and there must exist roads $(\ell_1,v_1)\neq(\ell_2,v_2)\in\Pi\setminus\{(\ell_0,v_0)\}$ such that $\gamma_i$ leaves $(\ell_0,v_0)$ by using the road $(\ell_i,v_i)$, for each $i$ (since $\gamma_1$ and $\gamma_2$ are disjoint locally around $x$, these two roads must be distinct).
In turn, the point $x \in \ell$ can be recovered from the two roads $(\ell_i,v_i)$: if $x_i$ denotes the intersection point of $\ell_i$ with $\ell_0$, then $x$ is the unique point $z$ in $\ell$ such that 
$$  \frac{|z-x_{1}|}{v_0} + T(x_{1},0) = \frac{|z-x_{2}|}{v_0} + T(x_{2},0).$$
Therefore, since there are countably many pairs of roads of $\Pi$, the set $\ell_0\cap\mathcal{C}_0$ must be countable.
\end{itemize}

Recall that in smooth contexts (complete, analytic Riemannian surface homeomorphic to the sphere), the cut locus is known to be a tree with finitely many branches \cite{myers1935connections}. 
In our setting of fractal random metric, we believe that the cut locus $\mathcal{C}_{0}$ is dense, path-connected and does not contain any non-trivial cycle. 
More precisely, the degree of  $ x \in \mathcal{C}_{0}$, i.e, the number of connected components of $ \mathcal{C}_{0} \setminus\{x\}$, should be equal to the maximal number of distinct geodesics from $x$ to $0$, and furthermore points with degree at least $3$ should form a countable and dense subset.
This last assertion, and the fact that the cut locus contains a non-trivial connected set, have been proved for the \textsc{Lqg} random metric by Gwynne \cite{gwynnenetworks}. {The kind of topological arguments he used should also work out in our setting (and in fact the above assertions should hold under relatively weak assumptions on the planar random metric), and to convince the reader of this fact we give the following example: the argument of Gwynne \cite[Lemma 3.2]{gwynnenetworks} shows that

\begin{prop}
Almost surely, the set of points $x\in\left.\mathbb{R}^2\middle\backslash\{0\}\right.$ such that there exists (at least) three distinct geodesics from $x$ to $0$ is countable.
\end{prop}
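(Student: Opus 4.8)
The plan is to adapt Gwynne's topological argument from \cite[Lemma 3.2]{gwynnenetworks} to our setting. The key observation is that the cut locus $\mathcal{C}_0$ is naturally equipped with a structure coming from the geodesic network to $0$, and that a point $x$ from which there exist (at least) three distinct geodesics to $0$ is a local branch point of this structure. One would first recall that by Lemma \ref{lem:nobanana}, if $\gamma_1,\gamma_2,\gamma_3$ are three distinct geodesics from $x$ to $0$, then after the last common confluence point they are pairwise disjoint, so the union $\gamma_1\cup\gamma_2\cup\gamma_3$ near $x$ looks like three arcs emanating from $x$ and meeting only at $x$. The point is then to show that such $x$ cannot accumulate: around each such $x$ one can find a small ``witnessing'' neighbourhood such that no other triple point lies in it, and then a separability argument gives countability.

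The precise mechanism I would use is the following. Fix a countable dense set $D\subset\mathbb{R}^2$ and let $x$ be a triple point. By the confluence of geodesics around the typical point $0$ (item \ref{item:confluencetyp} of Proposition \ref{prop:confluence}), the three geodesics from $x$ to $0$ all eventually pass through a single cut point $z$ close to $0$; but before reaching $z$ they are distinct, so the three initial segments from $x$ must separate the plane (or at least a neighbourhood of $x$) into (at least) three regions. More concretely, following Gwynne, one encodes each triple point $x$ by a triple of points $(q_1,q_2,q_3)\in D^3$ lying in the three distinct ``sectors'' determined locally by the three geodesic arms, together with the information that the geodesics from $q_1,q_2,q_3$ to $0$ pass on different sides of $x$. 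The map $x\mapsto(q_1,q_2,q_3)$ (made precise via nested rational balls around $x$) is then shown to be injective: if $x\neq x'$ were two triple points encoded by the same triple, one produces two geodesics from some $q_i$ to $0$ that would have to cross each other without coinciding, contradicting the fact (a consequence of Lemma \ref{lem:nobanana}) that two geodesics to $0$ which meet must coincide from that meeting point onward. Since $D^3$ is countable, countability of the set of triple points follows.

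The steps, in order: (1) recall that geodesics to $0$ satisfy the ``no banana'' property of Lemma \ref{lem:nobanana}, so two geodesics to $0$ either are disjoint or coincide on a terminal segment; (2) given a triple point $x$, isolate three pairwise-disjoint initial geodesic arms and argue, using that $\left(\mathbb{R}^2,T\right)$ is homeomorphic to $\mathbb{R}^2$ together with the fact that geodesics are Lipschitz (hence Jordan) arcs, that these three arms locally disconnect a small topological disk around $x$ into three components; (3) pick rational points $q_1,q_2,q_3\in D$ one in (the image under the homeomorphism of) each component, and a rational radius, recording also which arm each $q_i$ sits against; (4) show the resulting encoding is injective by a crossing argument as above; (5) conclude by countability of the encoding set. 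The same argument shows (though we only claim the stated proposition) that the set of such points is a measurable, indeed countable, set.

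The main obstacle I anticipate is step (2): making rigorous the claim that the three disjoint geodesic arms emanating from $x$ genuinely \emph{separate} a neighbourhood of $x$, i.e.\ that a triple point of the geodesic network to $0$ really has local degree $\geq 3$ in the topological sense needed for the sector/encoding argument. In Gwynne's \textsc{Lqg} setting this uses the planarity of the metric together with the structure of geodesic networks; here we have the advantage that $V$-paths are Euclidean-Lipschitz Jordan arcs and that $\left(\mathbb{R}^2,T\right)$ is homeomorphic to the Euclidean plane, so the Jordan curve theorem applies directly — but one must still check that two of the three arms together with a piece of a small circle around $x$ bound a region containing a third-arm point in its interior, which requires ruling out that two of the arms leave $x$ ``in the same direction'' and then diverge only further out. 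This is handled by choosing the witnessing radius small enough and invoking that distinct geodesics to $0$ are disjoint once they have separated (Lemma \ref{lem:nobanana}), so that on a sufficiently small scale the three arms are genuinely in three distinct sectors. The remaining steps are routine bookkeeping with the countable dense set $D$ and the injectivity crossing argument.
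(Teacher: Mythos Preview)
Your sketch gestures at the right family of ideas but the mechanism you describe does not match Gwynne's argument and, as written, does not close. The paper's proof runs through an ingredient you never invoke: Proposition~\ref{prop:K+stars}, which gives, for each point $y$ and scale $\delta$, a \emph{finite cut set} $\mathcal{Z}_y\subset\overline{B}(y,\delta/2)$ of size at most $m$ through which every geodesic crossing the annulus must pass. One covers a fixed annulus by finitely many such balls and encodes each triple point $x\in B(y,\eta_y)$ by the triple $(z_1,z_2,z_3)\in\mathcal{Z}_y^3$ of cut points hit by its three arms. Gwynne's topological step (Lemma~\ref{lem:nobanana} plus planarity) then shows that for a \emph{fixed} $(z_1,z_2,z_3)$ there is at most one such $x$: a second point $x'$ would produce three geodesics $\gamma_i'$ each merging with the corresponding arm $\gamma_i$ of $x$ at some $z_i'\neq x$ strictly between $x$ and $z_i$, and since $x'$ sits in a single sector of the tripod at $x$, at least one $\gamma_i'$ would have to cross another arm $\gamma_j$ before reaching $z_i'$ --- forcing a premature merge by Lemma~\ref{lem:nobanana} and a contradiction. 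This yields local \emph{finiteness} on each annulus, hence global countability.

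Your encoding instead places the $q_i$ in the three open \emph{sectors} at $x$, and your injectivity step claims a contradiction via ``two geodesics from some $q_i$ to $0$''. But each $q_i\in D$ is typical, hence has a \emph{unique} geodesic to $0$; the contradiction you describe simply does not arise. More fundamentally, the sectors --- and therefore the meaning of the encoding --- depend on $x$, so nothing in your scheme prevents two distinct triple points $x,x'$ from being assigned the same $(q_1,q_2,q_3)$ with respect to \emph{their own} sector decompositions. What makes the cut-set encoding injective is precisely that the $z_i$ lie \emph{on} the arms, so that arms from a hypothetical second triple point are forced to merge with those of the first and the planar obstruction engages; rational points chosen in the open complementary regions carry no such constraint. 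You have also misidentified the main obstacle: step~(2), the local separation into three sectors, is routine once the arms are disjoint Jordan arcs near $x$; the real difficulty is step~(4), and that is where your argument breaks.
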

\begin{proof}
Fix $0<r<R$, and let $A=\left.\overline{B}(0,R)\middle\backslash B(0,r)\right.$ be the annulus between radii $r$ and $R$ around $0$.
Fix $\delta\in{]0,r[}$, and let us show that almost surely, the set $\mathcal{X}_\delta$ of points $x\in A$ for which there exists three geodesics $\gamma_i:[0,T(x,0)]\rightarrow\mathbb{R}^2$ from $x$ to $0$ such that 
\[\gamma_i{]0,T(x,0)]}\cap\gamma_j{]0,T(x,0)]}\cap\overline{B}(x,\delta)=\emptyset\quad\text{for all $i\neq j$}\]
is finite.
By Proposition \ref{prop:K+stars}, there exists $m\in\mathbb{N}^*$ such that almost surely, the following holds: for every $y\in A$, there exists $\eta_y\in{]0,\delta/2[}$ and a cut set ${\mathcal{Z}_y\subset\left.\overline{B}(y,\delta/2)\middle\backslash\overline{B}(y,\eta)\right.}$ with cardinality $\#\mathcal{Z}_y\leq m$, such that any geodesic $V$-path from a point inside $\overline{B}(y,\eta_y)$ to a point outside $\overline{B}(y,\delta/2)$ must pass through $z$ for some $z\in\mathcal{Z}_y$.
Then, since the open balls $(B(y,\eta_y)\,;\,y\in A)$ cover $A$, by compactness, there exists a finite subset $Y\subset A$ such that the open balls $(B(y,\eta_y)\,;\,y\in Y)$ cover $A$.
Now, we claim that almost surely, we have
\[\#\mathcal{X}_\delta\leq\#Y\cdot\binom{m}{3}<\infty.\]
This follows from the argument of Gwynne \cite[Lemma 3.2]{gwynnenetworks}, which shows that for each $y\in Y$, we have
\begin{equation}\label{eq:gwynne}
\#\mathcal{X}_\delta\cap\overline{B}(y,\eta_y)\leq\binom{m}{3}.
\end{equation}
Indeed, for each triple of distinct points $z_1,z_2,z_3\in\mathcal{Z}_y$, let $\mathcal{X}_\delta^{y;z_1,z_2,z_3}$ be the set of points ${x\in\overline{B}(y,\eta_y)}$ for which there exists three geodesics $\gamma_i:[0,T(x,0)]\rightarrow\mathbb{R}^2$ from $x$ to $0$ with
\[\gamma_i{]0,T(x,0)]}\cap\gamma_j{]0,T(x,0)]}\cap\overline{B}(x,\delta)=\emptyset\quad\text{for all $i\neq j$},\]
such that $\gamma_i$ passes through $z_i$ for each $i$.
First, since $\overline{B}(y,\delta/2)\subset\overline{B}(x,\delta)$ for all $x\in\overline{B}(y,\eta_y)$, we must have
\[\mathcal{X}_\delta\cap\overline{B}(y,\eta_y)\subset\bigcup_{\text{$z_1,z_2,z_3\in\mathcal{Z}_y$ distinct}}\mathcal{X}_\delta^{y;z_1,z_2,z_3}.\]
Then, for each triple of distinct points $z_1,z_2,z_3\in\mathcal{Z}_y$, the argument of Gwynne \cite[Lemma 3.2]{gwynnenetworks} shows that $\#\mathcal{X}_\delta^{y;z_1,z_2,z_3}\leq1$.
See Figure \ref{fig:gwynne} for a visual proof, and \cite[Lemma 3.2]{gwynnenetworks} for a detailed proof in the framework of \textsc{Lqg} random metrics.
Thus, we obtain \eqref{eq:gwynne}, which completes the proof.

\begin{figure}[ht]
\centering
\includegraphics[width=0.7\linewidth]{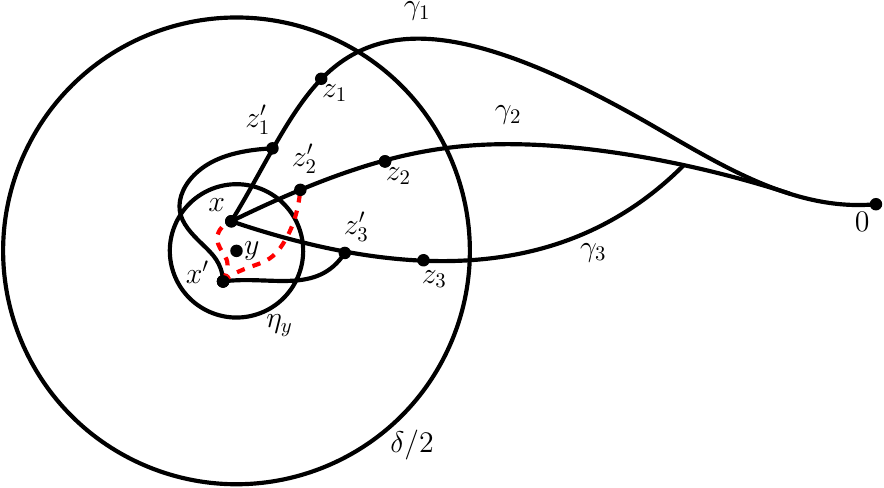}
\caption{Illustration of the argument of Gwynne \cite[Lemma 3.2]{gwynnenetworks}.
Fix $x\in\mathcal{X}_\delta^{y;z_1,z_2,z_3}$, and draw the corresponding three geodesics $\gamma_i$ from $x$ to $0$, where $\gamma_i$ passes through $z_i$ for each $i$.
By Lemma \ref{lem:nobanana}, their newtork must have a similar form to the one illustrated here.
Then, try to draw three geodesics $\gamma'_i$ from a point $x'\in\left.\overline{B}(y,\eta_y)\middle\backslash\{x\}\right.$ to $0$ such that $\gamma'_i$ passes through $z_i$ for each $i$.
Let $z_i'$ be the point at which $\gamma'_i$ meets $\gamma_i$, which is necessarily between $x$ and $z_i$, and observe that we cannove have $z_i'=x$, again by Lemma \ref{lem:nobanana}.
The existence of such a point $x'$ is simply not possible, and we conclude that $\#\mathcal{X}_\delta^{y;z_1,z_2,z_3}\leq1$.}\label{fig:gwynne}
\end{figure}
\end{proof}}

Back to our discussion, notice that any point of $\mathcal{C}_0$ with degree at least $3$ is in particular a $3^+$-star, and thus cannot belong to $ \mathcal{L}\setminus\mathcal{I}$ by Theorem \ref{thm:frame}.
Also, such a point cannot belong to $\mathcal{I}$ since $\mathcal{I}\cap\mathcal{C}_0=\emptyset$, and so the existence of such points furnishes examples of a $3^+$-star which is not  {on the geodesic frame (a canonical example of a $3^+$-star is given by a confluence point of two distinct geodesics to a typical point, hence lies on the geodesic frame)}.
Furthermore, provided that $\mathcal{C}_{0}$ is path-connected and non-trivial, it must be uncountable, and so most of $ \mathcal{C}_{0}$ lies outside $\mathcal{L}$, since we saw that almost surely $\mathcal{L}\cap\mathcal{C}_{0}$ is countable. 
We also believe that the ``branches'' of $ \mathcal{C}_{0}$ are of fractal nature: {almost surely, for every $x \neq y \in \mathcal{C}_{0}$, and for any path $\gamma:[0,1]\rightarrow\mathcal{C}_0$ from $x$ to $y$, the Hausdorff dimension of $\gamma[0,1]$ should be strictly greater than $1$ with respect to the Euclidean metric.}
See Open problem \ref{op:} at the end of the Introduction.

\section{Appendix}\label{sec:appendix}

In this section, we present the folklore results that we have deferred to the appendix.
The first one is the following mixing property of $\Pi$.

\begin{lem}\label{lem:mixingzoom}
Let $\mathbb{M}$ be the space of counting measures on $\mathbb{L}\times\mathbb{R}_+^*$, endowed as usual with the $\sigma$-algebra generated by the maps $\left(\pi\in\mathbb{M}\mapsto\pi(B),\,\text{$B$ Borel subset of $\mathbb{L}\times\mathbb{R}_+^*$}\right)$.
For any bounded measurable functions $\varphi$ and $\psi$ from $\mathbb{M}$ to $\mathbb{R}$, we have
\[\mathbb{E}\left[\varphi(\Pi)\cdot\psi\left(f_k^*\Pi\right)\right]\longrightarrow\mathbb{E}[\varphi(\Pi)]\cdot\mathbb{E}[\psi(\Pi)]\quad\text{as $k\to\infty$.}\]
\end{lem}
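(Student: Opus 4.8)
The plan is to reduce to the case where $\varphi$ and $\psi$ are \emph{local}, that is, measurable with respect to the restriction of $\Pi$ to a set of finite $\nu$-measure, and then to exploit the fact that a Poisson process restricted to disjoint Borel sets yields independent Poisson processes. For the reduction I would take $A_m=\langle\overline{B}(0,m)\rangle\times[1/m,\infty)$, which has $\nu(A_m)<\infty$ by \eqref{eq:normgeo} and increases to $\mathbb{L}\times\mathbb{R}_+^*$, so that the $\sigma$-algebras generated by $\Pi|_{A_m}$ increase to the full $\sigma$-algebra on $\mathbb{M}$. Then $\varphi_m:=\mathbb{E}[\varphi(\Pi)\mid\sigma(\Pi|_{A_m})]$ obeys $\|\varphi_m\|_\infty\le\|\varphi\|_\infty$ and $\mathbb{E}|\varphi(\Pi)-\varphi_m(\Pi)|\to 0$ by martingale convergence, and similarly for $\psi$. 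Since $f_k^*\Pi$ has the same law as $\Pi$ by \eqref{eq:selfsimilarPi}, replacing $(\varphi,\psi)$ by $(\varphi_m,\psi_m)$ changes both $\mathbb{E}[\varphi(\Pi)\,\psi(f_k^*\Pi)]$ and $\mathbb{E}[\varphi(\Pi)]\,\mathbb{E}[\psi(\Pi)]$ by an amount tending to $0$ as $m\to\infty$, uniformly in $k$. It therefore suffices to prove the convergence when $\varphi$ and $\psi$ are both measurable with respect to $\Pi|_A$ for a single $A$ with $\nu(A)<\infty$, which I assume from now on.

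The key quantitative input, and the only place where the specific structure of the model really enters, is the geometric estimate
\[\nu\bigl(A\cap f_k^{-1}(A)\bigr)\;\longrightarrow\;0\qquad\text{as }k\to\infty.\]
The maps $f_k$ are similarities of the form \eqref{eq:defscaling}, so in particular $\nu$ is invariant under $f_k$, again by \eqref{eq:selfsimilarPi}, and their scale factors $\rho_k$ tend to $\infty$ (the regime $\rho_k\to0$ is equivalent via $f_{x,r}^{-1}=f_{-x/r,\,1/r}$, so only one case needs to be treated). Enlarging $A$ if necessary one may assume $A=\langle\overline{B}(0,R)\rangle\times[v_0,\infty)$, and then $f_k^{-1}(A)=\langle\overline{B}(c_k,R/\rho_k)\rangle\times[v_0\rho_k^{-\kappa},\infty)$ with $\kappa=(d-1)/(\beta-1)$ and $c_k\to0$; since $v_0\rho_k^{-\kappa}\le v_0$, the speed constraint on $A\cap f_k^{-1}(A)$ is just $v\ge v_0$, and \eqref{eq:normmugeo} gives $\nu(A\cap f_k^{-1}(A))\le c\,\mu\langle\overline{B}(c_k,R/\rho_k)\rangle\int_{v_0}^\infty v^{-\beta}\,\mathrm{d}v\le C\,\rho_k^{-(d-1)}\to0$, because the tiny ball $\overline{B}(c_k,R/\rho_k)$ meets lines of $\mu$-measure of order $\rho_k^{-(d-1)}$. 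Carrying out this computation cleanly — choosing the right comparison sets and tracking the interplay of the spatial and speed scalings — is the main obstacle, though it is ultimately short.

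With $A$, $\varphi$, $\psi$ as above, decompose $\Pi=\Pi_k^{(1)}+\Pi_k^{(2)}+\Pi_k^{(3)}+\Pi_k^{(4)}$, the restrictions of $\Pi$ to $A\setminus f_k^{-1}(A)$, $A\cap f_k^{-1}(A)$, $f_k^{-1}(A)\setminus A$, and the complement of $A\cup f_k^{-1}(A)$; these are independent Poisson processes. Then $\varphi(\Pi)$ is a function of $\Pi_k^{(1)}+\Pi_k^{(2)}$, while $\psi(f_k^*\Pi)$ is a function of $\Pi_k^{(2)}+\Pi_k^{(3)}$ since $(f_k^*\Pi)|_A=f_k^*(\Pi|_{f_k^{-1}(A)})$. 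Because $\mathbb{P}(\Pi_k^{(2)}\ne0)\le\nu(A\cap f_k^{-1}(A))\to0$, deleting $\Pi_k^{(2)}$ inside each factor costs $o(1)$ in $L^1$; after this deletion the two factors are independent, so the expectation of the product equals the product of the expectations. Finally $\mathbb{E}[\varphi(\Pi_k^{(1)})]\to\mathbb{E}[\varphi(\Pi|_A)]=\mathbb{E}[\varphi(\Pi)]$ and $\mathbb{E}[\psi(f_k^*\Pi_k^{(3)})]\to\mathbb{E}[\psi(\Pi)]$, using once more that $\nu(A\cap f_k^{-1}(A))\to0$: indeed $\nu$-invariance gives $\nu(A\cap f_k(A))=\nu(A\cap f_k^{-1}(A))$, so $f_k^*\Pi_k^{(3)}$ is a Poisson process whose intensity differs from $\nu|_A$ by a nonnegative measure of total mass $\nu(A\cap f_k^{-1}(A))$, whence the two laws are within that distance in total variation. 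Combining the three steps yields $\mathbb{E}[\varphi(\Pi)\,\psi(f_k^*\Pi)]\to\mathbb{E}[\varphi(\Pi)]\,\mathbb{E}[\psi(\Pi)]$. Incidentally, this is just the specialisation to our setting of the standard criterion that a Poisson process is mixing for a flow preserving its $\sigma$-finite diffuse intensity exactly when that flow is of zero type, i.e.\ when $\nu(A\cap f_k^{-1}(A))\to0$ for every $A$ of finite measure.
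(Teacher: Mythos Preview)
Your proof is correct and takes essentially the same approach as the paper: both reduce to local observables (you via martingale convergence on the filtration $\sigma(\Pi|_{A_m})$, the paper via a Kallenberg-style $L^1$ approximation) and then exploit that $\Pi|_A$ and $(f_k^*\Pi)|_A$ become asymptotically independent because $\nu(A\cap f_k^{-1}(A))\to 0$. The paper phrases the independence step via a high-probability event $G_k$ on which the two restrictions depend on disjoint pieces of $\Pi$, whereas you decompose $\Pi$ into four independent parts and bound the overlap explicitly; your closing remark identifying this as the zero-type mixing criterion for Poisson suspensions is a nice conceptual framing that the paper does not make explicit.
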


\begin{proof}[Proof of Lemma \ref{lem:mixingzoom}]
For each ${n\in\mathbb{N}^*}$, let $\rho_n:\mathbb{M}\rightarrow\mathbb{M}$ be the restriction map to the set of roads with speed at least $1/n$ that pass through $\overline{B}(0,n)$, and let ${\mathcal{A}_n=\sigma(\rho_n(\Pi))}$ be the $\sigma$-algebra generated by the restriction $\rho_n(\Pi)$ of $\Pi$.
Fix bounded measurable functions $\varphi,\psi:\mathbb{M}\rightarrow\mathbb{R}$, and fix $\varepsilon>0$.
First, we use the following classical measure-theoretic approximation argument \cite[Lemma 3.16]{kallenberg}: since the $\sigma$-algebra generated by $\Pi$ agrees with the $\sigma$-algebra generated by $\bigcup_{n\geq1}\mathcal{A}_n$, there exists $n\in\mathbb{N}^*$ and bounded measurable functions $\varphi_n,\psi_n:\mathbb{M}\rightarrow\mathbb{R}$ such that
\begin{equation}\label{eqproofmixinggeo1}
\mathbb{E}[|\varphi(\Pi)-\varphi_n(\rho_n(\Pi))|]\leq\varepsilon\quad\text{and}\quad\mathbb{E}[|\psi(\Pi)-\psi_n(\rho_n(\Pi))|]\leq\varepsilon.
\end{equation}
Now, let $C$ be a constant that dominates $\varphi$, $\psi$, $\varphi_n$ and $\psi_n$.
On the one hand, for every $k\in\mathbb{N}$, we have
\begin{equation}\label{eqproofmixinggeo2}
|\mathbb{E}[\varphi(\Pi)\cdot\psi(f_k^*\Pi)]-\mathbb{E}[\varphi_n(\rho_n(\Pi))\cdot\psi_n(\rho_n(f_k^*\Pi))]|\leq2C\varepsilon.
\end{equation}
On the other hand, the random measures $\rho_n(\Pi)$ and $\rho_n\left(f_k^*\Pi\right)$ are ``asymptotically independent'', as $k\to\infty$.
Indeed, let $G_k$ be the event: ``no road of $\Pi$ with speed at least $1/n$ passes through $\overline{B}(0,n\cdot r_k)$''.
The following holds:
\begin{itemize}
\item we have $\mathbb{P}(G_k)\rightarrow1$ as $k\to\infty$,
\item on $G_k$, the random measure $\rho_n(\Pi)$ agrees with the restriction $\Pi_k^1$ of $\Pi$ to the set of roads with speed at least $1/n$ that pass through $\overline{B}(0,n)$ but do not pass through $\overline{B}(0,n\cdot r_k)$, and $\rho_n\left(f_k^*\Pi\right)$ agrees with the restriction $\Pi_k^2$ of $\Pi$ to the set of roads with speed at least $1/n\cdot r_k^{(d-1)/(\beta-1)}$ that pass through $\overline{B}(0,n\cdot r_k)$,
\item the random measures $\Pi_k^1$ and $\Pi_k^2$ are independent.
\end{itemize}
With these three ingredients, standard manipulations show that
\[\mathbb{E}[\varphi_n(\rho_n(\Pi))\cdot\psi_n(\rho_n(f_k^*\Pi))]\underset{k\to\infty}{\longrightarrow}\mathbb{E}[\varphi_n(\rho_n(\Pi))]\cdot\mathbb{E}[\psi_n(\rho_n(\Pi))].\]
Then, we conclude with \eqref{eqproofmixinggeo1} and \eqref{eqproofmixinggeo2}: as $k\to\infty$, we have
\[\mathbb{E}[\varphi(\Pi)\cdot\psi(f_k^*\Pi)]\longrightarrow\mathbb{E}[\varphi(\Pi)]\cdot\mathbb{E}[\psi(\Pi)].\]
\end{proof}

The second one is the proof of Lemma \ref{lem:confluencetool1}, which is essentially a mix between \cite[Theorem 3.1]{kahn} of the third author and \cite[Proposition 1.3]{moa1fractal} of the first author.

\begin{proof}[Proof of Lemma \ref{lem:confluencetool1}]
Beofre diving into the proof, let us recall the following estimate on the invariant measure $\mu$ on the space of lines.
For two (or more) compact subsets $K_1,K_2\subset\mathbb{R}^d$, we use $\langle K_1\,;\,K_2 \rangle$ as shorthand for $\langle K_1 \rangle\cap\langle K_2\rangle$.
In complement to \eqref{eq:normmugeo}, we have the following estimate: there exists constants $c$ and $C$ such that for every $x\neq y\in\mathbb{R}^d$, and for every $0<r,s\leq|x-y|$,
\begin{equation}\label{eq:twopointmugeo}
c\cdot\frac{r^{d-1}\cdot s^{d-1}}{|x-y|^{d-1}}\leq\mu\left\langle\overline{B}(x,r)\,;\,\overline{B}(y,s)\right\rangle\leq C\cdot\frac{r^{d-1}\cdot s^{d-1}}{|x-y|^{d-1}}.
\end{equation}

Now, let us dive into the proof.
Fix a parameter $\rho\in{]0,r[}$ to be adjusted, let $\Pi_\rho$ be the restriction of $\Pi$ to the set of roads with speed less than $1$ that do not pass through $\overline{B}(z,\rho)$, and let $V_\rho$ be ``speed limits'' function induced by $\Pi_\rho$.
For $x,y\in\mathbb{R}^d$ and $s>0$, we denote by $V_\rho(x,y,s)$ the speed of the fastest road of $\Pi_\rho$ that passes through both balls $\overline{B}(x,s)$ and $\overline{B}(y,s)$.
Let us recall Kendall's construction of $V$-paths from \cite[Theorem 3.1]{kendall}, and use it to construct a $V_\rho$-path between every pair of points $(x,y)$ with $x,y\in A$.
Fix a parameter $\alpha\in{]0,1[}$ to be adjusted, and fix $x,y\in A$.
To construct a $V_\rho$-path from $x$ to $y$, we proceed as follows.
Assuming that $x\neq y$ (for otherwise there is nothing to do), consider the fastest road $(\ell,v)$ of $\Pi_\rho$ that passes through both balls $\overline{B}(x,\alpha|x-y|)$ and ${\overline{B}(y,\alpha|x-y|)}$\footnote{In particular, we will adjust $\alpha$ and $\rho$ to ensure that such a road exists.}, and let $x'$ (resp. $y'$) be the orthogonal projection of $x$ (resp. $y$) on $\ell$.
The road $(\ell,v)$ allows to drive from $x'$ to $y'$ in time $|x'-y'|/v$, and it remains to find routes from $x$ to $x'$ and from $y'$ to $y$.
For this purpose, we recursively apply the procedure to the pairs of points $(x,x')$ and $(y',y)$.
To formalise this, let us encode the construction in the nodes $\varnothing,1,2,\ldots$ of the infinite binary tree $\mathbb{B}$ as follows.
Initially, we set $(x_\varnothing,y_\varnothing)=(x,y)$.
Then, by induction, for every ${n\in\mathbb{N}}$ such that $((x_u,y_u)\,;\,u\in\mathbb{B}:|u|\leq n)$ have been constructed, we proceed as follows.
For each node $u\in\mathbb{B}$ at height $n$, there is the following alternative.
\begin{itemize}
\item If $x_u=y_u$, then ``there is nothing to do''.
We let $x_u'=x_u$ and $y_u'=y_u$, and we let $T_u=0$ be the driving time of the trivial $V$-path from $x'_u$ to $y'_u$.
Then, we define the pairs of points associated with the children $u1$ and $u2$ of $u$ in $\mathbb{B}$ as $(x_{u1},y_{u1})=(x_u,x_u')$ and $(x_{u2},y_{u2})=(y_u',y_u)$.
\item Otherwise, consider the fastest road $(\ell_u,v_u)$ of $\Pi_\rho$ that passes through both balls 
\[\overline{B}(x_u,\alpha|x_u-y_u|)\quad\text{and}\quad\overline{B}(y_u,\alpha|x_u-y_u|).\]
Note that by definition, we have $v_u=V_\rho(x_u,y_u,\alpha|x_u-y_u|)$.
We let $x_u'$ (resp. $y_u'$) be the orthogonal projection of $x_u$ (resp. $y_u$) on $\ell_u$, and we let $T_u=|x_u'-y_u'|/v_u$ be the driving time of the $V$-path that drives from $x_u'$ to $y_u'$ using the road $(\ell_u,v_u)$.
Then, we define the pairs of points associated with the children $u1$ and $u2$ of $u$ in $\mathbb{B}$ as $(x_{u1},y_{u1})=(x_u,x_u')$ and ${(x_{u2},y_{u2})=(y_u',y_u)}$.
\end{itemize}
Taking for granted that $\sum_{u\in\mathbb{B}}T_u<\infty$, it is a neither immediate nor difficult deterministic consequence of the construction that all the $V$-paths from $x_u'$ to $y_u'$ for $u\in\mathbb{B}$ yield a $V$-path $\gamma:[0,T]\rightarrow\mathbb{R}^d$ from $x$ to $y$, with driving time $T=\sum_{u\in\mathbb{B}}T_u$.
The rest of the proof consists in showing that $\alpha$ and $\rho$ can be adjusted in order to obtain the desired bound on the driving times $\sum_{u\in\mathbb{B}}T_u$ provided by this construction.
Note that for every $u\in\mathbb{B}$ such that $x_u\neq y_u$, we have
\[T_u=\frac{|x_u'-y_u'|}{v_u}\leq\frac{|x_u-y_u|}{V_\rho(x_u,y_u,\alpha|x_u-y_u|)}.\]
Moreover, note that we have $|x_u-y_u|\leq\alpha^{|u|}\cdot|x-y|$ for all $u\in\mathbb{B}$, and
\[x_u,y_u\in\overline{B}\left(x,\frac{\alpha}{1-\alpha}\right)\cup\overline{B}\left(y,\frac{\alpha}{1-\alpha}\right)\subset(A)_{\alpha/(1-\alpha)}\quad\text{for all $u\in\mathbb{B}$,}\]
where $(A)_\varepsilon=\left\{x\in\mathbb{R}^d:d(x,A)\leq\varepsilon\right\}$ denotes the $\varepsilon$-neighbourhood of $A$.
Now, consider the following lemma, which provides a uniform lower bound on the speeds 
\[\left(V_\rho(x,y,\alpha|x-y|)\,;\,x\neq y\in(A)_{\alpha/(1-\alpha)}\right).\]

\begin{lem}\label{lem:unifcontrolVgeo}
For all sufficiently small $\alpha$, there exists $\rho=\rho(\alpha)\in{]0,r[}$ and a positive random variable $\zeta$, measurable with respect to the restriction of $\Pi_\rho$ to the set of roads that pass through $\overline{B}(0,2)$, such that
\[V_\rho(x,y,\alpha|x-y|)\geq\zeta\cdot|x-y|^{(d-1)/(\beta-1)}\cdot\ln\left(\frac{4}{\alpha|x-y|}\right)^{-1/(\beta-1)}\quad\text{for all $x\neq y\in(A)_{\alpha/(1-\alpha)}$.}\]
Moreover, there exists constants $C=C(\alpha)$ and $c=(\alpha)$ such that
\[\mathbb{P}\left(\zeta<\frac{1}{t}\right)\leq C\cdot\exp\left[-c\cdot t^{\beta-1}\right]\quad\text{for all $t\in\mathbb{R}_+^*$.}\]
\end{lem}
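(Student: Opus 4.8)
The plan is to run the classical first-moment/chaining argument over a multiscale net of pairs of points in $(A)_{\alpha/(1-\alpha)}$, the only genuinely new ingredient being a \emph{uniform} way of discarding the forbidden ball $\overline{B}(z,\rho)$. First I fix $\alpha$ small enough that $(A)_{\alpha/(1-\alpha)}\subset\overline{B}(0,3/2)$ and that $(A)_{\alpha/(1-\alpha)}$ stays at Euclidean distance at least $r/2$ from $z$; note that then every ball $\overline{B}(w,\alpha|x-y|)$ with $w\in\{x,y\}$ and $x,y\in(A)_{\alpha/(1-\alpha)}$ has radius at most $3\alpha\le1/2$ and centre in $\overline{B}(0,3/2)$, hence lies in $\overline{B}(0,2)$, so $V_\rho(x,y,\alpha|x-y|)$ depends only on roads of $\Pi_\rho$ meeting $\overline{B}(0,2)$ — this takes care of the measurability claim. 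Next I choose $\rho=\kappa\alpha$ with $\kappa\in(0,1)$ a small constant depending only on $d$, $r$ and the constants in \eqref{eq:twopointmugeo}: for any two balls $B_1=\overline{B}(p,s)$, $B_2=\overline{B}(q,s)$ with $p,q\in(A)_{\alpha/(1-\alpha)}$, $s\le|p-q|$ and $s/|p-q|\le2\alpha/3$, the crude bound $\mu\langle B_1;\overline{B}(z,\rho)\rangle\le C\,s^{d-1}\rho^{d-1}/|p-z|^{d-1}\le C_r\,s^{d-1}\rho^{d-1}$ together with $\mu\langle B_1;B_2\rangle\ge c\,s^{2(d-1)}/|p-q|^{d-1}$ (both from \eqref{eq:twopointmugeo}) gives $\mu\big(\langle B_1;B_2\rangle\setminus\langle\overline{B}(z,\rho)\rangle\big)\ge\tfrac12\mu\langle B_1;B_2\rangle$ as soon as $\rho^{d-1}\le\tfrac{c}{2C_r}(s/|p-q|)^{d-1}$; since $s/|p-q|$ will be comparable to $\alpha$ at \emph{every} dyadic scale, the single choice $\rho\asymp\alpha$ works uniformly, and we further shrink it so that $\rho<r$.

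The single-pair estimate is then immediate: for fixed $x\ne y$ in the domain, with $s=\alpha|x-y|$, the number of roads of $\Pi_\rho$ with speed in $[v_0,1)$ passing through $\overline{B}(x,s)$ and $\overline{B}(y,s)$ is Poisson with mean $c\,\mu\big(\langle\overline{B}(x,s);\overline{B}(y,s)\rangle\setminus\langle\overline{B}(z,\rho)\rangle\big)\int_{v_0}^1 v^{-\beta}\,\mathrm{d}v\ge c'\,\alpha^{2(d-1)}|x-y|^{d-1}v_0^{-(\beta-1)}$ for $v_0\le1/2$ (using $\int_{v_0}^1 v^{-\beta}\mathrm{d}v\ge\tfrac1{2(\beta-1)}v_0^{-(\beta-1)}$), whence $\mathbb{P}\big(V_\rho(x,y,\alpha|x-y|)<v_0\big)\le\exp[-c'\alpha^{2(d-1)}|x-y|^{d-1}v_0^{-(\beta-1)}]$. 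Substituting $v_0=\theta\,|x-y|^{(d-1)/(\beta-1)}\ln(4/(\alpha|x-y|))^{-1/(\beta-1)}$ turns the right-hand side into $(\alpha|x-y|/4)^{\,c'\theta^{-(\beta-1)}\alpha^{2(d-1)}}$, a power of $\alpha|x-y|$ whose exponent tends to $+\infty$ as $\theta\downarrow0$. This is exactly the role of the logarithmic correction in the statement: it converts the stretched-exponential tail in $v_0$ into a polynomial gain in the scale, which is what lets a union bound over a net of pairs be absorbed.

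Finally the chaining. For each integer $k\ge-2$ (so that $2^{-k}$ covers all scales up to the diameter $<4$ of the domain) fix an $(\alpha 2^{-k}/4)$-net $N_k$ of $(A)_{\alpha/(1-\alpha)}$ with $\#N_k\le C_\alpha 2^{kd}$; if $x\ne y$ lie in the domain with $|x-y|\in[2^{-k-1},2^{-k}]$, pick $p,q\in N_k$ with $|x-p|,|y-q|\le\alpha 2^{-k}/4$, so that $\overline{B}(p,\alpha 2^{-k}/4)\subset\overline{B}(x,\alpha|x-y|)$ and likewise at $y$, hence any road of $\Pi_\rho$ through $\overline{B}(p,\alpha 2^{-k}/4)$ and $\overline{B}(q,\alpha 2^{-k}/4)$ witnesses a lower bound on $V_\rho(x,y,\alpha|x-y|)$; moreover $|p-q|\asymp 2^{-k}$ and $s/|p-q|\asymp\alpha$, so the geometric hypothesis of the first paragraph applies. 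Applying the single-pair estimate to the $\le C_\alpha^2 2^{kd}$ relevant net pairs at each scale and union-bounding, one obtains, with $v_{k,t}$ a suitable constant multiple of $\tfrac1t\,2^{-k(d-1)/(\beta-1)}(k+1)^{-1/(\beta-1)}$, that $\mathbb{P}\big(\exists\,x\ne y\text{ at scale }k:\ V_\rho(x,y,\alpha|x-y|)<v_{k,t}\big)\le C_\alpha 2^{kd}\exp[-c_\alpha t^{\beta-1}(k+1)]$; for $t$ large this is summable over $k\ge-2$ with sum $\le C(\alpha)\exp[-c(\alpha)t^{\beta-1}]$. Defining $\zeta$ through this family of multiscale net events — equivalently, as a constant times $\inf_{x\ne y}V_\rho(x,y,\alpha|x-y|)/\big(|x-y|^{(d-1)/(\beta-1)}\ln(4/(\alpha|x-y|))^{-1/(\beta-1)}\big)$, the infimum being reduced to a countable set of pairs by the obvious semicontinuity — we get the claimed bound on $\mathbb{P}(\zeta<1/t)$ (trivially true for small $t$ after enlarging $C$), and in particular $\zeta>0$ almost surely. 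The one delicate point is precisely the uniform handling of the forbidden ball: the observation that $\rho$ may be taken proportional to $\alpha$ because the scale-invariant ratio $s/|p-q|\asymp\alpha$ keeps the proportion of lines through the two small balls that also meet $\overline{B}(z,\rho)$ bounded away from $1$ at every scale simultaneously; the rest is the routine net-and-union-bound scheme already used in \cite{kendall,moa1fractal}.
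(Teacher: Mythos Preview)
Your argument is correct and follows essentially the same route as the paper's proof: a multiscale net of pairs, the single-pair Poisson tail coming from \eqref{eq:twopointmugeo}, the logarithmic factor to absorb the union bound, and the crucial observation that once $(A)_{\alpha/(1-\alpha)}$ stays at distance $\ge r/2$ from $z$ the forbidden ball costs only a fixed fraction of the two-point measure uniformly over all scales. The only cosmetic differences are that the paper uses $\alpha$-adic scales $r_n=\alpha^n\Delta$ rather than dyadic ones, and simply says ``choose $\rho$ small enough'' where you make the quantitative point that $\rho\asymp\alpha$ suffices (in the paper's parametrisation the implicit requirement comes out as $\rho\lesssim\alpha^2 r$, for the same reason).
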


Taking this lemma for granted, we deduce that 
\[\begin{split}
\sum_{u\in\mathbb{B}}T_u&\leq\sum_{u\in\mathbb{B}}\frac{|x_u-y_u|}{\zeta\cdot|x_u-y_u|^{(d-1)/(\beta-1)}\cdot\ln(4/(\alpha|x_u-y_u|))^{-1/(\beta-1)}}\\
&=\zeta^{-1}\cdot\sum_{u\in\mathbb{B}}|x_u-y_u|^{(\beta-d)/(\beta-1)}\cdot\ln\left(\frac{4}{\alpha|x_u-y_u|}\right)^{1/(\beta-1)}=\zeta^{-1}\cdot\sum_{u\in\mathbb{B}}\phi(|x_u-y_u|),
\end{split}\]
where $\phi:s\mapsto s^{(\beta-d)/(\beta-1)}\cdot\ln(4/(\alpha s))^{1/(\beta-1)}$.
Now, recall that ${|x_u-y_u|\leq\alpha^{|u|}\cdot|x-y|\leq2}$ for all $u\in\mathbb{B}$.
Since the function ${t\mapsto t^{-(\beta-d)}\cdot\ln(t)}$ is nonincreasing over the interval $\left[e^{1/(\beta-d)},\infty\right[$, we deduce that for $\alpha$ small enough so that $4/(\alpha\cdot2)\geq e^{1/(\beta-d)}$, the function $\phi$ is nondecreasing over the interval $]0,2]$.
Then, summing over the height $n$ of $u$ in $\mathbb{B}$, we get
\[\begin{split}
\sum_{u\in\mathbb{B}}\phi(|x_u-y_u|)&\leq\sum_{u\in\mathbb{B}}\phi\left(\alpha^{|u|}\cdot|x-y|\right)\\
&=\sum_{n\geq0}2^n\cdot\phi\left(\alpha^n\cdot|x-y|\right)\\
&=\sum_{n\geq0}2^n\cdot\left(\alpha^n\cdot|x-y|\right)^{(\beta-d)/(\beta-1)}\cdot\ln\left(\frac{4}{\alpha^{n+1}\cdot|x-y|}\right)^{1/(\beta-1)}.
\end{split}\]
Next, using that $|x-y|\geq2$ we bound, for all $n\in\mathbb{N}$:
\[\begin{split}
\ln\left(\frac{4}{\alpha^{n+1}\cdot|x-y|}\right)&=(n+1)\ln\left(\frac{1}{\alpha}\right)+\ln\left(\frac{4}{|x-y|}\right)\\
&\leq(n+1)\cdot\frac{\ln(1/\alpha)}{\ln2}\cdot\ln\left(\frac{4}{|x-y|}\right)+(n+1)\ln\left(\frac{4}{|x-y|}\right)\\
&=\left(\frac{\ln(1/\alpha)}{\ln2}+1\right)\cdot(n+1)\cdot\ln\left(\frac{4}{|x-y|}\right)\\
&=:C\cdot(n+1)\cdot\ln\left(\frac{4}{|x-y|}\right).
\end{split}\]
Plugging this bound in the sum above, we obtain
\begin{eqnarray*}
\lefteqn{\sum_{u\in\mathbb{B}}\phi(|x_u-y_u|)}\\
&\leq&\sum_{n\geq0}2^n\cdot\left(\alpha^n\cdot|x-y|\right)^{(\beta-d)/(\beta-1)}\cdot\left(C\cdot(n+1)\cdot\ln\left(\frac{4}{|x-y|}\right)\right)^{1/(\beta-1)}\\
&=&C^{1/(\beta-1)}\cdot\sum_{n\geq0}\left(2\cdot\alpha^{(\beta-d)/(\beta-1)}\right)^n\cdot(n+1)^{1/(\beta-1)}\cdot|x-y|^{(\beta-d)/(\beta-1)}\cdot\ln\left(\frac{4}{|x-y|}\right)^{1/(\beta-1)}.
\end{eqnarray*}
Assuming further that $\alpha$ is small enough so that $2\cdot\alpha^{(\beta-d)/(\beta-1)}<1$, we get
\[\sum_{n\geq0}\left(2\cdot\alpha^{(\beta-d)/(\beta-1)}\right)^n\cdot(n+1)^{1/(\beta-1)}<\infty,\]
and we conclude that 
\[\sum_{u\in\mathbb{B}}\phi(|x_u-y_u|)\leq C'\cdot|x-y|^{(\beta-d)/(\beta-1)}\cdot\ln\left(\frac{4}{|x-y|}\right)^{1/(\beta-1)}\]
for some constant $C'=C'(\alpha)$.
This yields the result of the lemma, with $\Gamma=C'\cdot\zeta^{-1}$ (the tail estimate for $\Gamma$ comes from the tail estimate for $\zeta$).
To actually complete the proof, it remains to prove Lemma \ref{lem:unifcontrolVgeo}.

\begin{proof}[Proof of Lemma \ref{lem:unifcontrolVgeo}]
The proof combines a discretisation argument together with tail estimates on the random variables $V_\rho(x,y,r)$.
Let $\Delta=2(1+\alpha)/(1-\alpha)$ be the diameter of $(A)_{\alpha/(1-\alpha)}$, and set ${r_n=\alpha^n\cdot\Delta}$ for all $n\in\mathbb{N}$.
For each $n\in\mathbb{N}$, let $\left(B(w,r_n/2)\,;\,w\in W_n\right)$ be a covering of $(A)_{\alpha/(1-\alpha)}$ by balls of radius $r_n/2$, with centres $w\in(A)_{\alpha/(1-\alpha)}$ at least $r_n/2$ apart from each other.
By a measure argument, we have $\#W_n\leq C\cdot\alpha^{-dn}$ for some constant $C$.
Now, we claim that for every $x\neq y\in(A)_{\alpha/(1-\alpha)}$, we have
\begin{equation}\label{eq:discretisationgeo}
V_\rho(x,y,\alpha|x-y|)\geq\min\left\{V_\rho\left(w,w',\frac{r_{n+1}}{2}\right)\,;\,w\neq w'\in W_{n+1}:|w-w'|\leq\left(1+\alpha^2\right)\cdot r_{n-1}\right\},
\end{equation}
where $n\in\mathbb{N}^*$ is such that $r_n\leq|x-y|\leq r_{n-1}$.
Indeed, let ${w,w'\in W_{n+1}}$ be such that ${x\in B(w,r_{n+1}/2)}$ and ${y\in B(w',r_{n+1}/2)}$.
By the $1$-Lipschitz continuity of $|\cdot|$, we have
\[||w-w'|-|x-y||\leq|w-x|+|w'-y|\leq\frac{r_{n+1}}{2}+\frac{r_{n+1}}{2}=\alpha^2\cdot r_{n-1}.\]
In particular, it follows that $w\neq w'$, and $|w-w'|\leq\left(1+\alpha^2\right)\cdot r_{n-1}$.
Next, by the triangle inequality, we have ${\overline{B}(w,r_{n+1}/2)\subset\overline{B}(x,r_{n+1})}$ and ${\overline{B}(w',r_{n+1}/2)\subset\overline{B}(y,r_{n+1})}$.
Since ${r_{n+1}=\alpha\cdot r_n\leq\alpha|x-y|}$, it follows that $V_\rho(x,y,\alpha|x-y|)\geq V_\rho(w,w',r_{n+1}/2)$, which completes the proof of the claim.
Now, let us introduce the random variables
\[\zeta_n=\frac{n^{1/(\beta-1)}}{r_{n-1}^{(d-1)/(\beta-1)}}\cdot\min\left\{V_\rho\left(w,w',\frac{r_{n+1}}{2}\right)\,;\,w\neq w'\in W_{n+1}:|w-w'|\leq\left(1+\alpha^2\right)\cdot r_{n-1}\right\},\]
and set $\zeta_*=\inf_{n\geq1}\zeta_n$.
Note that these random variables are measurable with respect to the restriction of $\Pi_\rho$ to the set of roads that pass through $(A)_{\alpha/(1-\alpha)+r_2/2}$.
Moreover, we claim that for every $x\neq y\in(A)_{\alpha/(1-\alpha)}$, we have
\begin{equation}\label{eq:unifcontrolVrho}
V_\rho(x,y,\alpha|x-y|)\geq c\cdot\zeta_*\cdot|x-y|^{(d-1)/(\beta-1)}\cdot\ln\left(\frac{\Delta}{\alpha|x-y|}\right)^{-1/(\beta-1)},
\end{equation}
where $c=\ln(1/\alpha)^{1/(\beta-1)}$.
Indeed, given $x\neq y\in(A)_{\alpha/(1-\alpha)}$, fix an integer $n\in\mathbb{N}^*$ such that ${r_n\leq|x-y|\leq r_{n-1}}$.
By \eqref{eq:discretisationgeo}, we have
\[V_\rho(x,y,\alpha|x-y|)\geq\frac{r_{n-1}^{(d-1)/(\beta-1)}}{n^{1/(\beta-1)}}\cdot\zeta_n\geq\frac{|x-y|^{(d-1)/(\beta-1)}}{\log_{1/\alpha}(\Delta/(\alpha|x-y|))^{1/(\beta-1)}}\cdot\zeta_*,\]
and \eqref{eq:unifcontrolVrho} readily follows.
From now on, we assume that $\alpha$ is small enough so that
\[1+\frac{\alpha}{1-\alpha}+\frac{r_2}{2}=\frac{1+\alpha^2\cdot(1+\alpha)}{1-\alpha}\leq2\quad\text{and}\quad\Delta=\frac{2(1+\alpha)}{1-\alpha}\leq4.\]
It follows that the random variables $\zeta_n$ and $\zeta$ are measurable with the restriction of $\Pi_\rho$ to the set of roads that pass through $\overline{B}(0,2)$, and that
\[V_\rho(x,y,\alpha|x-y|)\geq c\cdot\zeta_*\cdot|x-y|^{(d-1)/(\beta-1)}\cdot\ln\left(\frac{4}{\alpha|x-y|}\right)^{-1/(\beta-1)}\quad\text{for all $x\neq y\in(A)_{\alpha/(1-\alpha)}$.}\]
To complete the proof of the lemma, it remains to show that $\alpha$ and $\rho$ can be adjusted in order to obtain the desired tail estimate for $\zeta=c\cdot\zeta_*$.
It suffices to show that there exists constants $C'=C'(\alpha)$ and $c'=c'(\alpha)$ such that
\[\mathbb{P}\left(\zeta_*<\frac{1}{t}\right)\leq C'\cdot\exp\left[-c'\cdot t^{\beta-1}\right]\quad\text{for all sufficiently large $t$.}\]
Fix $t\in[2,\infty[$.
By the union bound, we have
\[\mathbb{P}\left(\zeta_*<\frac{1}{t}\right)\leq\sum_{n\geq1}\mathbb{P}\left(\zeta_n<\frac{1}{t}\right).\]
Now, recall the definition of $\zeta_n$: the quantity $\mathbb{P}(\zeta_n<1/t)$ is the probability that there exists $w\neq w'\in W_{n+1}$ with $|w-w'|\leq\left(1+\alpha^2\right)\cdot r_{n-1}$ such that 
\[V_\rho\left(w,w',\frac{r_{n+1}}{2}\right)<\frac{1}{t}\cdot\frac{r_{n-1}^{(d-1)/(\beta-1)}}{n^{1/(\beta-1)}}=:v_n.\]
For fixed $w,w'$ as above, the quantity $\mathbb{P}(V_\rho(w,w',r_{n+1}/2)<v_n)$ is the probability that there is no road of $\Pi_\rho$ with speed at least $v_n$ that passes through $\overline{B}(w,r_{n+1}/2)$ and $\overline{B}(w',r_{n+1}/2)$.
Therefore, we have $\mathbb{P}(V_\rho(w,w',r_{n+1}/2)<v_n)=e^{-\lambda}$, where
\begin{equation}\label{eq:parameterlambda}
\lambda=\upsilon_{d-1}^{-1}\cdot\mu\left(\left\langle\overline{B}\left(w,\frac{r_{n+1}}{2}\right)\,;\,\overline{B}\left(w',\frac{r_{n+1}}{2}\right)\right\rangle\middle\backslash\overline{B}(z,\rho)\right)\cdot(\beta-1)\cdot\int_{v_n}^1v^{-\beta}\mathrm{d}v.
\end{equation}
The constants $\upsilon_{d-1}^{-1}$ and $(\beta-1)$ come from the precise definition of the intensity measure of $\Pi$, see \eqref{eq:normgeo} for a reminder.
Since $v_n\leq1/t\leq1/2$, we have
\[(\beta-1)\cdot\int_{v_n}^1v^{-\beta}\mathrm{d}v=v_n^{-(\beta-1)}-1\geq v_n^{-(\beta-1)}-(2v_n)^{-(\beta-1)}=\left(1-2^{-(\beta-1)}\right)\cdot v_n^{-(\beta-1)}.\]
Now, recall the estimate \eqref{eq:twopointmugeo}.
On the one hand, we have
\[\begin{split}
\mu\left\langle\overline{B}\left(w,\frac{r_{n+1}}{2}\right)\,;\,\overline{B}\left(w',\frac{r_{n+1}}{2}\right)\right\rangle&\geq c\cdot\frac{(r_{n+1}/2)^{d-1}\cdot(r_{n+1}/2)^{d-1}}{|w-w'|^{d-1}}\\
&\geq c\cdot\frac{(r_{n+1}/2)^{d-1}\cdot(r_{n+1}/2)^{d-1}}{\left(\left(1+\alpha^2\right)\cdot r_{n-1}\right)^{d-1}}\\
&=c\cdot\frac{\left(\alpha^2\middle/2\right)^{d-1}\cdot\left(\alpha^2\middle/2\right)^{d-1}}{\left(1+\alpha^2\right)^{d-1}}\cdot r_{n-1}^{d-1}=:c'\cdot r_{n-1}^{d-1}.
\end{split}\]
On the other hand, we have
\[\begin{split}
\mu\left\langle\overline{B}\left(w,\frac{r_{n+1}}{2}\right)\,;\,\overline{B}\left(w',\frac{r_{n+1}}{2}\right)\,;\,\overline{B}(z,\rho)\right\rangle&\leq\mu\left\langle\overline{B}\left(w,\frac{r_{n+1}}{2}\right)\,;\,\overline{B}(z,\rho)\right\rangle\\
&\leq C\cdot\frac{(r_{n+1}/2)^{d-1}\cdot\rho^{d-1}}{|w-z|^{d-1}}\\
&=C\cdot\frac{\left(\alpha^2\middle/2\right)^{d-1}\cdot\rho^{d-1}}{|w-z|^{d-1}}\cdot r_{n-1}^{d-1}.
\end{split}\]
Now, we want to adjust $\alpha$ and $\rho$ so as to obtain
\[\mu\left(\left\langle\overline{B}\left(w,\frac{r_{n+1}}{2}\right)\,;\,\overline{B}\left(w',\frac{r_{n+1}}{2}\right)\right\rangle\middle\backslash\overline{B}(z,\rho)\right)\geq\frac{c'}{2}\cdot r_{n-1}^{d-1}\]
uniformly in $w,w'$ as above.
It suffices to ensure that
\[C\cdot\frac{\left(\alpha^2\middle/2\right)^{d-1}\cdot\rho^{d-1}}{|w-z|^{d-1}}\leq\frac{c'}{2}\]
uniformly in $w\in(A)_{\alpha/(1-\alpha)}$.
First, we assume that $\alpha$ is small enough so that
\[\frac{\alpha}{1-\alpha}\leq\frac{r}{2}.\]
This way, we have $|w-z|\geq r/2$ for all $w\in(A)_{\alpha/(1-\alpha)}$.
Now that $\alpha$ has been fixed, we choose $\rho$ small enough so that
\[C\cdot\frac{\left(\alpha^2\middle/2\right)^{d-1}\cdot\rho^{d-1}}{(r/2)^{d-1}}\leq\frac{c'}{2}.\]
We can finally come back to our tail estimate: for the parameter $\lambda$ of \eqref{eq:parameterlambda}, we obtain the bound
\[\lambda\geq\upsilon_{d-1}^{-1}\cdot\frac{c'}{2}\cdot r_{n-1}^{d-1}\cdot\left(1-2^{-(\beta-1)}\right)\cdot v_n^{-(\beta-1)}=\upsilon_{d-1}^{-1}\cdot\frac{c'}{2}\cdot\left(1-2^{-(\beta-1)}\right)\cdot t^{\beta-1}\cdot n=:c''\cdot t^{\beta-1}\cdot n,\]
uniformly in $w,w'$ as above.
By the union bound, we deduce that
\[\mathbb{P}\left(\zeta_n<\frac{1}{t}\right)\leq\#W_{n+1}^2\cdot\exp\left[-c''\cdot t^{\beta-1}\cdot n\right].\]
Finally, recall that $\#W_{n+1}\leq C\cdot\alpha^{-d(n+1)}$, hence
\[\mathbb{P}\left(\zeta_n<\frac{1}{t}\right)\leq C^2\cdot\alpha^{-2d(n+1)}\cdot\exp\left[-c''\cdot t^{\beta-1}\cdot n\right]=C^2\cdot\exp\left[2d\ln(1/\alpha)\cdot(n+1)-c''\cdot t^{\beta-1}\cdot n\right].\]
It follows that
\[\mathbb{P}\left(\zeta_n<\frac{1}{t}\right)\leq C^2\cdot\exp\left[-c''/2\cdot t^{\beta-1}\cdot n\right]\quad\text{for all sufficiently large $t$,}\]
uniformly in $n\in\mathbb{N}^*$.
Summing this over $n$, we obtain the desired tail estimate: we get
\[\mathbb{P}\left(\zeta_*<\frac{1}{t}\right)\leq\sum_{n\geq1}C^2\cdot\exp\left[-c''/2\cdot t^{\beta-1}\cdot n\right]=C^2\cdot\frac{\exp\left[-c''/2\cdot t^{\beta-1}\right]}{1-\exp\left[-c''/2\cdot t^{\beta-1}\right]}\]
for all sufficiently large $t$.
\end{proof}
\end{proof}

\bibliographystyle{siam}
\bibliography{biblio.bib}

\end{document}